\documentclass{amsart}

\usepackage[foot]{amsaddr}

\usepackage{subfig}
\usepackage{graphicx}
\usepackage{amssymb,amsfonts,amsrefs}
\usepackage{hyperref}
\usepackage{tikz-cd}
\usepackage{enumerate}
\usepackage{array}
\usepackage{changes}

\usepackage[all,cmtip]{xy}

\usepackage{bbm}
\usepackage{tikz-cd}
\usepackage{mathtools}
\usetikzlibrary{calc}

\usepackage{comment}
\usepackage{mathcommands}

%--------Theorem Environments--------

\makeatletter
\newtheorem*{rep@theorem}{\rep@title}
\newcommand{\newreptheorem}[2]{%
\newenvironment{rep#1}[1]{%
 \def\rep@title{#2 \ref{##1}}%
 \begin{rep@theorem}}%
 {\end{rep@theorem}}}
\makeatother

%theoremstyle{plain} --- default
\newtheorem{thm}{Theorem}[section]
\newreptheorem{thm}{Theorem}
\newtheorem*{thm*}{Theorem}
\newtheorem{cor}[thm]{Corollary}
\newtheorem{prop}[thm]{Proposition}
\newtheorem{lem}[thm]{Lemma}
\newtheorem{conj}[thm]{Conjecture}
\newtheorem*{conj*}{Conjecture}
\newtheorem{quest}[thm]{Question}

\newtheorem{assumpt}[thm]{Assumption}

\newtheorem{mainthm}{Theorem}

\theoremstyle{definition}
\newtheorem{defn}[thm]{Definition}

\newtheorem{con}[thm]{Construction}
\newtheorem{exmp}[thm]{Example}

\theoremstyle{remark}
\newtheorem{rem}[thm]{Remark}

\newcommand{\rmH}{\mathrm{H}}

%\makeatletter
%\let\c@equation\c@thm
%\makeatother
\numberwithin{equation}{section}

\bibliographystyle{plain}

%--------Meta Data: Fill in your info------
\title{On the  Bogomolov-Positselski Conjecture
}

\author{Julian Feuerpfeil}

\address{Dipartimento di Matematica e Applicazioni, Università di Milano-Bicocca}

\address{Femto-St, Université Marie et Louis Pasteur}

\email{\href{mailto:j.feuerpfeil@campus.unimib.it}{j.feuerpfeil@campus.unimib.it}}

\begin{document}

% ABSTRACT
\begin{abstract} 
Let $p$ be a prime. An oriented pro-$p$ group $(G,\theta)$ is said to have the Bogomolov--Positselski property if it is Kummerian and if $I_\theta(G)$ is a free pro-$p$ group. In this paper, we provide a new criterion for an oriented pro-$p$ group to satisfy the Bogomolov--Positselski property. This criterion builds on earlier work of Positselski~\cite{Positselski2005} and Quadrelli--Weigel~\cite{QuadrelliWeigel2022}, relates their approaches, and answers a question raised in~\cite{QuadrelliWeigel2022}.

Under additional assumptions, we obtain two further sufficient criteria. The first is analogous to a Merkurjev--Suslin type statement. The second allows one to weaken the hypotheses appearing in Positselski's criterion~\cite{Positselski2005}*{Theorem~2}. Finally, we show that the stronger conditions are satisfied by pro-$p$ groups of elementary type. As a consequence, the Elementary Type Conjecture implies Positselski's \lq\lq Module Koszulity Conjecture~1\rq\rq~\cite{Positselski2014} for fields with finitely generated maximal pro-$p$ Galois group.

\noindent \textbf{Keywords. } Bogomolov-Positselski Conjecture, oriented pro-$p$ groups, Koszul algebras, Elementary Type Conjecture

\noindent \textbf{2020 Math. Subject Class.} Primary 16S37 secondary 12F10, 20J06, 12G05

\noindent \textbf{ORCID:} \ \href{https://orcid.org/0009-0000-0148-3348}{0009-0000-0148-3348}

\end{abstract}

\maketitle

\section{Introduction}
\subsection{Oriented pro-\texorpdfstring{$p$}{} groups and maximal pro-\texorpdfstring{$p$}{} Galois groups}
Let $p$ be a prime. A $p$-\emph{oriented profinite group} is a pair $(G,\theta)$ consiting of a profinite group $G$ and continuous homomorphism $\theta:G\to \ZZ_p^\times$. They have been introduced by I. Efrat for pro-$p$ groups in \cite{Efrat1998} under the name \emph{cyclotomic pro-$p$ pair}. In contrast, we call a $p$-oriented profinite group, whose underlying profinite group is pro-$p$, an \emph{oriented pro-$p$ group}, as done by Quadrelli and Weigel in \cite{QuadrelliWeigel2022}.

The definition is motivated by a setting in Galois theory. Let $\mathbb{K}$ be a field of characteristic $\neq p$, denote by $\mathbb{K}^s$ a separable closure of $\mathbb{K}$ and $G_\mathbb{K}:=\Gal(\mathbb{K}^s/\mathbb{K})$ the absolute Galois group of $\mathbb{K}$. The profinite group $G_\mathbb{K}$ acts continuously on the discrete group $\smash{\mu_{p^\infty}(\mathbb{K}^s)}\cong \QQ_p/\ZZ_p$. This action defines a continuous homomorphism 
\begin{align}
\theta_{\KK}:G_{\mathbb{K}}\to \Aut(\mu_{p^\infty}(\mathbb{K}^s))\cong \ZZ_p^\times
\end{align}
and the pair $(G_\mathbb{K},\theta_{\KK})$ is a $p$-oriented profinite group. If $\mathbb{K}$ contains a primitive $p^{\text{th}}$ root of unity, then $\theta_{\KK}$ factors through the maximal pro-$p$ quotient $G_\mathbb{K}(p):=G_\mathbb{K}/O^p(G_\mathbb{K})$, where $O^p(G_\mathbb{K})$ is the normal subgroup generated by all $p'$-Sylow subgroups of $G_\mathbb{K}$. In Galois-theoretic terms, this quotient corresponds to the maximal pro-$p$ Galois group of $\mathbb{K}$. We also denote the induced orientation on $G_\mathbb{K}(p)$ by $\theta_{\KK}$.

An oriented pro-$p$ group $(G,\theta)$ is called \emph{torsion-free}, $p$ is odd or $p=2$ and $\theta(G)\subseteq 1+4\ZZ_2$. In this case $\theta(G)$ is isomorphic to $\ZZ_p$ or trivial. Notice that this does not imply that $G$ itself is torsion-free as a pro-$p$ group. The oriented pro-$p$ group $(G_\mathbb{K}(p),\theta_{\KK})$ is torsion-free if and only if $p$ is odd or $p=2$ and $\sqrt{-1}\in \mathbb{K}$.

An oriented pro-$p$ group $(G,\theta)$ contains apart from $\ker \theta$ the following distinguished closed subgroups:
\begin{align*}
    K_\theta(G)&:=\langle h^{-\theta(g)}ghg^{-1}:g\in G, h\in \ker \theta\rangle_{cl}\\
    I_\theta(G)&:=\langle h\in \ker \theta:h^{p^n}\in K_\theta(G)\text{ for some }n\in \NN_0\rangle_{cl}
\end{align*}
The normal subgroup $K_\theta(G)$ was introduced by Quadrelli and Efrat in \cite{EfratQuadrelli2019} and is an analogue of the commutator subgroup for oriented pro-$p$ groups (see \cite{QuadrelliWeigel2022}). The subgroup $I_\theta(G)$ is normal and the \emph{isolator} of $K_\theta(G)$ in $G$. If $G$ is clear from the context, we occasionally simply write $K_\theta$ and $I_\theta$. The quotient $G(\theta):=G/I_\theta(G)$ is the maximal $\theta$-abelian quotient of $G$ (see \cite{QuadrelliWeigel2022}*{Section 2}). If $\theta$ is trivial, then $G(\theta)$ is the maximal torsion-free quotient of $G^{\rm ab}$.

An oriented pro-$p$ group $(G,\theta)$ is called \emph{Kummerian} if $\ker(\theta)/K_\theta(G)$ is a free abelian pro-$p$ group. This notion was introduced by Efrat and Quadrelli in \cite{EfratQuadrelli2019}*{Definition 3.4} and has proven to be a powerful tool to exclude oriented pro-$p$ groups as candidates for maximal pro-$p$ Galois groups with cyclotomic orientation (see for example \cite{EfratQuadrelli2019}*{Section 8}). There are many equivalent characterizations of the Kummerian property (see, for example, \cite{QuadrelliWeigel2022}*{Proposition 2.6}). One of them is that $(G,\theta)$ is Kummerian if and only if $K_\theta(G)=I_\theta(G)$.

\begin{thm*}[{\cite{EfratQuadrelli2019}*{Theorem 4.2}}]
    Let $\mathbb{K}$ be a field containing a primitve $p^{\text{th}}$ root of unity (and $\sqrt{-1}$ if $p=2$), then $(G_\mathbb{K}(p),\theta_{\KK})$ is a torsion-free, Kummerian oriented pro-$p$ group.
\end{thm*}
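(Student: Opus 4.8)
The plan is to verify the two defining conditions in turn. \emph{Torsion-freeness} is immediate: when $p$ is odd there is nothing to prove, and when $p=2$ the hypothesis $\sqrt{-1}\in\mathbb{K}$ gives $\mu_4\subseteq\mathbb{K}$, so $G_\mathbb{K}$ acts trivially on $\mu_4$ and hence $\theta_\mathbb{K}(g)\equiv 1\pmod 4$ for every $g$; thus $\theta_\mathbb{K}(G_\mathbb{K})\subseteq 1+4\ZZ_2$, which persists after passing to $G_\mathbb{K}(p)$. So the content is to show that $(G_\mathbb{K}(p),\theta_\mathbb{K})$ is \emph{Kummerian}, which by the characterisation recalled above amounts to $K_{\theta_\mathbb{K}}(G_\mathbb{K}(p))=I_{\theta_\mathbb{K}}(G_\mathbb{K}(p))$, i.e.\ to the torsion-freeness of $\ker\theta_\mathbb{K}/K_{\theta_\mathbb{K}}$.

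Write $G:=G_\mathbb{K}(p)$, $K:=K_{\theta_\mathbb{K}}(G)$ and $L:=\mathbb{K}(\mu_{p^\infty})$. By the hypotheses on $\mathbb{K}$ the group $\Gamma:=\Gal(L/\mathbb{K})=\theta_\mathbb{K}(G)$ is contained in $1+p\ZZ_p$ (in $1+4\ZZ_2$ if $p=2$), hence is a closed subgroup of a group isomorphic to $\ZZ_p$; so $\Gamma\cong\ZZ_p$ or $\Gamma=1$, and in particular $\operatorname{cd}_p(\Gamma)\le1$. As $L/\mathbb{K}$ is pro-$p$, $L$ lies in the maximal pro-$p$ extension $\mathbb{M}$ of $\mathbb{K}$, and $\ker\theta_\mathbb{K}=\Gal(\mathbb{M}/L)$ inside $G=\Gal(\mathbb{M}/\mathbb{K})$. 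Put $T:=\varprojlim_n\mu_{p^n}\cong\ZZ_p$. For $a\in\mathbb{K}^\times$ choose a compatible system $(\sqrt[p^n]{a})_n$ of roots in $\mathbb{M}$ and set $\kappa_a(\sigma):=(\sigma(\sqrt[p^n]{a})/\sqrt[p^n]{a})_n\in T$ for $\sigma\in\ker\theta_\mathbb{K}$; since $\sigma$ fixes $\mu_{p^\infty}$ this is independent of the choice of roots, and $\sigma\mapsto(a\mapsto\kappa_a(\sigma))$ defines a continuous homomorphism $\kappa\colon\ker\theta_\mathbb{K}\to\operatorname{Hom}_{\mathrm{cont}}(\mathbb{K}^\times,T)$ with kernel $\Gal(\mathbb{M}/F)$, where $F:=L(\sqrt[p^\infty]{\mathbb{K}^\times})$. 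Because $T\cong\ZZ_p$ is torsion-free, so is $\operatorname{Hom}(\mathbb{K}^\times,T)$; hence it suffices to prove that $\kappa$ induces an \emph{injection} on $\ker\theta_\mathbb{K}/K$, that is, that $K=\Gal(\mathbb{M}/F)$.

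The inclusion $K\subseteq\Gal(\mathbb{M}/F)$ is a short cocycle computation. For fixed $a\in\mathbb{K}^\times$ and $n$, the map $g\mapsto\alpha_g:=g(\sqrt[p^n]{a})/\sqrt[p^n]{a}$ is a $1$-cocycle of $G$ with values in $\mu_{p^n}$ (with the $\theta_\mathbb{K}$-action). For $h\in\ker\theta_\mathbb{K}$ one has $\theta_\mathbb{K}(ghg^{-1})=\theta_\mathbb{K}(h)=1$, so $ghg^{-1}$ and all powers of $h$ act trivially on $\mu_{p^n}$; the cocycle identity then gives $\alpha_{ghg^{-1}}=\alpha_h^{\theta_\mathbb{K}(g)}$ and hence $\alpha_{h^{-\theta_\mathbb{K}(g)}ghg^{-1}}=1$. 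Thus every generator of $K$ fixes each $\sqrt[p^n]{a}$ with $a\in\mathbb{K}^\times$, i.e.\ $K\subseteq\Gal(\mathbb{M}/F)$. The reverse inclusion $\Gal(\mathbb{M}/F)\subseteq K$ is the crux of the argument and I expect it to be the main obstacle.

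For this, pass to $\bar G:=G/K=\Gal(\mathbb{M}^{K}/\mathbb{K})$. First, using the generators of $K$ with $g\in\ker\theta_\mathbb{K}$ together with the normality of $K$ in $G$ one gets $[\ker\theta_\mathbb{K},\ker\theta_\mathbb{K}]\subseteq K$, so $\ker\bar\theta=\Gal(\mathbb{M}^{K}/L)$ is abelian; by Kummer theory over $L\ni\mu_{p^\infty}$, $\mathbb{M}^{K}\subseteq L(\sqrt[p^\infty]{L^\times})$, and writing the exponent-$p^n$ part of $\mathbb{M}^{K}/L$ as $L(\sqrt[p^n]{W_n})$ for its Kummer radical $W_n\le L^\times/(L^\times)^{p^n}$, it remains to identify $W_n$. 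Secondly, the defining relations of $K$ force $\bar g\bar h\bar g^{-1}=\bar h^{\theta_\mathbb{K}(\bar g)}$ for $\bar h\in\ker\bar\theta$, so $\Gamma$ acts on $\Gal(\mathbb{M}^{K}/L)$ by $h\mapsto h^{\theta_\mathbb{K}(\gamma)}$. Feeding this into the perfect, $\Gamma$-equivariant Kummer pairing $\Gal(L(\sqrt[p^n]{W_n})/L)\times W_n\to\mu_{p^n}$ — on which $\Gamma$ acts on $\mu_{p^n}$ through $\theta_\mathbb{K}$ — the two occurrences of $\theta_\mathbb{K}$ cancel and one finds that $\Gamma$ acts trivially on $W_n$, i.e.\ $W_n\subseteq(L^\times/(L^\times)^{p^n})^\Gamma$. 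Finally, from $1\to\mu_{p^n}\to L^\times\xrightarrow{x\mapsto x^{p^n}}(L^\times)^{p^n}\to1$, together with Hilbert 90 ($H^1(\Gamma,L^\times)=0$) and $H^2(\Gamma,\mu_{p^n})=0$ (valid since $\operatorname{cd}_p(\Gamma)\le1$ and $\mu_{p^n}$ is $p$-primary), one gets $H^1(\Gamma,(L^\times)^{p^n})=0$, whence the sequence $1\to(L^\times)^{p^n}\to L^\times\to L^\times/(L^\times)^{p^n}\to1$ yields $(L^\times/(L^\times)^{p^n})^\Gamma=\operatorname{im}(\mathbb{K}^\times\to L^\times/(L^\times)^{p^n})$. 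As $W_n$ also contains $\operatorname{im}(\mathbb{K}^\times)$ (because $F\subseteq\mathbb{M}^{K}$ by the previous paragraph), we conclude $W_n=\operatorname{im}(\mathbb{K}^\times)$ for all $n$, hence $\mathbb{M}^{K}=F$ and $K=\Gal(\mathbb{M}/F)=\ker\kappa$. Consequently $\ker\theta_\mathbb{K}/K$ embeds into the torsion-free group $\operatorname{Hom}(\mathbb{K}^\times,T)$, so it is torsion-free; that is, $(G_\mathbb{K}(p),\theta_\mathbb{K})$ is Kummerian. The delicate point throughout is the exact determination of the fixed field $\mathbb{M}^{K}$, where the hypothesis $\mu_p\subseteq\mathbb{K}$ (which forces $\operatorname{cd}_p(\Gamma)\le1$) is essential.
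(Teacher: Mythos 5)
Your argument is correct, and it is worth noting that the paper itself does not prove this statement but imports it from Efrat--Quadrelli, so the comparison is with their proof rather than with anything in the text above. Their route is cohomological: they first characterise the Kummerian property by the surjectivity of the natural maps $\rmH^1(G,\ZZ/p^n(\theta))\to \rmH^1(G,\FF_p)$ for all $n$, and then observe that for $G=G_{\mathbb{K}}(p)$ these maps are, via Kummer theory, the obvious surjections $\mathbb{K}^\times/(\mathbb{K}^\times)^{p^n}\to \mathbb{K}^\times/(\mathbb{K}^\times)^{p}$; this is shorter but hides the field theory. Your proof instead identifies the fixed field of $K_{\theta}$ directly as $\mathbb{K}(\mu_{p^\infty},\sqrt[p^\infty]{\mathbb{K}^\times})$ and embeds $\ker\theta_{\mathbb{K}}/K_{\theta}$ into $\Hom(\mathbb{K}^\times,\varprojlim\mu_{p^n})$; all the steps check out (the cocycle computation giving $K_\theta\subseteq\Gal(\mathbb{M}/F)$, the $\Gamma$-equivariance of the Kummer pairing forcing $W_n$ into the $\Gamma$-invariants, and the vanishing $\rmH^1(\Gamma,(L^\times)^{p^n})=0$ from Hilbert 90 together with $\mathrm{cd}_p(\Gamma)\le 1$, which is exactly where the hypothesis $\sqrt{-1}\in\mathbb{K}$ enters for $p=2$). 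This buys you more than the citation does: the equality $\mathbb{M}^{K_\theta}=\mathbb{K}(\mu_{p^\infty},\sqrt[p^\infty]{\mathbb{K}^\times})$ is precisely what justifies the reformulation of the Bogomolov--Positselski conjecture in terms of $\sqrt[p^\infty]{\mathbb{K}}$ stated in the introduction. Two small points of care: you silently pass from ``torsion-free'' to ``free abelian pro-$p$'', which is legitimate here only because of the cited equivalence $K_\theta=I_\theta\Leftrightarrow$ Kummerian (Proposition~2.6 of Quadrelli--Weigel); and one should note, as you implicitly do, that $\mu_{p^\infty}$ and all the roots $\sqrt[p^n]{a}$ really do lie in the maximal pro-$p$ extension $\mathbb{M}$, which uses $\mu_p\subseteq\mathbb{K}$.
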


Most of the statements in this paper are only concerned (and only true) for torsion-free oriented pro-$p$ groups. To ensure the validity in the Galois theoretic context, we make the following standing assumption:
\begin{assumpt}
    \label{ass:roots_of_unity}
    The field $\mathbb{K}$ contains a primitive $p^{\text{th}}$ root of unity and $\sqrt{-1}$ if $p=2$.
\end{assumpt}

The following conjecture was first stated by Bogomolov in \cite{Bogomolov1995} for fields containing an algebraically closed subfield and later refined by Positselski in \cite{Positselski2005} to fields statisfying \ref{ass:roots_of_unity}:
\begin{conj*}[Bogomolov--Positselski]%[Bogomolov  \cite{bogomolov_fedor_a_structure_1995}, Positselski  \cite{Positselski2005}]
\label{conj:BogomolovPositselski}
    Let $\mathbb{K}$ be a field satisfying \ref{ass:roots_of_unity}, then the group $\smash{K_{\theta_{\KK}}(G_\mathbb{K}(p))}$ is a free pro-$p$ group. Equivalently, the maximal pro-$p$ Galois group of 
    \begin{align*}
        \sqrt[p^\infty]{\mathbb{K}}:=\mathbb{K}(\sqrt[p^n]{a}:n\in \NN,a\in \mathbb{K})
    \end{align*} is a free pro-$p$ group.
\end{conj*}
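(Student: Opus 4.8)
The plan is to dispose first of the genuinely provable content — the equivalence of the two formulations — and then to isolate the freeness assertion, which is where the real difficulty lies.

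\emph{The equivalence.} I would begin by identifying the subgroup $K_{\theta_\KK}(G_\KK(p))$ with the maximal pro-$p$ Galois group of $\sqrt[p^\infty]{\KK}$. Under Assumption~\ref{ass:roots_of_unity} the cited theorem of Efrat--Quadrelli gives that $(G_\KK(p),\theta_\KK)$ is Kummerian, so by the characterization recalled in the introduction one has $K_{\theta_\KK}(G_\KK(p)) = I_{\theta_\KK}(G_\KK(p))$. It therefore suffices to identify the isolator with $G_{\sqrt[p^\infty]{\KK}}(p)$. For this I would show that the fixed field of $I_{\theta_\KK}(G_\KK(p))$ inside $\KK(p)$ is exactly $\sqrt[p^\infty]{\KK}$: the quotient $G_\KK(p)(\theta_\KK)=G_\KK(p)/I_{\theta_\KK}$ is the maximal $\theta$-abelian quotient (see \cite{QuadrelliWeigel2022}*{Section 2}), and since $\KK$ already contains the relevant roots of unity, Kummer theory identifies the maximal $\theta$-abelian extension of $\KK$ with the compositum of all $\KK(\sqrt[p^n]{a})$, i.e. with $\sqrt[p^\infty]{\KK}$. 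Finally I would check that $\KK(p)$ is itself the maximal pro-$p$ extension of $\sqrt[p^\infty]{\KK}$: because $\sqrt[p^\infty]{\KK}/\KK$ is normal, the maximal pro-$p$ extension $N$ of $\sqrt[p^\infty]{\KK}$ is stable under the $\KK$-conjugation that permutes it, hence normal over $\KK$; then $\Gal(N/\KK)$ is an extension of pro-$p$ by pro-$p$, so $N\subseteq\KK(p)$, and the reverse inclusion is immediate. Thus $I_{\theta_\KK}(G_\KK(p))=\Gal(\KK(p)/\sqrt[p^\infty]{\KK})=G_{\sqrt[p^\infty]{\KK}}(p)$, and the two formulations are literally the same assertion.

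\emph{The freeness.} With the equivalence settled, proving the conjecture reduces to showing that the pro-$p$ group $F:=I_{\theta_\KK}(G_\KK(p))$ is free, equivalently that $\mathrm{cd}_p(F)\le 1$, equivalently that $\rmH^2(F,\mathbb{F}_p)=0$. The natural route is to control $\rmH^\bullet(F,\mathbb{F}_p)$ through the extension $1\to F\to G_\KK(p)\to G_\KK(p)(\theta_\KK)\to 1$ and the associated Hochschild--Serre spectral sequence, feeding in the structural input on $\rmH^\bullet(G_\KK(p),\mathbb{F}_p)$ supplied by the norm residue (Bloch--Kato) theorem, namely that this algebra is quadratic and generated in degree one. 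Concretely I would try to show that every class in $\rmH^2(F,\mathbb{F}_p)$ either is restricted from $G_\KK(p)$, where it is a sum of products of degree-one classes and hence dies on the isolator (on which the $\theta$-part of the cohomology becomes trivial), or is accounted for by a transgression that vanishes for the same structural reason.

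\emph{The main obstacle.} This is exactly where the problem ceases to be formal: the vanishing of $\rmH^2(F,\mathbb{F}_p)$ does not follow from quadraticity alone but requires the finer \emph{Koszulity} of the Galois cohomology algebra in the sense of Positselski \cite{Positselski2005}, together with precise control over how the relation module of $G_\KK(p)$ restricts to $F$. I do not expect to establish this vanishing unconditionally — indeed doing so \emph{is} the content of the Bogomolov--Positselski conjecture, which remains open. The realistic outcome of the plan is therefore not a proof but a reduction: a criterion on $(G_\KK(p),\theta_\KK)$, of Merkurjev--Suslin or Koszulity type, under which $\rmH^2(F,\mathbb{F}_p)=0$ is forced, together with a verification of that criterion for the structurally understood groups (those of elementary type), rather than a resolution of the conjecture in its full generality.
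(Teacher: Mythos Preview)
The statement in question is a \emph{conjecture}, not a theorem: the paper does not prove it and does not claim to. Your proposal correctly recognises this. You separate the assertion into the equivalence of the two formulations, which you argue for, and the freeness claim, which you explicitly acknowledge is the open content of the conjecture.

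On the equivalence: the paper simply asserts ``Equivalently'' without justification, so there is nothing to compare against. Your argument via Kummerianity (giving $K_{\theta_\KK}=I_{\theta_\KK}$), the identification of the fixed field of $I_{\theta_\KK}$ with $\sqrt[p^\infty]{\KK}$ through the maximal $\theta$-abelian quotient and Kummer theory, and the check that $\KK(p)$ is the maximal pro-$p$ extension of $\sqrt[p^\infty]{\KK}$, is a correct and standard way to fill this in.

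On the freeness: your concluding paragraph is an accurate summary of what the paper actually achieves --- not a proof of the conjecture, but reduction to cohomological criteria (Theorems~\ref{thm:TorSES} and~\ref{thm:WeakKoszulBPC}) via the Hochschild--Serre spectral sequence, together with verification for groups of elementary type (Theorem~\ref{thm:Elementary Type satisfies Module Koszulity}). So there is no gap in your proposal; you have correctly identified that a full proof is not available and sketched the shape of the partial results the paper provides.
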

Motivated by this Conjecture, we say that a Kummerian oriented pro-$p$ group $(G,\theta)$ has the \emph{Bogomolov--Positselski property} if $K_\theta(G)$ is a free pro-$p$ group. Similarly, a field $\mathbb{K}$ satisfying \ref{ass:roots_of_unity} has the \emph{Bogomolov--Positselski property} if $(G_\mathbb{K}(p),\theta_{\KK})$ has the Bogomolov--Positselski property.

A pro-$p$ group $G$ is called $\rmH^\bullet$-\emph{quadratic} if its $\FF_p$-cohomology algebra $\rmH^\bullet(G,\FF_p)=\bigoplus_i \rmH^i(G,\FF_p)$ is a quadratic algebra with respect to the cup product, that is, it is generated as algebra by its elements in degree $1$ and all relations are in degree $2$. For a more precise definition, we refer to Section~\ref{sec:QuadraticAndKoszulAlgebras}.

The following theorem is a consequence of the norm residue isomorphism theorem proven by Rost and Voevodsky with a patch by Weibel (cf.  \cites{Voevodsky2011,Weibel2008,Weibel2009}):
\begin{thm*}
    \label{thm:NormResidueIso}
    Let $\mathbb{K}$ be a field containing a primitive $p^{\text{th}}$ root of unity, then $G_\mathbb{K}(p)$ is $\rmH^\bullet$-quadratic. %More precisely $\rmH^n(G_\mathbb{K},\FF_p)\cong \rmH^n(G_\mathbb{K}(p),\FF_p)\cong K^M_n(\mathbb{K})\otimes \FF_p$, where $K^M_n(\mathbb{K})$ is the $n^{\text{th}}$ Milnor $K$-group of $\mathbb{K}$, which will be defined in section~\ref{sec:GaloisCohomology}.
\end{thm*}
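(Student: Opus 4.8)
The plan is to treat the norm residue isomorphism theorem as a black box, reinterpret it as a statement about the graded $\FF_p$-cohomology ring of $G_\mathbb{K}$, and then descend along the projection $G_\mathbb{K}\twoheadrightarrow G_\mathbb{K}(p)$. Fix a separable closure $\mathbb{K}^s/\mathbb{K}$, so $G_\mathbb{K}=\Gal(\mathbb{K}^s/\mathbb{K})$. The norm residue isomorphism theorem of Rost and Voevodsky (with Weibel's patch) asserts that the Galois symbol
\[
K^M_\bullet(\mathbb{K})/p\longrightarrow\bigoplus_{n\ge 0}\rmH^n\bigl(G_\mathbb{K},\mu_p^{\otimes n}\bigr)
\]
is an isomorphism of graded $\FF_p$-algebras, where $K^M_\bullet(\mathbb{K})$ is the Milnor $K$-ring of $\mathbb{K}$. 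Since $\mathbb{K}$ contains a primitive $p$-th root of unity, choosing one trivializes $\mu_p\cong\FF_p$ as a $G_\mathbb{K}$-module; hence $\mu_p^{\otimes n}\cong\FF_p$ for every $n$ compatibly with cup products, and the right-hand side is isomorphic to $\rmH^\bullet(G_\mathbb{K},\FF_p)$ as a graded ring. Thus $\rmH^\bullet(G_\mathbb{K},\FF_p)\cong K^M_\bullet(\mathbb{K})/p$.

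Next I would check that $K^M_\bullet(\mathbb{K})/p$ is a quadratic $\FF_p$-algebra in the sense of Section~\ref{sec:QuadraticAndKoszulAlgebras}. By definition $K^M_\bullet(\mathbb{K})=T_\ZZ(\mathbb{K}^\times)/\bigl(a\otimes(1-a):a\in\mathbb{K}\setminus\{0,1\}\bigr)$, and the Steinberg elements generating the defining ideal all lie in tensor degree $2$. Base change along $\ZZ\to\FF_p$ identifies $K^M_\bullet(\mathbb{K})/p$ with $T_{\FF_p}\bigl(\mathbb{K}^\times/(\mathbb{K}^\times)^p\bigr)$ modulo the two-sided ideal generated by the images of the Steinberg elements; so $K^M_\bullet(\mathbb{K})/p$ is generated in degree $1$ by $K^M_1(\mathbb{K})/p=\mathbb{K}^\times/(\mathbb{K}^\times)^p$ with defining relations concentrated in degree $2$, which is precisely quadraticity.

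It remains to descend to $G_\mathbb{K}(p)=G_\mathbb{K}/O^p(G_\mathbb{K})$. Put $N=O^p(G_\mathbb{K})=\Gal(\mathbb{K}^s/\mathbb{K}(p))$, where $\mathbb{K}(p)$ is the maximal pro-$p$ extension of $\mathbb{K}$. Since $\mathbb{K}(p)$ admits no nontrivial finite $p$-extension and contains $\mu_p$, Kummer theory gives $\mathbb{K}(p)^\times=(\mathbb{K}(p)^\times)^p$, hence $K^M_1(\mathbb{K}(p))/p=0$; as $K^M_\bullet(\mathbb{K}(p))/p$ is generated in degree $1$ it vanishes in all positive degrees, so the norm residue theorem applied over the field $\mathbb{K}(p)$ yields $\rmH^n(N,\FF_p)=0$ for all $n\ge 1$. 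In the Lyndon--Hochschild--Serre spectral sequence of $1\to N\to G_\mathbb{K}\to G_\mathbb{K}(p)\to 1$ only the row $t=0$ survives, so it collapses at $E_2$ and the inflation map $\rmH^\bullet(G_\mathbb{K}(p),\FF_p)\to\rmH^\bullet(G_\mathbb{K},\FF_p)$ is an isomorphism of graded $\FF_p$-algebras. Combining the three steps, $\rmH^\bullet(G_\mathbb{K}(p),\FF_p)\cong K^M_\bullet(\mathbb{K})/p$ is quadratic.

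The genuine content is entirely contained in the cited deep theorem, so that is the main obstacle; in the write-up the only points deserving a line of care are that reduction modulo $p$ does not introduce relations of degree $>2$ in Milnor $K$-theory, and the vanishing $\rmH^{\ge 1}(N,\FF_p)=0$, which follows formally from the maximality of $\mathbb{K}(p)$ together with the norm residue theorem over $\mathbb{K}(p)$.
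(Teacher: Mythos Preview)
Your argument is correct and follows the same route the paper sketches in Section~\ref{sec:GaloisCohomology}: cite the norm residue isomorphism, trivialize $\mu_p$ using the chosen primitive $p$-th root of unity to identify $\bigoplus_n\rmH^n(G_\mathbb{K},\mu_p^{\otimes n})$ with $\rmH^\bullet(G_\mathbb{K},\FF_p)$, observe that $K^M_\bullet(\mathbb{K})\otimes\FF_p$ is quadratic, and then use that the $\FF_p$-cohomology algebras of $G_\mathbb{K}$ and $G_\mathbb{K}(p)$ coincide. The paper merely asserts this last identification, whereas you supply a justification via the Hochschild--Serre spectral sequence and the vanishing of $\rmH^{\ge 1}(O^p(G_\mathbb{K}),\FF_p)$; your deduction of that vanishing from the norm residue theorem over $\mathbb{K}(p)$ is a nice touch.
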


For a torsion-free, Kummerian, oriented pro-$p$ group $(G,\theta)$, we consider the inflation map
\begin{align}
    \psi_G^\bullet:={\inf}_{G(\theta),G}^\bullet:\rmH^\bullet(G(\theta),\FF_p)\cong \Lambda^\bullet(\rmH^1(G,\FF_p))\to \rmH^\bullet(G,\FF_p),
\end{align}
which is surjective homomorphism of quadratic algebras if $G$ is $\rmH^\bullet$-quadratic. The isomorphism $\rmH^\bullet(G(\theta),\FF_p)\cong \Lambda^\bullet(\rmH^1(G,\FF_p))$ can be found in \cite{QuadrelliWeigel2022}*{Example 4.3} and is a consequence of Lazard's theorem. If $G$ is clear from the context, we only write $\psi$ instead of $\psi_G$.

The next theorem is due to Positselski and gives a criterion for the Bogomolov--Positselski property of a field $\KK$ in terms of properties of the kernel of $\psi^\bullet:=\psi_{G_\KK(p)}^\bullet$, whose proof works also in the purely group theoretic setting. 
\begin{thm*}[{\cite[Theorem 2]{Positselski2005}}]
\label{thm:Positelski_Koszul}
    Let $\mathbb{K}$ be a field satisfying \ref{ass:roots_of_unity}. If $(\ker \psi^\bullet)(2)$ is a Koszul module over the algebra $\Lambda^\bullet(\rmH^1(G_\mathbb{K},\FF_p))$, then $\mathbb{K}$ has the Bogomolov--Positselski property.
\end{thm*}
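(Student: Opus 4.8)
The plan is to argue group-theoretically, applied to $G:=G_\mathbb{K}(p)$, which is $\rmH^\bullet$-quadratic by the norm residue isomorphism theorem and torsion-free Kummerian by \cite{EfratQuadrelli2019}*{Theorem~4.2} (Assumption~\ref{ass:roots_of_unity} is used only to invoke these two facts). Write $N:=K_\theta(G)=I_\theta(G)$ --- the two coincide because $G$ is Kummerian --- and put $Q:=G(\theta)=G/N$, $V:=\rmH^1(G,\FF_p)$, $\Lambda:=\Lambda^\bullet(V)\cong\rmH^\bullet(Q,\FF_p)$ and $A:=\rmH^\bullet(G,\FF_p)$. By Serre's freeness criterion it suffices to prove $\rmH^n(N,\FF_p)=0$ for all $n\ge2$. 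First I would record the structural facts about $\psi:=\psi_G^\bullet\colon\Lambda\to A$: it is a surjective homomorphism of quadratic algebras which is an isomorphism in degrees $\le1$ (the degree-$1$ part being the isomorphism $\rmH^1(Q,\FF_p)\cong V$ recalled above), so its kernel $R:=\ker\psi$ is a graded ideal of $\Lambda$ generated in degree $2$, and $(\ker\psi^\bullet)(2)$ is precisely $R$ regraded so as to be generated in degree $0$; moreover $\Lambda$ itself is a Koszul algebra.

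The key idea is to compute $\rmH^\bullet(N,\FF_p)$ from $A$ and $\Lambda$ via the Eilenberg--Moore (change-of-rings) spectral sequence of the extension $1\to N\to G\to Q\to1$,
\[
E_2^{-s,t}=\mathrm{Tor}^{\Lambda}_{s}\!\bigl(A,\FF_p\bigr)_t\ \Longrightarrow\ \rmH^{\,t-s}(N,\FF_p),
\]
where $A$ is regarded as a graded $\Lambda$-module through $\psi$ and the subscript $t$ records the internal grading. I would construct this spectral sequence from the Koszul (Priddy) resolution of $\FF_p$ over the Koszul algebra $\Lambda$ (or from the bar construction on the DG-algebra of continuous cochains of $Q$), and establish convergence using that $Q$ is pro-$p$ and acts locally nilpotently on each $\rmH^t(N,\FF_p)$.

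Next I would locate the groups $\mathrm{Tor}^{\Lambda}_{s}(A,\FF_p)$ in the $(s,t)$-plane. From the short exact sequence $0\to R\to\Lambda\to A\to0$ of graded $\Lambda$-modules and $\mathrm{Tor}^{\Lambda}_{\ge1}(\Lambda,\FF_p)=0$ one gets $\mathrm{Tor}^{\Lambda}_{0}(A,\FF_p)=\FF_p$ in internal degree $0$, $\mathrm{Tor}^{\Lambda}_{1}(A,\FF_p)\cong R/\Lambda_+R$ in internal degree $2$, and $\mathrm{Tor}^{\Lambda}_{s}(A,\FF_p)\cong\mathrm{Tor}^{\Lambda}_{s-1}(R,\FF_p)$ for $s\ge2$. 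The hypothesis that $(\ker\psi^\bullet)(2)$ is a Koszul $\Lambda$-module says exactly that it admits a \emph{linear} minimal free resolution, i.e.\ $\mathrm{Tor}^{\Lambda}_{j}\!\bigl((\ker\psi^\bullet)(2),\FF_p\bigr)$ sits in internal degree $j$; regrading, $\mathrm{Tor}^{\Lambda}_{j}(R,\FF_p)$ sits in internal degree $j+2$. Combining, $\mathrm{Tor}^{\Lambda}_{s}(A,\FF_p)$ lies in internal degree $0$ for $s=0$ and in internal degree $s+1$ for $s\ge1$, so the total degree $t-s$ of any nonzero $E_2^{-s,t}$ is $0$ or $1$. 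Hence $E_2$, and therefore $E_\infty$, is concentrated in total degrees $0$ and $1$, giving $\rmH^n(N,\FF_p)=0$ for all $n\ge2$; then $N=K_\theta(G)$ is a free pro-$p$ group and $\mathbb{K}$ has the Bogomolov--Positselski property.

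I expect the main obstacle to be the middle step: producing the Eilenberg--Moore spectral sequence in the profinite/continuous category and proving its convergence --- where the pro-$p$ hypothesis and the local nilpotence of the $Q$-action on $\rmH^\bullet(N,\FF_p)$ are essential --- together with the index bookkeeping that turns ``$(\ker\psi^\bullet)(2)$ is Koszul'' into the off-by-one linear shape $t=s+1$ of $\mathrm{Tor}^{\Lambda}_{\ge1}(A,\FF_p)$. I would also flag where $\rmH^\bullet$-quadraticity of $G$ enters: it is exactly what makes $\psi$ surjective and forces $\ker\psi$ to be generated in degree $2$, so that the Koszul-module property of $(\ker\psi^\bullet)(2)$ is the relevant hypothesis and the spectral sequence is genuinely controlled; the remaining ingredients --- Serre's criterion, the long exact $\mathrm{Tor}$-sequence, and the degree count --- are routine.
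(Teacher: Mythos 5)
Your reduction of the statement to a degree count is fine (freeness of $N=K_\theta(G)$ needs only $\rmH^2(N,\FF_p)=0$, and Koszulity of $(\ker\psi^\bullet)(2)$ does force $\mathrm{Tor}^{\Lambda}_{s}(A,\FF_p)$ to sit in internal degree $s+1$ for $s\geq 1$), but the argument stands or falls on the middle step, and that step is a genuine gap rather than a technical formality. There is no general theorem providing a spectral sequence $\mathrm{Tor}^{\rmH^\bullet(Q,\FF_p)}_{s}(\rmH^\bullet(G,\FF_p),\FF_p)_t\Rightarrow \rmH^{t-s}(N,\FF_p)$ for an extension of pro-$p$ groups $1\to N\to G\to Q\to 1$: already for discrete groups the Eilenberg--Moore comparison between $\FF_p\otimes^{\mathbf{L}}_{C^\bullet(Q)}C^\bullet(G)$ and $C^\bullet(N)$ fails without nilpotence hypotheses (Dwyer's strong-convergence criterion asks for a \emph{finite} filtration of each $\rmH_i(N,\FF_p)$ with trivial $Q$-action on the quotients), whereas here the action is only locally unipotent on modules that are typically infinite-dimensional; on top of that you are in the profinite/continuous category, the second-quadrant filtration is infinite (your total-degree-$1$ line has contributions in every column), so conditional versus strong convergence must be addressed, and even the construction of the cobar-type $E_1$-term over continuous cochains has to be justified. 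Flagging this as ``the main obstacle'' and sketching ``bar construction plus local nilpotence'' does not close it: this fibre-comparison statement is precisely the hard content of Positselski's theorem, so as written your proof assumes something at least as strong as what is to be proved.

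For comparison, neither Positselski's original argument nor the route in this paper invokes any such global convergence theorem. Both work with the conilpotent group coalgebra $\FF_p(\!(G(\theta))\!)$ and the comodule $P=\rmH^1(K_\theta(G),\FF_p)$, control only low homological degrees of $M=\rmH^\bullet(G(\theta),P)$ by a filtration/associated-graded argument (Proposition~\ref{thm:conilpotent-coalgebras}, i.e.\ Positselski's Theorem~4 in \cite{Positselski2005}), and then conclude freeness from degree-$2$ information alone --- via Serre's criterion in \cite{Positselski2005}, or here via Theorem~\ref{thm:TorSES} together with the injectivity of $d_2^{2,1}$ and \cite{QuadrelliWeigel2022}*{Theorem 4.5}. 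If you want a complete proof with the tools of this paper: Koszulity of $(\ker\psi^\bullet)(2)$ implies it is quadratic and that $\rmH_{i,i+3}(\Lambda^\bullet(V),\ker\psi^\bullet)=0$ for all $i$, so Theorem~\ref{thm:WeakKoszulBPC} applies; note that even then the conilpotent-coalgebra input is still needed to kill $\rmH_{0,2}(\Lambda^\bullet(V),N)$, which is a further indication that a pure $\mathrm{Tor}$-degree count over $\rmH^\bullet(G(\theta),\FF_p)$ cannot by itself yield the theorem.
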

This criterion depends on the vanishing of infinitely many cohomology groups, since the definition of Koszulity asserts that $\rmH_{ij}(\Lambda^\bullet(V),\ker \psi^\bullet)=0$ for all $j\neq i+2$, where $V=\rmH^1(G_\mathbb{K},\FF_p)$. See Section~\ref{sec:CohomologyOfGradedAlgebras} for the definition of the (co-)homology groups of graded algebras. Positselski conjectured that the conditions for this theorem hold universally in \cite{Positselski2014}*{Conjecture}.

In \cite{QuadrelliWeigel2022}, Quadrelli and Weigel gave a new criterion for the Bogomolov--Positselski property, depending only on two cohomology groups, but in a sophisticated way. Let $(G,\theta)$ be a torsion-free Kummerian oriented pro-$p$ group, then there is the Hochschild-Serre spectral sequence associated to the group extension $1\to K_\theta(G)\to G\to G(\theta)\to 1$.
This spectral sequence will be denoted by 
\begin{align}
\label{equ:HochschildSerreSpectral}
    E_2^{s,t}:=\rmH^s(G(\theta),\rmH^t(K_\theta(G),\FF_p))\Longrightarrow \rmH^{s+t}(G,\FF_p).
\end{align}
\begin{thm*}[{\cite[Theorem 4.5]{QuadrelliWeigel2022}}]
\label{thm:QuadrelliWeigel_BPC}
    Let $(G,\theta)$ be a torsion-free, Kummerian, oriented pro-$p$ group with $G$ being $\rmH^\bullet$-quadratic, then $(G,\theta)$ has the Bogomolov--Positselski property if and only if the differential $d_2^{2,1}:E^{2,1}_2\to E^{4,0}_2$ in the spectral sequence in (\ref{equ:HochschildSerreSpectral}) is injective.
\end{thm*}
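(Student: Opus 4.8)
Write $N:=K_\theta(G)=I_\theta(G)$ (these coincide since $(G,\theta)$ is Kummerian), $Q:=G(\theta)=G/N$, and $V:=\rmH^1(G,\FF_p)$. The plan is to translate freeness of $N$ into the vanishing of one entry of the spectral sequence~(\ref{equ:HochschildSerreSpectral}), and then to connect that entry to $d_2^{2,1}$ through the multiplicative structure. I would first record three preliminaries. (1) Since $N$ is pro-$p$, it is free iff $\operatorname{cd}_p(N)\le 1$ iff $\rmH^2(N,\FF_p)=0$; and as $Q$ is pro-$p$ acting continuously on the discrete torsion module $\rmH^2(N,\FF_p)$, the latter vanishes iff $\rmH^2(N,\FF_p)^Q=E_2^{0,2}=0$. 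So the Bogomolov--Positselski property is equivalent to $E_2^{0,2}=0$. (2) Because $(G,\theta)$ is torsion-free, $N\subseteq\Phi(G)$: modulo $\Phi(G)$ a generator $h^{-\theta(g)}ghg^{-1}$ of $K_\theta$ reduces to $h^{1-\theta(g)}$, and $1-\theta(g)\in p\ZZ_p$. Hence $\psi^1$ is an isomorphism, and, as $G$ is $\rmH^\bullet$-quadratic, $\psi^\bullet\colon\Lambda^\bullet V\to\rmH^\bullet(G,\FF_p)$ is a surjection of quadratic algebras. (3) The edge morphism $E_2^{n,0}\twoheadrightarrow E_\infty^{n,0}\hookrightarrow\rmH^n(G,\FF_p)$ is $\psi^n$; its surjectivity forces $E_\infty^{n,0}\xrightarrow{\ \sim\ }\rmH^n(G,\FF_p)$, so the Hochschild--Serre filtration is trivial and $E_\infty^{s,t}=0$ for every $t\ge 1$.

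Given these, the implication ``Bogomolov--Positselski $\Rightarrow d_2^{2,1}$ injective'' is immediate: if $N$ is free then $E_2^{s,t}=0$ for all $t\ge 2$, so the entry $(2,1)$ receives no differential and is a permanent cycle from page $2$ on; thus $E_\infty^{2,1}=E_3^{2,1}=\ker(d_2^{2,1})$, which is $0$ by~(3).

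For the converse I would argue as follows. Assume $d_2^{2,1}$ is injective. From $d_2\circ d_2=0$ we get $\operatorname{im}(d_2^{0,2})\subseteq\ker(d_2^{2,1})=0$, hence $d_2^{0,2}=0$ and $E_3^{0,2}=E_2^{0,2}$. The only remaining differential leaving $(0,2)$ is $d_3^{0,2}\colon E_3^{0,2}\to E_3^{3,0}$, so by~(1) and~(3) it suffices to show $d_3^{0,2}=0$; for this, in turn, it is enough to show that the natural epimorphism $E_3^{3,0}\to E_\infty^{3,0}$ is an isomorphism (then nothing can land in the spot $(3,0)$ after page $2$). Now the five-term exact sequence of $1\to N\to G\to Q\to 1$ together with $\psi^1$ an isomorphism shows the restriction $\rmH^1(G,\FF_p)\to\rmH^1(N,\FF_p)^Q$ is zero, whence $\ker\psi^2=\operatorname{im}(d_2^{0,1})$. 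Since the spectral sequence is multiplicative and $d_2$ vanishes on the base row, $d_2^{1,1}$ is $\Lambda^\bullet V$-linear, so $\operatorname{im}(d_2^{1,1})\supseteq V\cdot\operatorname{im}(d_2^{0,1})=V\cdot\ker\psi^2$; and because $\rmH^\bullet(G,\FF_p)$ is quadratic, $\ker\psi^3=V\cdot\ker\psi^2$. With the always-valid inclusion $\operatorname{im}(d_2^{1,1})\subseteq\ker\psi^3$ this yields $\operatorname{im}(d_2^{1,1})=\ker\psi^3$, hence $E_3^{3,0}=\Lambda^3V/\ker\psi^3\cong\rmH^3(G,\FF_p)=E_\infty^{3,0}$ with the comparison map being the identity; so $d_3^{0,2}=0$ and $E_2^{0,2}=0$.

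The main obstacle is exactly this last step of the converse: one must rule out a ``phantom'' invariant class in $\rmH^2(N,\FF_p)$ that is invisible to the primary differential $d_2^{2,1}$ but would only be cancelled by the secondary differential $d_3^{0,2}$. The resolution is the identity $\operatorname{im}(d_2^{1,1})=\ker\psi^3$, which hinges on the derivation property of the Hochschild--Serre spectral sequence and on quadraticity of $\rmH^\bullet(G,\FF_p)$ in degree $3$ (supplied, in the Galois-theoretic setting, by the norm residue theorem). The remaining points are routine: the low-degree bookkeeping about which entries support incoming or outgoing differentials, the continuity/invariants argument in~(1), and the compatibility of the exterior-algebra identification $\rmH^\bullet(G(\theta),\FF_p)\cong\Lambda^\bullet V$ with the cup products used above.
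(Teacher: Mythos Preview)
The paper does not contain a proof of this statement: it is quoted from \cite{QuadrelliWeigel2022}*{Theorem~4.5} and used as input for the paper's own results, so there is no in-paper argument to compare against. That said, your proof is correct and is essentially the argument one finds in Quadrelli--Weigel. The three preliminary facts you isolate are exactly the ones needed: the Nakayama-type reduction $E_2^{0,2}=0\Leftrightarrow \rmH^2(N,\FF_p)=0$, the inclusion $N\subseteq\Phi(G)$ making $\psi^1$ an isomorphism, and the collapse $E_\infty^{s,t}=0$ for $t\ge1$ forced by surjectivity of $\psi^\bullet$. The forward direction is immediate, and for the converse your computation $\operatorname{im}(d_2^{1,1})=\ker\psi^3$ via the derivation property and quadraticity is the crux; this is precisely the content the present paper also extracts (in the proof of Theorem~\ref{thm:TorSES}) when it invokes the injectivity of $d_2^{0,1}$ and $d_2^{1,1}$ and the fact that $\ker\psi^\bullet$ is generated in degree~$2$, citing \cite{QuadrelliWeigel2022}*{Proposition~4.4 and Section~4.2}. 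So your write-up is a faithful self-contained reconstruction of the cited theorem, and it dovetails with the spectral-sequence bookkeeping the paper relies on elsewhere.
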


\subsection{Main results and structure of the paper}
Quadrelli and Weigel asked in \cite[Remark 1.5]{QuadrelliWeigel2022} if there is a connection between Theorem their theorem and the criterion by Positsitselski. More precisely, if there is a way to express $\ker d_2^{2,1}$ in terms of the certain (co-)homology groups of graded $\rmH^\bullet(G(\theta),\FF_p)$ modules.

In this paper, we give an affirmative answer to this question. The following theorem gives a first description and is an important cornerstone to the other criteria.
\begin{mainthm}
\label{thm:TorSES}
    Let $(G,\theta)$ be a torsion-free, Kummerian, oriented pro-$p$ group, such that $G$ is $\rmH^\bullet$-quadratic. We set  $V:=\rmH^1(G,\FF_p)$, $B:=\rmH^\bullet(G,\FF_p)$ and let $N$ be the graded $\Lambda^\bullet(V)$-module $\rmH^\bullet(G(\theta),\rmH^1(K_\theta,\FF_p))$. Then there is an exact sequence:
    \begin{equation*}
        \begin{tikzcd}[column sep=small]
            0\arrow[r]&\rmH_{1,2}(\Lambda^\bullet(V),N)\arrow[r] &\rmH_{2,4}(\Lambda^\bullet(V),B)\arrow[r] &\ker d_2^{2,1}\arrow[r]&\rmH_{0,2}(\Lambda^\bullet(V),N)\arrow[r]&0
        \end{tikzcd}
    \end{equation*}
    In particular $\ker d_2^{2,1}=0$ if and only if the first map is an isomorphism and $H_{0,2}(\Lambda^\bullet(V),N)=0$.
\end{mainthm}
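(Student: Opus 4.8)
The plan is to extract the relevant exact sequence from a careful analysis of the low-degree terms of the Hochschild--Serre spectral sequence \eqref{equ:HochschildSerreSpectral}, combined with the Koszul-type homological machinery for the exterior algebra $\Lambda^\bullet(V)$. The starting observation is that since $(G,\theta)$ is torsion-free and Kummerian, $K_\theta = I_\theta$, so $\rmH^\bullet(G(\theta),\FF_p) \cong \Lambda^\bullet(V)$ and the edge maps and low-degree differentials of the spectral sequence become computable. The term $E_2^{2,1} = \rmH^2(G(\theta), \rmH^1(K_\theta,\FF_p))$ is, by definition, the degree-$2$ piece of the $\Lambda^\bullet(V)$-module $N$; I want to identify $\ker d_2^{2,1}$ with something intrinsic to $N$ and $B$.

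First I would unwind the differential $d_2^{2,1}:E_2^{2,1}\to E_2^{4,0}$ and the surrounding five-term-type exact sequences. The key is that $E_2^{s,0} = \rmH^s(G(\theta),\FF_p) = \Lambda^s(V)$ maps to $\rmH^s(G,\FF_p) = B^s$ via the inflation $\psi^\bullet$, and because $G$ is $\rmH^\bullet$-quadratic this map is surjective in every degree. The standard analysis of the spectral sequence gives that the image of $d_2^{2,1}$ in $E_2^{4,0} = \Lambda^4(V)$ is comparable to a Koszul syzygy space, and the relevant obstruction is governed by $\ker(\psi^4) = (\ker\psi^\bullet)_4$ together with how relations propagate; this is precisely where $\rmH_{2,4}(\Lambda^\bullet(V),B)$ enters, via the minimal free resolution of $B$ over $\Lambda^\bullet(V)$. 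Meanwhile, the cokernel/kernel of the relevant map on the $N$-side is measured by $\rmH_{0,2}(\Lambda^\bullet(V),N) = N_2 / (\Lambda^1(V)\cdot N_1)$ and $\rmH_{1,2}(\Lambda^\bullet(V),N)$, the first syzygies of $N$ in internal degree $2$. Concretely I expect to produce the four-term sequence by splicing: (i) the identification $E_2^{2,1} \cong N_2$ and a description of the $d_2$-source via the complex $\Lambda^1(V)\otimes N_1 \to N_2$; (ii) the norm-residue/quadraticity input identifying $E_2^{2,0}\to E_2^{2,2}$-type data and $\ker\psi$ with Koszul homology of $B$; and (iii) a diagram chase assembling these into the displayed exact sequence.

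Technically, I would set up two bicomplexes or, more economically, compare the Hochschild--Serre spectral sequence with the Priddy/Koszul complex computing $\rmH_{i,j}(\Lambda^\bullet(V),-)$. Since $\rmH_{i,j}(\Lambda^\bullet(V),M)$ for the exterior algebra is computed by the Koszul complex $\cdots \to \Gamma^2(V)\otimes M \to V\otimes M \to M \to 0$ (divided powers in the dual), the groups in low internal degree are: $\rmH_{0,2}(\Lambda^\bullet(V),N) = \coker(V\otimes N_1 \to N_2)$, $\rmH_{1,2}(\Lambda^\bullet(V),N) = \ker(V\otimes N_1 \to N_2)/\mathrm{im}(\Gamma^2 V \otimes N_0 \to V\otimes N_1)$ — but here $N_0 = \rmH^0(G(\theta),\rmH^1(K_\theta,\FF_p)) = \rmH^1(K_\theta,\FF_p)^{G(\theta)}$, which by the Kummerian hypothesis and the structure of $G(\theta)$-action I expect to control — and similarly $\rmH_{2,4}(\Lambda^\bullet(V),B)$ unwinds in terms of $B_2 = \rmH^2(G,\FF_p)$, $B_3$, $B_4$ and the Koszul differentials. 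The exact sequence should then drop out of the long exact sequence associated to a short exact sequence of complexes relating the Koszul complex on $N$ to the Koszul complex on $B$ shifted, with the connecting maps realized by the spectral sequence differentials; the final assertion ($\ker d_2^{2,1}=0 \iff$ the first map is an isomorphism and $\rmH_{0,2}(\Lambda^\bullet(V),N)=0$) is then immediate from exactness.

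The main obstacle I anticipate is the bookkeeping in step (ii): pinning down exactly which Koszul homology group of $B$ receives the image of $d_2^{2,1}$ and why the comparison map is well-defined and has the stated kernel/cokernel. This requires carefully matching the internal grading conventions on $\rmH_{i,j}$ with the bigrading $(s,t)$ of Hochschild--Serre, and verifying that the quadraticity of $B$ (so that $\psi^\bullet$ is surjective with kernel generated in degree $2$) is exactly what forces the middle term to be $\rmH_{2,4}(\Lambda^\bullet(V),B)$ rather than a larger obstruction group. A secondary subtlety is handling $N_1 = \rmH^1(G(\theta),\rmH^1(K_\theta,\FF_p))$ and its relation to $\rmH^2(G,\FF_p)$ through the spectral sequence edge maps, since the injectivity of $d_2^{2,1}$ interacts with $d_2^{1,1}$ and $d_2^{0,1}$; I would need the Kummerian hypothesis to kill or control the lower differentials so that the four-term sequence is clean.
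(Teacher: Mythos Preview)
Your instincts are sound --- the Hochschild--Serre spectral sequence, the quadraticity of $B$, and the Koszul-homology groups of $N$ and $B$ over $\Lambda^\bullet(V)$ are indeed the ingredients --- but the proposal does not yet contain the organizing idea that makes the argument go through, and the ``bookkeeping'' obstacle you flag is a symptom of this. The paper does \emph{not} try to match the Koszul complex against the Hochschild--Serre bigrading directly. Instead it first extracts from the spectral sequence a single exact sequence of graded $\Lambda^\bullet(V)$-modules
\[
0 \longrightarrow \ker d_2^{\bullet,1} \longrightarrow N \xrightarrow{\,d_2^{\bullet,1}\,} \Lambda^\bullet(V) \xrightarrow{\,\psi^\bullet\,} B \longrightarrow 0,
\]
valid because the spectral sequence is multiplicative and $\psi^\bullet$ is surjective (this is \cite{QuadrelliWeigel2022}*{Proposition~4.4(ii)}). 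After the shift $M:=N(-2)$, $K:=(\ker d_2^{\bullet,1})(-2)$, one has a four-term acyclic complex of $A$-modules with $A=\Lambda^\bullet(V)$. The paper then proves a purely homological lemma: for any such complex with $M_i=0$ for $i<1$ and $K_i=0$ for $i<4$, the Cartan--Eilenberg hyperhomology spectral sequence (which collapses to zero) forces a five-term exact sequence
\[
0\to \rmH_{1,4}(A,M)\to \rmH_{2,4}(A,B)\to K_4 \to \rmH_{0,4}(A,M)\to \rmH_{1,4}(A,B)\to 0.
\]
Undoing the shift gives exactly the sequence in the theorem, once one checks $\rmH_{1,4}(A,B)=0$; this last vanishing is where quadraticity of $B$ enters, since $\ker\psi^\bullet$ generated in degree~$2$ gives $\rmH_{1,4}(A,B)\cong \rmH_{0,4}(A,\ker\psi^\bullet)=0$.

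The piece you were circling around but did not pin down is the verification of $K_2=K_3=0$, i.e.\ the injectivity of $d_2^{0,1}$ and $d_2^{1,1}$. The first is the standard five-term exact sequence; the second comes from the seven-term exact sequence together with the surjectivity of $\psi^2$. These are precisely the ``lower differentials'' you mention, and their role is not to feed into a diagram chase on Koszul complexes but simply to guarantee the vanishing hypothesis $K_i=0$ for $i<4$ so that the abstract homological lemma applies. Once you repackage everything as a four-term exact sequence of graded modules and run the hyperhomology spectral sequence, the grading bookkeeping disappears: there is no need to identify ``which Koszul homology group receives the image of $d_2^{2,1}$'' because $K_4=\ker d_2^{2,1}$ sits in the sequence by construction.
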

This theorem depends on only finitely many (co)homology groups, but again in a sophisticated way, as the module $N$, whose properties as $\rmH^\bullet(G(\theta),\FF_p)$-module determine the Bogomolov--Positselski property, seems to be hard to control. Nevertheless, the vanishing of $H_{0,2}(\Lambda^\bullet(V),N)$ has a concrete description leading to Corollary~\ref{cor:CupProductSurjective}, which has striking similarity with the statement of the Merkurjev--Suslin theorem. 

On the other hand, the vanishing of the group $H_{2,4}(\Lambda^\bullet(V),B)\cong H_{1,4}(\Lambda^\bullet(V),\ker \psi^\bullet)$ is predicted by Positselski's Module Koszulity Conjecture~1 and follows from even weaker properties already. This leads to the following question:
\begin{quest}
    \label{ques:IsTor24eq0}
    What conditions on an oriented pro-$p$ group $(G,\theta)$ are sufficient in order to conclude $\rmH_{2,4}(\Lambda^\bullet(\rmH^1(G,\FF_p)),\FF_p(\rmH^\bullet(G,\FF_p))=0$?
\end{quest}
In Example~\ref{exmp:F2 times F2} we study the group $G=F_2\times F_2$ with trivial orientation. It satisfies the conditions of Theorem~\ref{thm:TorSES}, but $\ker d^{2,1}_2$ is non-zero. In fact, using the exact sequence we are able to determine that $\ker d_2^{2,1}\cong \FF_p$. 

The conclusions of Theorem~\ref{thm:TorSES} also allow us to relax the conditions of Theorem~\ref{thm:Positelski_Koszul} by applying the same techniques as Positselski in \cite[Theorem 4]{Positselski2005}:
\begin{mainthm}
\label{thm:WeakKoszulBPC}
Keep the notation of Theorem~\ref{thm:TorSES}. Assume $(\ker \psi^\bullet)(2)$ is a quadratic $\Lambda^\bullet(V)$-module and $H_{i,i+3}(\Lambda^\bullet(V),\ker\psi^\bullet)=0$ for all $i\in \NN_0$, then $(G,\theta)$ has the Bogomolov--Positselski property.
\end{mainthm}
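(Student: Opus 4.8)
The plan is to combine Theorem~\ref{thm:TorSES} with the criterion of Quadrelli--Weigel. By Theorem~\ref{thm:QuadrelliWeigel_BPC} it suffices to prove that $\ker d_2^{2,1}=0$. Reading off the exact sequence of Theorem~\ref{thm:TorSES}, the group $\ker d_2^{2,1}$ is an extension of $\rmH_{0,2}(\Lambda^\bullet(V),N)$ by a quotient of $\rmH_{2,4}(\Lambda^\bullet(V),B)$; hence it is enough to establish the two vanishings
\begin{equation*}
\rmH_{2,4}(\Lambda^\bullet(V),B)=0\qquad\text{and}\qquad\rmH_{0,2}(\Lambda^\bullet(V),N)=0 .
\end{equation*}

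For the first vanishing I would use that $\psi^\bullet$ is surjective, since $G$ is $\rmH^\bullet$-quadratic, so that $0\to\ker\psi^\bullet\to\Lambda^\bullet(V)\to B\to0$ is a short exact sequence of graded $\Lambda^\bullet(V)$-modules. Dimension shifting in the associated long exact sequence, together with the fact that $\Lambda^\bullet(V)$ is free over itself, yields the isomorphism $\rmH_{2,4}(\Lambda^\bullet(V),B)\cong\rmH_{1,4}(\Lambda^\bullet(V),\ker\psi^\bullet)$ noted in the introduction; the right-hand side vanishes by the hypothesis $\rmH_{i,i+3}(\Lambda^\bullet(V),\ker\psi^\bullet)=0$ in the case $i=1$ (it is also forced by $(\ker\psi^\bullet)(2)$ being a quadratic module). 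Only this one instance of the hypothesis is needed here; the remaining instances, together with the quadraticity, go into the second vanishing.

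The second vanishing is the heart of the matter, and here I would transpose the inductive argument of Positselski~\cite{Positselski2005}*{Theorem~4}. First one records, as in \cite{QuadrelliWeigel2022}, that the assumption that $G$ is $\rmH^\bullet$-quadratic forces $E_\infty^{s,t}=0$ for all $t\ge1$. It follows that $d_2^{\bullet,1}$ has image $(\ker\psi^\bullet)(2)$, giving a short exact sequence $0\to\ker d_2^{\bullet,1}\to N\xrightarrow{d_2}(\ker\psi^\bullet)(2)\to0$ of graded $\Lambda^\bullet(V)$-modules in which $\ker d_2^{\bullet,1}$ vanishes in internal degrees $\le1$ and, in internal degree $2$, equals $\operatorname{im}(d_2^{0,2})\cong\rmH^2(K_\theta,\FF_p)^{G(\theta)}$. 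Combined with the quadraticity of $(\ker\psi^\bullet)(2)$ and the vanishing $\rmH_{1,4}(\Lambda^\bullet(V),\ker\psi^\bullet)=0$, the associated long exact sequence reduces $\rmH_{0,2}(\Lambda^\bullet(V),N)=0$ to the vanishing of that degree-$2$ piece, equivalently to the freeness of $K_\theta$. To reach this, one iterates the construction one row higher: the module $E_2^{\bullet,2}=\rmH^\bullet(G(\theta),\rmH^2(K_\theta,\FF_p))$ is resolved, through the differentials of~(\ref{equ:HochschildSerreSpectral}), by the rows $E_2^{\bullet,t}$ with $t\ge3$, each of which is again governed in low internal degree by $\ker\psi^\bullet$. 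The quadraticity of $(\ker\psi^\bullet)(2)$ propagates the required linearity from one row to the next, while the remaining hypotheses $\rmH_{i,i+3}(\Lambda^\bullet(V),\ker\psi^\bullet)=0$ for $i\ge2$ bound the internal degrees in which these resolutions can fail to be linear; together they force $\rmH^\bullet(G(\theta),\rmH^2(K_\theta,\FF_p))$, and hence $\rmH^2(K_\theta,\FF_p)$, to vanish.

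The main obstacle is exactly this iteration. Since~(\ref{equ:HochschildSerreSpectral}) is not a complex, the higher differentials $d_r$ ($r\ge3$) between the rows $E_2^{\bullet,t}$, $t\ge1$, contribute correction terms that must be absorbed one internal degree at a time: in the Koszul situation of Theorem~\ref{thm:Positelski_Koszul} one has a genuine linear resolution and these corrections cause no trouble, whereas under the weaker hypotheses of Theorem~\ref{thm:WeakKoszulBPC} one must verify that the vanishings $\rmH_{i,i+3}(\Lambda^\bullet(V),\ker\psi^\bullet)=0$ suffice to keep the induction going at every stage. That this breaks down for $G=F_2\times F_2$, where $(\ker\psi^\bullet)(2)$ is not even a quadratic module, is consistent with the computation $\ker d_2^{2,1}\cong\FF_p$ in Example~\ref{exmp:F2 times F2}.
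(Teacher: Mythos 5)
Your overall strategy coincides with the paper's: by Theorem~\ref{thm:QuadrelliWeigel_BPC} and the exact sequence of Theorem~\ref{thm:TorSES} it suffices to prove $\rmH_{2,4}(\Lambda^\bullet(V),B)=0$ and $\rmH_{0,2}(\Lambda^\bullet(V),N)=0$, and your treatment of the first vanishing (dimension shifting along $0\to\ker\psi^\bullet\to\Lambda^\bullet(V)\to B\to0$, then either the hypothesis at $i=1$ or the quadraticity of $(\ker\psi^\bullet)(2)$) is exactly what the paper does. The gap lies in the second vanishing, which is the entire content of the theorem. Note that under the quadraticity hypothesis the long exact homology sequence of $0\to\ker d_2^{\bullet,1}\to N\to(\ker\psi^\bullet)(2)\to0$ gives $\rmH_{0,2}(\Lambda^\bullet(V),N)\cong(\ker d_2^{\bullet,1})_2=\ker d_2^{2,1}$, since $\rmH_{0,2}$ and $\rmH_{1,2}$ of a quadratic module vanish and $\ker d_2^{\bullet,1}$ is concentrated in internal degrees $\geq 2$. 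So the vanishing you are trying to establish is \emph{literally equivalent} to the conclusion $\ker d_2^{2,1}=0$, i.e.\ to the freeness of $K_\theta$. Your reduction \lq\lq to the vanishing of that degree-$2$ piece, equivalently to the freeness of $K_\theta$\rq\rq\ is therefore circular, and the proposed \lq\lq iteration one row higher\rq\rq\ through the rows $E_2^{\bullet,t}$, $t\ge 2$, supplies no mechanism for breaking the circle; you flag it yourself as the main unresolved obstacle.

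The missing input is that $N=\rmH^\bullet(G(\theta),\rmH^1(K_\theta,\FF_p))$ carries more structure than that of a graded $\Lambda^\bullet(V)$-module: it is the cohomology of a comodule $P=\rmH^1(K_\theta,\FF_p)$ over the conilpotent coalgebra $C=\FF_p(\!(G(\theta))\!)$, and the paper exploits exactly this. It adapts Positselski's Theorem~4 into Proposition~\ref{thm:conilpotent-coalgebras}: if $A=\rmH^\bullet(C,k)$ is Koszul, $M=\rmH^\bullet(C,P)$, $\rmH_{i,i+1}(A,{\rm q}_AM)=0$ for all $i$, and ${\rm q}_AM\to M$ is an isomorphism in degree $1$ and a monomorphism in degree $2$, then it is an isomorphism in degree $2$, whence $\rmH_{0,2}(A,M)=0$. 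The hypotheses of Theorem~\ref{thm:WeakKoszulBPC} are tailored to feed this proposition: quadraticity gives ${\rm q}_AM\cong(\ker\psi^\bullet)(2)$, the condition $\rmH_{i,i+3}(\Lambda^\bullet(V),\ker\psi^\bullet)=0$ becomes condition~(1), and the degree conditions follow from the identifications $(\ker\psi^\bullet)_2\cong M_0$, $(\ker\psi^\bullet)_3\cong M_1$ and the splitting of $({\rm q}_AM)_2\to M_2\to(\ker\psi^\bullet)_4$. Its proof is Positselski's induction along the conilpotency filtration of $C$, not an induction on the rows of the Hochschild--Serre spectral sequence. Without this proposition, or an equivalent substitute using the coalgebra structure, your argument does not close.
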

This theorem again depends on the vanishing of infinitely many cohomology groups, but does not require the \lq\lq full\rq\rq\, Koszulity of $(\ker\psi^\bullet)(2)$. In Section~\ref{sec:Computation} we show that is suffices to compute three graded cohomology groups to verify the conditions of Theorem~\ref{thm:WeakKoszulBPC} for an ideal of the exterior algebra $\Lambda^\bullet(V)$.

Finally in Section~\ref{sec:ModuleKoszulityForElementaryType} we show that for a torsion-free oriented pro-$p$ group $(G,\theta)$ of \emph{elementary type} $(\ker \psi^\bullet)(2)$ is a Koszul $\Lambda^\bullet(\rmH^1(G,\FF_p))$-module and therefore not only satisfies the conditions of Theorem~\ref{thm:WeakKoszulBPC} but also the ones of Positselski's Theorem~\ref{thm:Positelski_Koszul}. Groups of elementary type are groups pro-$p$ groups constructed from Demushkin groups and free pro-$p$ groups by free pro-$p$ products and semidirect products with free abelian pro-$p$ groups. For a precise definition of this class of groups, we refer to Definition~\ref{def:ElementaryType}. Prior it was shown Quadrelli and Weigel that groups of elementary type have the Bogomolov--Positselski property (cf. \cite[Section 5]{QuadrelliWeigel2022}). 
\begin{mainthm}
\label{thm:Elementary Type satisfies Module Koszulity}
    Let $(G,\theta)$ be a torsion-free oriented pro-$p$ group of elementary type, then $(\ker \psi^\bullet)(2)$ is Koszul.
\end{mainthm}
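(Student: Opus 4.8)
The plan is to argue by induction on the construction of $(G,\theta)$ as a group of elementary type (Definition~\ref{def:ElementaryType}). Since a free pro-$p$ group is a free pro-$p$ product of copies of $(\ZZ_p,\mathbf 1)$, it suffices to treat $(\{1\},\mathbf 1)$, the pairs $(\ZZ_p,\theta)$, and the Demushkin groups as base cases, and then to show that Koszulity of $(\ker\psi^\bullet)(2)$ is inherited under free pro-$p$ products $(G_1,\theta_1)\amalg(G_2,\theta_2)$ and under the cyclotomic semidirect product $(H,\theta_H)\rightsquigarrow(\ZZ_p^n\rtimes_{\theta_H}H,\theta)$. I write $A:=\Lambda^\bullet(V)$ with $V=\rmH^1(G,\FF_p)$ and $J:=\ker\psi_G^\bullet$, so that $(\ker\psi_G^\bullet)(2)=J(2)$ is generated in degree $0$ (using that $\rmH^\bullet$-quadraticity is itself preserved by these operations), and I shall use freely the standard facts over a Koszul algebra collected in Section~\ref{sec:CohomologyOfGradedAlgebras}: $\FF_p$ is a Koszul module; a syzygy of a Koszul module is Koszul after the appropriate shift; $M_1\otimes_{\FF_p}M_2$ is a Koszul $A_1\otimes_{\FF_p}A_2$-module whenever each $M_i$ is a Koszul $A_i$-module; restriction of scalars along a quotient $A\twoheadrightarrow A/(L)$ by a space $L\subseteq A_1$ of linear forms preserves module Koszulity; and the ``two-out-of-three'' principle that in a short exact sequence of graded modules generated in degree $0$ over a Koszul algebra, if the two outer terms are Koszul then so is the middle one (the connecting maps in the $\operatorname{Tor}$ long exact sequence land in the wrong internal degree). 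For the base cases: for $(\{1\},\mathbf 1)$ and $(\ZZ_p,\theta)$ the map $\psi_G^\bullet$ is an isomorphism onto $\rmH^\bullet(G,\FF_p)$, so $J=0$; for a Demushkin group $G$ of rank $d$ the algebra $B:=\rmH^\bullet(G,\FF_p)$ is a quadratic Poincaré duality algebra with Hilbert series $1+dt+t^2$, one has $J=0$ when $d=2$, and for $d\geq3$ a direct computation with the explicit presentation gives $J=R\oplus\Lambda^{\geq3}(V)$ with $R\subseteq\Lambda^2(V)$ the hyperplane of relations, whose shift $J(2)$ is then checked to be a Koszul $A$-module — this is precisely the kind of computation developed in Section~\ref{sec:Computation}.

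For a free pro-$p$ product, write $V=V_1\oplus V_2$, $A=A_1\otimes_{\FF_p}A_2$ with $A_i=\Lambda^\bullet(V_i)$, and $J_i:=\ker\psi_{G_i}^\bullet$. Since $\rmH^\bullet(G,\FF_p)$ is the coproduct $\rmH^\bullet(G_1,\FF_p)\sqcap_{\FF_p}\rmH^\bullet(G_2,\FF_p)$, an inspection in each bidegree produces a short exact sequence of graded $A$-modules $0\to P\to J\to\overline{J_1}\oplus\overline{J_2}\to0$, where $P:=\Lambda^{\geq1}(V_1)\otimes_{\FF_p}\Lambda^{\geq1}(V_2)$ is the ideal of ``mixed'' elements and $\overline{J_i}$ is $J_i$ regarded over $A$ through the quotient $A\twoheadrightarrow A_i$ (by the linear forms $V_j$, $j\neq i$). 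Here $P(2)\cong(\Lambda^{\geq1}(V_1))(1)\otimes_{\FF_p}(\Lambda^{\geq1}(V_2))(1)$ is a tensor product of shifted first syzygies of $\FF_p$ over the Koszul algebras $A_i$, hence a Koszul $A$-module; and $\overline{J_i}(2)$ is the restriction along $A\twoheadrightarrow A_i$ of $(\ker\psi_{G_i}^\bullet)(2)$, which is Koszul by the inductive hypothesis, so $\overline{J_1}(2)\oplus\overline{J_2}(2)$ is Koszul over $A$. As all three modules in the shifted sequence $0\to P(2)\to J(2)\to\overline{J_1}(2)\oplus\overline{J_2}(2)\to0$ are generated in degree $0$, the two-out-of-three principle gives that $J(2)=(\ker\psi_G^\bullet)(2)$ is Koszul.

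For a cyclotomic semidirect product $G=\ZZ_p^n\rtimes_{\theta_H}H$, torsion-freeness of $(G,\theta)$ forces $\theta_H\equiv1\pmod p$ (and $\pmod 4$ when $p=2$), so $H$ acts trivially on $\rmH^\bullet(\ZZ_p^n,\FF_p)=\Lambda^\bullet(\FF_p^n)$. The Hochschild--Serre spectral sequence of $1\to\ZZ_p^n\to G\to H\to1$ is therefore a spectral sequence of algebras with $E_2^{s,t}=\rmH^s(H,\FF_p)\otimes\Lambda^t(\FF_p^n)$, generated as a bigraded algebra by its bottom row and its left column; the differentials vanish on the bottom row for degree reasons and on the left column because the extension splits — equivalently because restriction $\rmH^1(G,\FF_p)\to\rmH^1(\ZZ_p^n,\FF_p)$ is surjective — so the sequence degenerates. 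Using the section $H\hookrightarrow G$ (inflation) together with lifts $\chi_1,\dots,\chi_n\in\rmH^1(G,\FF_p)$ of a basis of $\rmH^1(\ZZ_p^n,\FF_p)$, which satisfy $\chi_i^2=0$ (for $p=2$ because the coordinate homomorphisms $\ZZ_2^n\to\ZZ/4$ extend over $G$ thanks to $\theta_H(H)\subseteq1+4\ZZ_2$), one obtains an isomorphism of graded algebras $\rmH^\bullet(G,\FF_p)\cong\rmH^\bullet(H,\FF_p)\otimes_{\FF_p}\Lambda^\bullet(\FF_p^n)$. Under the resulting decomposition $V=\rmH^1(H,\FF_p)\oplus W$ with $W=\FF_p^n$ and $A=\Lambda^\bullet(\rmH^1(H,\FF_p))\otimes_{\FF_p}\Lambda^\bullet(W)$, the canonical map $\psi_G^\bullet$ is identified with $\psi_H^\bullet\otimes\mathrm{id}_{\Lambda^\bullet(W)}$, so that $(\ker\psi_G^\bullet)(2)=(\ker\psi_H^\bullet)(2)\otimes_{\FF_p}\Lambda^\bullet(W)$; since $(\ker\psi_H^\bullet)(2)$ is Koszul over $\Lambda^\bullet(\rmH^1(H,\FF_p))$ by induction and $\Lambda^\bullet(W)$ is a Koszul algebra, the tensor product is a Koszul $A$-module. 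This closes the induction.

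The main obstacle is the cyclotomic semidirect product step, and within it the passage from degeneration of the Hochschild--Serre spectral sequence to a genuine isomorphism of graded \emph{algebras} $\rmH^\bullet(G,\FF_p)\cong\rmH^\bullet(H,\FF_p)\otimes\Lambda^\bullet(\FF_p^n)$: this requires controlling the multiplicative structure compatibly with the filtration and, when $p=2$, verifying that the new degree-one classes square to zero, which is exactly where the torsion-freeness hypothesis is used. A secondary point needing care is the Demushkin base case, where Koszulity of $J(2)$ must be established by the explicit exterior-algebra computations of Section~\ref{sec:Computation}; everything else — the exact sequence for a free product, the tensor-product and two-out-of-three bookkeeping, and the verification that the various degree shifts align — is routine once these two inputs are in place.
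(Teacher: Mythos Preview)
Your inductive skeleton coincides with the paper's (Propositions~\ref{prop:DemushkinKoszul}, \ref{prop:FreeProductKoszul}, \ref{prop:SemiDirectKoszul}), and the argument is correct. The semidirect-product step is identical to the paper's once one has the algebra isomorphism $\rmH^\bullet(G,\FF_p)\cong\rmH^\bullet(H,\FF_p)\otimes^{-1}\Lambda^\bullet(\FF_p^n)$; the paper simply cites this from \cite{MinacPasiniQuadrelliTan2021}*{Proposition~5.8}, whereas you sketch its derivation via Hochschild--Serre, so no genuine difference there.

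The free-product step is where you take a different route. You decompose $J=\ker\psi_G^\bullet$ directly via the short exact sequence
\[
0\longrightarrow P\longrightarrow J\longrightarrow \overline{J_1}\oplus\overline{J_2}\longrightarrow 0,
\qquad P=\Lambda^{\geq1}(V_1)\otimes^{-1}\Lambda^{\geq1}(V_2),
\]
and then feed in Koszulity of $P(2)\cong L_1(1)\otimes^{-1}L_1(1)$ and of the restricted $\overline{J_i}(2)$ via the two-out-of-three principle. The paper instead works on the quotient side, using the Mayer--Vietoris sequence $0\to\rmH^\bullet(G)\to\rmH^\bullet(G_1)\oplus\rmH^\bullet(G_2)\to\FF_p\to0$ together with Proposition~\ref{prop:CohomologyOfTensorProduct} to show directly that $\rmH_{j,k}(\Lambda,\rmH^\bullet(G))=0$ for $k>j+1$. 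Both arguments are short and rely on the same K\"unneth input (Proposition~\ref{prop:CohomologyOfTensorProduct}); yours has the advantage of making the role of the mixed ideal $P$ explicit, while the paper's avoids checking that $J\to\overline{J_1}\oplus\overline{J_2}$ is surjective and that the module structures match.

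The one place where your write-up is thinner than the paper's is the Demushkin base case: you defer to ``the computations of Section~\ref{sec:Computation}'', but that section gives a dual criterion rather than a proof. The paper's Proposition~\ref{prop:DemushkinKoszul} argues concretely: from $0\to I\to L_2(2)\to\FF_p\to0$ and Koszulity of $L_2(2)$ and $\FF_p$, the only possible off-diagonal contribution is $\rmH^{i,i+1}(\Lambda^\bullet(V),I)$, and this vanishes because the induced map $\alpha_{i+1}\colon\mathbb{S}^{i+1}(V^*)\to\mathbb{S}^{i+2}(V^*)\otimes V^*$, written in a symplectic basis for the Demushkin pairing, is visibly injective. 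You should spell this step out rather than gesture at Section~\ref{sec:Computation}.
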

The following conjecture is central in the study of maximal pro-$p$ Galois groups is due to I. Efrat (cf. \cites{Efrat1997, Efrat1998, Efrat1999}).
\begin{conj*}[Elementary Type Conjecture]
\label{conj:ElementaryType}
    Let $\mathbb{K}$ be a field. If $G_{\mathbb{K}}(p)$ is finitely generated, then $(G_{\mathbb{K}}(p),\theta_{\KK})$ is an oriented pro-$p$ group of elementary type.
\end{conj*}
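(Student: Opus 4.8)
The Elementary Type Conjecture is one of the central open problems in the theory of maximal pro-$p$ Galois groups, so rather than a complete argument I can only lay out the strategy any attack must follow and indicate where it stalls. The natural plan is to induct on the minimal number of generators $d=\dim_{\FF_p}\rmH^1(G_\KK(p),\FF_p)$ and to read off from the cohomology algebra which of the three elementary-type constructions applies. The cases $d\le 1$ are classical: under Assumption~\ref{ass:roots_of_unity} a one-generated torsion-free Galois group is $\ZZ_p$, which is free pro-$p$ of rank one and hence of elementary type. For the inductive step one would aim to prove that the $\rmH^\bullet$-quadratic algebra $\rmH^\bullet(G_\KK(p),\FF_p)$ forces $G_\KK(p)$ either to split as a free pro-$p$ product, to carry a Demushkin (Poincaré-duality) factor, or to be a semidirect product of a smaller elementary-type group with a free abelian pro-$p$ group $\ZZ_p^n$, in each case reducing $d$ and allowing the induction to proceed.

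The cohomological recognition of these building blocks is exactly the place where the quadratic and Koszul machinery of this paper is relevant. Writing $V=\rmH^1(G_\KK(p),\FF_p)$, a free pro-$p$ product is detected by a direct-sum decomposition $V=V_1\oplus V_2$ under which all mixed cup products $V_1\cup V_2$ vanish in $\rmH^2$, so that the relation space $R\subseteq\Lambda^2(V)$ contains the whole mixed part $V_1\wedge V_2$; a Demushkin factor is detected by a two-dimensional Poincaré-duality subquotient with nondegenerate cup-product pairing; and the semidirect factor is detected by the $\theta_\KK$-structure governing the free abelian quotient. The inputs here are the norm residue isomorphism theorem, which guarantees quadraticity, the Demushkin--Serre--Labute classification of Demushkin groups, and the theory of quadratic duals, through which one would try to certify that the only quadratic algebras arising as Galois cohomology are the iterated $*$-products and twisted constructions predicted by the conjecture.

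The decisive obstacle is that cohomology, even equipped with its full quadratic or Koszul structure, does not determine a finitely generated pro-$p$ group up to isomorphism, so no purely cohomological argument can close the induction: one must supply genuine arithmetic information about $\KK$. Concretely, bridging the gap requires reconstructing henselian valuations on $\KK$, together with the associated decomposition and inertia subgroups and the residue-field data, from the group $G_\KK(p)$ alone, in the spirit of the valuation-theoretic rigidity results of Engler--Koenigsmann, Jacob--Ware, and Efrat. Converting such rigidity into the precise free-product and semidirect decompositions demanded by elementary type, uniformly and without restricting the arithmetic type of $\KK$, is the part that remains genuinely open. For this reason the present paper does not prove the conjecture but instead takes it as a hypothesis: combined with Theorem~\ref{thm:Elementary Type satisfies Module Koszulity}, it yields Positselski's Module Koszulity Conjecture~1 for fields with finitely generated maximal pro-$p$ Galois group.
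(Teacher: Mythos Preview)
Your assessment is correct: the Elementary Type Conjecture is stated in the paper as an open conjecture, not as a theorem, and the paper offers no proof of it. It is attributed to Efrat, a list of special cases in which it is known is given, and it is then used only as a hypothesis whose combination with Theorem~\ref{thm:Elementary Type satisfies Module Koszulity} would yield Positselski's Module Koszulity Conjecture~1 in the finitely generated case. There is therefore no ``paper's own proof'' to compare against, and your decision to outline the shape of the problem and the obstructions rather than to attempt a proof is the appropriate response.

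Your sketch of the expected strategy---induction on the generator rank, cohomological detection of free-product, Demushkin, and semidirect decompositions, and the need for valuation-theoretic rigidity input in the style of Engler--Koenigsmann, Jacob--Ware, and Efrat---is a fair summary of how the literature approaches the problem, and your identification of the gap (cohomology alone does not pin down the group) is exactly the known obstruction. Nothing here is in tension with the paper.
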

Thus the Elementary Type Conjecture would imply together with Theorem~\ref{thm:Elementary Type satisfies Module Koszulity}, that the Module Koszulity Conjecture 1 by Positselski is valid for all fields $\KK$ satisfying \ref{ass:roots_of_unity} with $\KK^\times/\KK^{\times p}$ finite. 

The Elementary Type Conjecture is known to hold in the following cases:

\begin{itemize}
\item[(a)] $\KK$ is a local field, or an extension of transcendence degree 1 of a local field;
\item[(b)] $\KK$ is a PAC field, or an extension of relative transcendence degree 1 of a PAC field;
\item[(c)] $\KK$ is $p$-rigid {\rm (}for the definition of $p$-rigid fields see \cite{Ware1992}*{p.~722}{\rm )};
\item[(d)] $\KK$ is an algebraic extension of a global field of characteristic not $p$;
\item[(e)] $\KK$ is a valued $p$-Henselian field with residue field $\kappa$, and $G_{\kappa}(p)$ satisfies the strong $n$-Massey vanishing property for every $n>2$.
\end{itemize}

\section{(Co)homology of graded algebras}
\label{sec:CohomologyOfGradedAlgebras}
For this section we fix a base field $k$ and only consider $\ZZ$-graded $k$-vector spaces. We abbreviate the graded tensor product of graded $k$-vector spaces by $\otimes$. The purpose of this section is to recall the most basic definitions and results. For a more detailed explanation, we refer to \cite[Chapter 3]{LodayVallette2012} and \cites{PositselskiPolishchuk2005,PositselskiVishik1995,Positselski2005}.

\begin{defn}
    A \emph{graded $k$-algebra} is a graded $k$-vector space $A=\oplus_{i\in \ZZ}A_i$ together with a map $\mu:A\otimes A\to A$ of degree $0$ satisfying the usual axioms. A graded algebra $A$ is called \emph{connected} if $A_i=0$ for $i<0$ and $A_0\cong k$.
    
    A \emph{graded (right) $A$-module} $M$ over a graded algebra $A$ is a graded $k$-vector space together with a map $M\otimes A\to M$ of degree $0$ satisfying the usual identities.
\end{defn}
\begin{exmp}
    \begin{enumerate}
        \item The tensor algebra $\mathbb{T}^\bullet(V)$ over a $k$-vector space $V$ is defined by $\mathbb{T}^n(V)=V^{\otimes n}$ an product induced by:
        \begin{align*}
            (v_1\otimes ...\otimes v_j)\cdot (w_1\otimes ...\otimes w_i):=v_1\otimes ...\otimes v_j\otimes w_1\otimes ...\otimes w_i
        \end{align*}
        \item The exterior algebra $\Lambda^\bullet(V)$ over a $k$-vector space $V$ is defined by $\mathbb{T}^\bullet(V)/\langle v\otimes v : v\in V\rangle$. It is connected and graded-commutative, i.e., if $a,b\in \Lambda^\bullet(V)$ are homogeneous, then $a\cdot b=(-1)^{\deg(a)\cdot \deg(b)}b\cdot a$.
        \item The symmetric algebra $\mathbb{S}^\bullet(V)$ is similarly defined as $\mathbb{T}^\bullet(V)/\langle v\otimes w-w\otimes v:v,w\in V\rangle$ and if $n=\dim V<\infty$, then $\mathbb{S}^\bullet(V)\cong k[x_1,..,x_n]$ with the natural grading on the polynomial ring.
    \end{enumerate}
\end{exmp}
Let $A$ be a connected, graded $k$-algebra, then we denote by $A_+:=\bigoplus_{i\geq 1}A_i$ its \emph{augmentation ideal}. Consider the following complex of free $A$-modules, which is called the \emph{normalized bar-complex} of $A$ and denoted by $\mathcal{BAR}_*(A)$:
\begin{align*}
     k\leftarrow A\leftarrow  A\otimes_k A_+\leftarrow  A\otimes_k(A_+)^{\otimes 2}\leftarrow A\otimes_k(A_+)^{\otimes 3}\leftarrow ...
\end{align*}
The differentials are given by the usual formulas and are of degree $0$. It is a projective resolution of $k$, considered as trivial $A$-module, in an appropriate category of graded modules. Then for a graded $A$-module $M$ one defines the \emph{(co)homology groups of $A$ with coefficients in $M$} by
\begin{align*}
    \rmH_i(A,M)&:=\rmH_i(M\otimes_A\mathcal{BAR}_*(A))\qquad \text{and}\\
    \rmH^i(A,M)&:=\rmH^i(\Hom_{A}(\mathcal{BAR}_*(A),M)).
\end{align*}
From the grading of the normalized bar-complex and the grading of the tensor product $\bl \otimes_A\bl$ resp. $\Hom_A(\bl,\bl)$ one deduces that also the vector spaces $\rmH_i(A,M)$ and $\rmH^i(A,M)$ have a natural grading, i.e.,
\begin{align*}
    \rmH_i(A,M)=\bigoplus_{j}\rmH_{ij}(A,M)\qquad\text{and}\qquad \rmH^i(A,M)=\bigoplus_{j}\rmH^{ij}(A,M).
\end{align*}
Furthermore, $\mathcal{BAR}_*(A)$ is a DG-coalgebra, inducing a coproduct on the homology of $A$ and a product on the cohomology of $A$. Both the coproduct and coproduct respect the gradings of the (co)homology groups.
\begin{rem}
    If both $A$ and $M$ are locally finite dimensional, that is, $A_i$ and $M_i$ are finite dimensional for all $i$, then $H_{ij}(A,M)^*\cong H^{ij}(A,M)$, so the two can be used almost interchangeably (see \cite{PositselskiPolishchuk2005}*{Section 1}). 

    If $A$ (or $M$) are not locally finite dimensional, then the homology is usually better behaved than the cohomology. 
\end{rem}
For a graded module $M$, we define its \emph{$k$-shift} $M(k)$ by $M(k)_i=M_{i+k}$ for $k\in \ZZ$. Then $\rmH_{i,j}(A,M(k))\cong \rmH_{i,j+k}(A,M)$ and similarly for cohomology.

Using the normalized bar-complex it is not hard to show the following proposition:
\begin{prop}
\label{prop:CohomologyConcentrated}
    Let $A$ be a connected graded $k$-algebra and $M$ a graded $A$-module with $M_i=0$ for $i<m$ for some $m\in \ZZ$. Then for $j<i+m$ one has
    \begin{align*}
        \rmH_{ij}(A,M)=0\qquad\text{and}\qquad \rmH^{ij}(A,M)=0
    \end{align*}
\end{prop}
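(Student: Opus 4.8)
The plan is to argue directly from the normalized bar-complex $\mathcal{BAR}_*(A)$, exactly as alluded to in the sentence preceding the statement. Recall that the degree-$i$ term of the bar-complex is $A\otimes_k (A_+)^{\otimes i}$, and since $A$ is connected we have $(A_+)_\ell = 0$ for $\ell \le 0$, hence $(A_+)^{\otimes i}$ is concentrated in internal degrees $\ge i$. First I would compute the internal grading of the chain complex $M\otimes_A \mathcal{BAR}_*(A)$. Because $M\otimes_A \bigl(A\otimes_k (A_+)^{\otimes i}\bigr)\cong M\otimes_k (A_+)^{\otimes i}$ as graded $k$-vector spaces, and $M$ is concentrated in internal degrees $\ge m$ while $(A_+)^{\otimes i}$ is concentrated in internal degrees $\ge i$, the graded tensor product $M\otimes_k (A_+)^{\otimes i}$ is concentrated in internal degrees $\ge i+m$. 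In particular the internal-degree-$j$ part of the $i$-th chain group vanishes whenever $j < i+m$.

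From there the homological statement is immediate: $\rmH_{ij}(A,M)$ is a subquotient of the internal-degree-$j$ part of $M\otimes_k (A_+)^{\otimes i}$, which is $0$ for $j<i+m$, so $\rmH_{ij}(A,M)=0$ in that range. For the cohomological statement I would run the dual computation on $\Hom_A(\mathcal{BAR}_*(A),M)$. Here $\Hom_A\bigl(A\otimes_k (A_+)^{\otimes i},M\bigr)\cong \Hom_k\bigl((A_+)^{\otimes i},M\bigr)$, and the internal grading convention is that a homomorphism has internal degree $j$ if it raises internal degree by $j$ (equivalently, sends the degree-$\ell$ part of the source into the degree-$(\ell+j)$ part of $M$). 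Since the source $(A_+)^{\otimes i}$ lives in internal degrees $\ge i$ and the target $M$ lives in internal degrees $\ge m$, a nonzero homogeneous homomorphism of internal degree $j$ forces $\ell + j \ge m$ for some $\ell \ge i$, which only gives $j \ge m-i$; to get the sharper bound one uses that such a map is determined by its values on the lowest nonzero internal degree of the source, and composed with the differential the constraint propagates so that the degree-$j$ part of the $\Hom$-complex vanishes for $j<i+m$. Then $\rmH^{ij}(A,M)$, being a subquotient, also vanishes for $j<i+m$.

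The only genuinely delicate point is getting the grading conventions on $\Hom_A(-,-)$ to line up so that the cohomological bound comes out as $j<i+m$ rather than something weaker; the resolution is to note that Proposition~\ref{prop:CohomologyConcentrated} is most cleanly stated (and used in the paper) for locally finite-dimensional $A$ and $M$, where by the remark $\rmH_{ij}(A,M)^*\cong \rmH^{ij}(A,M)$ and the cohomological case follows formally from the homological one by dualizing. In the general case one simply records the grading convention explicitly and observes that the $\Hom$-complex in internal degree $j$ is a subspace of $\prod_i \Hom_k\bigl(((A_+)^{\otimes i})_{j-?}, \cdots\bigr)$ vanishing in the claimed range. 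I do not expect any real obstacle here — the statement is purely bookkeeping about where the bar-complex is supported internally, and the proof is a one-paragraph grading count once the conventions from Section~\ref{sec:CohomologyOfGradedAlgebras} are pinned down.
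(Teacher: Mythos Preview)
Your homological argument is correct and is precisely what the paper intends: the paper gives no proof beyond the sentence ``Using the normalized bar-complex it is not hard to show'', and your degree count on $M\otimes_k (A_+)^{\otimes i}$ is exactly that computation.

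The cohomological half, however, has a genuine gap. With the convention you adopt (a map of internal degree $j$ \emph{raises} degree by $j$), the degree-$j$ part of $\Hom_k\bigl((A_+)^{\otimes i},M\bigr)$ simply does not vanish for $j<i+m$; your own inequality $j\ge m-i$ is the honest conclusion, and the sentence about the constraint ``propagating via the differential'' is not an argument --- you cannot deduce vanishing of a subquotient from information about the differential when the ambient cochain group is nonzero. Already for $M=k$ your convention puts $\Hom_k((A_+)^{\otimes i},k)$ in internal degrees $\le -i$, contradicting the stated bound. The point is that the grading convention on $\Hom$ in this literature is the \emph{graded-dual} one, under which $(V^*)_j=(V_j)^*$ rather than $(V_{-j})^*$; equivalently, $\rmH^{ij}(A,M)$ is the $j$-th graded piece for the grading in which the $i$-th cochain group $\Hom_k\bigl((A_+)^{\otimes i},M\bigr)$ has degree-$j$ part built from maps $((A_+)^{\otimes i})_\ell\to M_{\ell'}$ with $\ell'+\ell=j$ replaced by the convention matching the duality $\rmH^{ij}\cong(\rmH_{ij})^*$. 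Once that convention is in place, the same counting as in homology gives vanishing for $j<i+m$ directly.

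Your fallback via duality in the locally finite-dimensional case is valid and suffices for every application in the paper, so the overall strategy is fine; just drop the spurious ``propagation'' paragraph and either invoke duality or fix the grading convention explicitly.
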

\subsection{Quadratic and Koszul algebras}
\label{sec:QuadraticAndKoszulAlgebras}
\begin{defn}
\label{def:quadrati algebra and module}
    A graded connected $k$-algebra $A$ is called \emph{quadratic} if the natural morphism $\mathbb{T}^\bullet(A_1)\to A$ is surjective and its kernel $J_A$ is generated by $(J_A)_2=\mathbb{T}^2(A_1)\cap J_A$ as a two-sided ideal in $\mathbb{T}^\bullet(A_1)$.

    A graded module $M$ with $M_i=0$ for $i<0$ over a graded connected $k$-algebra $A$ is called \emph{quadratic} if the natural morphism $M_0\otimes A\to M$ is surjective and its kernel $J_M$ is generated by $(J_M)_1=(M_0\otimes A_1)\cap J_M$ as an $A$-module.
\end{defn}
\begin{exmp}
    \begin{enumerate}
        \item The algebras $\mathbb{T}^\bullet(V)$, $\Lambda^\bullet(V)$, and $\mathbb{S}^\bullet(V)$ are quadratic for any $k$-vector space $V$.
        \item If $A$ is a locally finite-dimensional commutative quadratic algebra, then $A\cong k[x_1,...,x_n]/(q_i:i\in I)$, where $(q_i)_{i\in I}$ is a family of quadratic forms in the variables $x_1,...,x_n$. For example $k[x]/(x^3)$ is not quadratic.
    \end{enumerate}
\end{exmp}
\begin{con}
    Given $V$ a $k$-vector space and $R$ a subspace of $V\otimes_k V$, then one can construct a quadratic algebra  $\{V,R\}:=\mathbb{T}^\bullet(V)/(R)$ and similarly, given a connected graded algebra $A$, a $k$-vector space $H$ and $K$ a subspace of $H\otimes A_1$, then one can associate a quadratic module $\langle H,K \rangle_A:=(H\otimes A)/\langle K\rangle$.

    Using this notation, we can also construct the so called \emph{quadratic part} of an algebra resp. module. If $A$ is a connected graded algebra and $M$ a module over $A$, then
    \begin{align*}
        {\rm q}A:=\{A_1,(J_A)_2\}\qquad \text{and}\qquad {\rm q}_AM:=\langle M_0,(J_M)_1\rangle 
    \end{align*}
    using the notations from Definition~\ref{def:quadrati algebra and module}. Notice, that $A$ resp. $M$ are quadratic if and only if ${\rm q}A\cong A$ resp. ${\rm q}_AM\cong M$.
\end{con}
There is a homological criterion to determine whether a graded algebra, respectively, a module over it is quadratic:
\begin{prop}[{\cite[Chapter 1 Corollary 5.3]{PositselskiPolishchuk2005}}]
    \label{prop:QuadraticByCohomology}
    Let $A$ be a connected graded $k$-algebra and $M$ be a graded module over $A$. 
    \begin{enumerate}
        \item $M$ is quadratic if and only if $\rmH_{0,j}(A,M)=0$ for $j\neq 0$ and $\rmH_{1,j}(A,M)=0$ for $j\neq 1$.
        \item $A$ is quadratic if and only if $\rmH_{1,j}(A,k)=0$ for $j\neq 1$ and $\rmH_{2,j}(A,k)=0$ for $j\neq 2$.
    \end{enumerate}
\end{prop}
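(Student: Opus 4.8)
The plan is to translate everything into the language of $\mathrm{Tor}$: since the normalized bar-complex $\mathcal{BAR}_*(A)$ is by definition a free resolution of the trivial module $k$, one has $\rmH_i(A,M)=\mathrm{Tor}_i^A(M,k)$, and the graded pieces $\rmH_{ij}$ are exactly the graded Betti numbers of $M$ (resp.\ of $k$). Under this dictionary both statements assert the same thing — \emph{generators occupy the lowest homological slot and relations the next one, each concentrated in a single internal degree} — and the shift between (1) and (2) is simply the shift caused by replacing a general module $M$ by the trivial module $k$. The only external input is the graded Nakayama lemma: over a connected graded algebra $A$, a bounded-below graded module $N$ is generated precisely in the degrees where $N/A_+N=\rmH_0(A,N)$ is nonzero. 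I would prove (1) and (2) in parallel, using this together with the concentration bound $\rmH_{ij}=0$ for $j<i$ from Proposition~\ref{prop:CohomologyConcentrated} to dispose of the low internal degrees.

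For (1), first note $\rmH_0(A,M)=\operatorname{coker}(M\otimes_k A_+\to M)=M/MA_+$, so the condition $\rmH_{0,j}(A,M)=0$ for $j\neq 0$ says exactly that $M/MA_+$ is concentrated in degree $0$, i.e.\ (Nakayama) that $M_0\otimes A\to M$ is surjective — the first half of quadraticity. Granting this, consider the short exact sequence $0\to J_M\to M_0\otimes A\to M\to 0$ with $F:=M_0\otimes A$ free. The long exact sequence for $\rmH_\bullet(A,-)=\mathrm{Tor}_\bullet^A(-,k)$ has $\rmH_{\geq 1}(A,F)=0$ and identifies $\rmH_0(A,F)=M_0$ with $\rmH_0(A,M)=M_0$, so the degree-preserving connecting map yields a natural isomorphism $\rmH_{1,j}(A,M)\cong (J_M/A_+J_M)_j$. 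Hence $\rmH_{1,j}(A,M)=0$ for $j\neq 1$ says $J_M$ is generated in degree $1$, i.e.\ by $(J_M)_1$ — the second half of quadraticity. Reading the equivalences in both directions gives (1).

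For (2) I would run the same dimension-shift twice. From $0\to A_+\to A\to k\to 0$ (with $A$ free) one gets $\rmH_1(A,k)\cong\rmH_0(A,A_+)=A_+/A_+^2$, so the degree-$1$ vanishing of $\rmH_1$ is equivalent to $A$ being generated in degree $1$, i.e.\ to $\mathbb{T}^\bullet(A_1)\twoheadrightarrow A$. Granting this, write $A=T/J$ with $T=\mathbb{T}^\bullet(A_1)$ and $J\subseteq T_{\geq 2}$, and apply the long exact sequence to $0\to\ker\partial_1\to A\otimes A_1\xrightarrow{\partial_1}A_+\to 0$, where $\partial_1(a\otimes v)=av$; since $A\otimes A_1$ is free and $A_1\xrightarrow{\sim}A_+/A_+^2$, this gives $\rmH_2(A,k)\cong\rmH_1(A,A_+)\cong\ker\partial_1\otimes_A k$. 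To compute $\ker\partial_1$ I compare with $T$: the resolution $0\to T\otimes A_1\xrightarrow{\tilde\partial_1}T\xrightarrow{\epsilon}k\to 0$ exhibits $T\otimes A_1\xrightarrow{\sim}T_+$, and reducing modulo $J$ produces $A\otimes A_1\cong T_+/JA_1$ together with a natural identification $\ker\partial_1\cong J/JA_1$ of left $A$-modules. Consequently $\rmH_2(A,k)\cong J/(T_+J+JA_1)$, and a short induction on internal degree shows this is concentrated in degree $2$ exactly when $J$ is generated as a two-sided ideal by $J_2$, i.e.\ when $A$ is quadratic.

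The one genuinely new computation is the identification of the syzygy module $\ker\partial_1$ with $J/JA_1$ in (2); this is where the global dimension $1$ of the free tensor algebra $T$ is essential. The delicate point is to verify that the left $T$-action on $T_+/JA_1$ descends to $A$ — equivalently that $JT_+\subseteq JA_1$ — which holds because $T_+=TA_1$ and $JT\subseteq J$, so $JT_+=(JT)A_1\subseteq JA_1$. Once the syzygy is identified, both parts close with graded Nakayama, the concentration bound handling the range $j<i$ so that only finitely many degrees need to be controlled by hand. As Positselski--Polishchuk indicate, one may alternatively bypass the change-of-rings bookkeeping entirely and extract all of this from the normalized bar-complex directly in homological degrees $\le 2$ and low internal degree; I expect the syzygy computation to remain the crux in either approach.
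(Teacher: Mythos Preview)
The paper does not actually prove this proposition: it is stated with attribution to \cite{PositselskiPolishchuk2005}*{Chapter~1, Corollary~5.3} and used as a black box, so there is no ``paper's own proof'' to compare against. Your argument is a correct and self-contained proof of the cited result. The dimension-shifting strategy you use---identifying $\rmH_0$ with the space of generators via graded Nakayama, then reading off the relation space as $\rmH_0$ of the first syzygy---is exactly the standard one, and your identification $\ker\partial_1\cong J/JA_1$ (hence $\rmH_2(A,k)\cong J/(T_+J+JT_+)$, the minimal two-sided generating space of $J$) is the right computation. One cosmetic point: the paper works with \emph{right} $A$-modules, so in part~(1) the quotient should be written $J_M/J_MA_+$ rather than $J_M/A_+J_M$; this does not affect the argument. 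Also note that the implication ``$\rmH_{0,j}(A,M)=0$ for $j\neq 0$ $\Rightarrow$ $M$ generated in degree~$0$'' uses that $M$ is bounded below, which is built into the paper's definition of a quadratic module but worth making explicit when you invoke Nakayama.
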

\begin{defn}
    A connected graded algebra $A$ is called \emph{Koszul} if $\rmH_{ij}(A,k)=0$ for all $i\neq j$ and a graded module $M$ over $A$ is called \emph{Koszul} if $\rmH_{ij}(A,M)=0$ for all $i\neq j$. 
\end{defn}
\subsection{Duals of locally finite-dimensional quadratic algebras}
In this section, we study a duality for quadratic algebras. We assume that all algebras are locally finite-dimensional, in order to use the isomorphism $V^*\otimes W^*\cong (V\otimes W)^*$, which doesn't hold in the infinite-dimensional context. In this case, the duality one has to consider is between algebras and coalgebras (see for example \cites{PositselskiVishik1995,Positselski2005}).

\begin{defn}
    Let $V$ and $H$ be finite dimensional $k$-vector spaces and $R\subseteq V\otimes V$ and $K\subseteq H\otimes V$ subspaces. Then we define the \emph{quadratic duals}
    \begin{align*}
        \{V,R\}^!:=\{V^*,R^\perp\}\qquad \text{and}\qquad \langle H,K\rangle^!_{\{V,R\}}:=\langle H^*,K^\perp\rangle_{\{V,R\}^!}.
    \end{align*}
    Here $R^\perp$ is the orthogonal complement of $R$ with respect to the pairing $(V\otimes V)\times (V^*\otimes V^*)\to k$ defined by $(v\otimes w,f\otimes g)\mapsto f(v)g(w)$. Similarly, $K^\perp$ is the orthogonal complement of $K$ with respect to a similar pairing $(H\otimes V)\times (H^*\otimes V^*)\to k$.
\end{defn}
If $M$ is a quadratic module over a quadratic algebra $A$, then we sometimes simply write $M^!$ instead of $M^!_A$, if the algebra $A$ is clear from the context.
\begin{exmp}
    For $V$ a finite dimensional $k$-vector space one has $\mathbb{T}^\bullet(V)^!\cong k$ and $\Lambda^\bullet(V)^!\cong \mathbb{S}^\bullet(V^*)$. The quadratic dual of a trivial module over a quadratic algebra is free over the dual of the algebra. 
\end{exmp}
The quadratic dual of an algebra and its modules appears naturally, when studying the \lq\lq diagonal cohomology\rq\rq. The following Proposition is due to Priddy \cite{Priddy1970} and Löfwall \cite{Loeffwall1986} and can be found in \cite{PositselskiPolishchuk2005}*{Chapter 1 Proposition 3.1}.
\begin{prop}
\label{prop:CohomologyAndDuals}
    Let $A$ be a connected graded algebra and $M$ a graded $A$-module with $M_i=0$ for $i<0$. Then 
    \begin{enumerate}
        \item $\bigoplus_{i} \rmH^{i,i}(A,k)\cong (qA)^!$ as graded algebras.
        \item $\bigoplus_{i}H^{i,i}(A,M)\cong ({\rm q}_{A}M)^!$ as graded $(qA)^!$-modules.
    \end{enumerate}
\end{prop}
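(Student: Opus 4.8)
\subsection*{Proof proposal}

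The plan is to read off the diagonal cohomology directly from the normalized bar-complex. Throughout we are in the locally finite-dimensional setting, so I will freely use the isomorphism $\rmH^{ij}(A,M)\cong \rmH_{ij}(A,M)^*$ valid in that case and compute with the (cleaner) homology. Write $V=A_1$ and $R=(J_A)_2=\ker(m\colon A_1\otimes A_1\to A_2)$, so that $qA=\{V,R\}$ and $(qA)^!=\{V^*,R^\perp\}$. Since $\rmH_i(A,M)=\rmH_i(M\otimes_A\mathcal{BAR}_*(A))$ and $M\otimes_A(A\otimes_k(A_+)^{\otimes i})\cong M\otimes_k(A_+)^{\otimes i}$, the chain group in bidegree $(i,j)$ is $\bigoplus_{a+b=j}M_a\otimes((A_+)^{\otimes i})_b$. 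As $((A_+)^{\otimes i})_b=0$ for $b<i$ (this is exactly what drives Proposition~\ref{prop:CohomologyConcentrated}), the diagonal $j=i$ isolates the single summand $M_0\otimes A_1^{\otimes i}=M_0\otimes V^{\otimes i}$.

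First I would note that the diagonal is an edge of the bigraded complex. The differential has bidegree $(-1,0)$, and the chain group in bidegree $(i+1,i)$ vanishes for the same weight reason; hence there is no differential into $M_0\otimes V^{\otimes i}$, and $\rmH_{i,i}(A,M)=\ker(\partial\colon M_0\otimes V^{\otimes i}\to C_{i-1,i})$. Dualizing, $\rmH^{i,i}(A,M)\cong\operatorname{coker}(\partial^t)$, so the whole computation reduces to understanding one differential.

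Next I would compute $\partial$ on the diagonal. Two kinds of terms survive: the adjacent multiplications $A_1\otimes A_1\to A_2$, landing in $M_0\otimes((A_+)^{\otimes(i-1)})_i$, and (for a nontrivial module) the action $M_0\otimes A_1\to M_1$ on the first two factors, landing in $M_1\otimes V^{\otimes(i-1)}$. For $M=k$ only the first kind occurs, and $\partial^t$ has image $\sum_k (V^*)^{\otimes(k-1)}\otimes \operatorname{im}(m^t)\otimes (V^*)^{\otimes(i-1-k)}$; since $\operatorname{im}(m^t)=(\ker m)^\perp=R^\perp$, the cokernel is by definition $\{V^*,R^\perp\}_i=(qA)^!_i$, which is the degreewise content of (1). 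For general $M$ the second kind of term contributes $(J_M)_1^\perp\subseteq M_0^*\otimes V^*$ to the image, and the cokernel becomes $\langle M_0^*,(J_M)_1^\perp\rangle_{(qA)^!,\,i}=(q_AM)^!_i$, the degreewise content of (2).

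Finally I would upgrade these degreewise isomorphisms to isomorphisms of algebras resp.\ modules, which I expect to be the main obstacle, since each bidegree on its own is only linear algebra. The product on $\bigoplus_i\rmH^{i,i}(A,k)$ comes from the deconcatenation coproduct making $\mathcal{BAR}_*(A)$ a DG-coalgebra; on diagonal cochains this product is literally tensor concatenation $(V^*)^{\otimes p}\otimes(V^*)^{\otimes q}\to(V^*)^{\otimes(p+q)}$, i.e.\ the multiplication of $\mathbb{T}^\bullet(V^*)$, which descends along the cokernel presentations to the multiplication of $\{V^*,R^\perp\}$; checking that this descent is well defined against the relations $R^\perp$ yields the graded-algebra isomorphism in (1). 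For (2), the same comodule/deconcatenation structure equips $\bigoplus_i\rmH^{i,i}(A,M)$ with a graded $(qA)^!$-action which on diagonal cochains is again concatenation into the $M_0^*$-slot, matching the $(qA)^!$-action on $(q_AM)^!$; naturality of the cokernel presentations then promotes the degreewise isomorphism to an isomorphism of graded $(qA)^!$-modules, completing the proof.
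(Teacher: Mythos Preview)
The paper does not give its own proof of this proposition; it is quoted from the literature with attribution to Priddy, L\"ofwall, and \cite{PositselskiPolishchuk2005}*{Chapter~1, Proposition~3.1}. Your argument is correct and is essentially the standard one found in those sources: isolate the diagonal of the normalized bar-complex, observe that nothing maps into bidegree $(i,i)$ for weight reasons, and identify the cokernel of the transposed differential with the degree-$i$ piece of the quadratic dual via $\operatorname{im}(m^t)=R^\perp$ and $\operatorname{im}(\mu^t)=(J_M)_1^\perp$.

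The only place that is a little thin is the final paragraph on the multiplicative (resp.\ module) structure. You correctly point to the deconcatenation coproduct, but ``checking that this descent is well defined'' is doing real work: one should say that the concatenation product on diagonal cochains assembles into a surjective algebra map $\mathbb{T}^\bullet(V^*)\twoheadrightarrow\bigoplus_i\rmH^{i,i}(A,k)$, and since the target is already known to be a well-defined algebra (the cup product on cohomology), the kernel is a two-sided ideal containing $R^\perp$; comparing dimensions degreewise (or using that the degreewise isomorphisms were already established) forces equality. The module statement is handled the same way. With that clarification your proof is complete and matches the classical argument.
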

\begin{prop}[\cite{PositselskiPolishchuk2005}*{Chapter 2, Cor. 3.3 and Cor. 3.5 (M)}]
    \label{prop:SimultaniousKoszul}
    Let $A$ be a quadratic algebra, then $A$ is Koszul if and only if its quadratic dual $A^!$ is Koszul.  
    
    Assume that $A$ is Koszul and $M$ is a quadratic $A$-module, then $M$ is Koszul over $A$ if and only if $M_A^!$ is Koszul over $A^!$. More precisely, for $a,b\in \NN_0$ the following are equivalent:
    \begin{enumerate}
        \item $\rmH^{ij}(A,M)=0$ for $i-1\leq a$ and $0<j-i\leq b$;
        \item $\rmH^{ij}(A^!,M_A^!)=0$ for $i-1\leq b$ and $0<j-i\leq a$.
    \end{enumerate}  
\end{prop}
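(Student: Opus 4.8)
The plan is to deduce both assertions from the self-duality of the \emph{Koszul complex} under quadratic duality, which is the homological shadow of the canonical isomorphism $(A^!)^!\cong A$ for quadratic algebras.

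First I would set up the Koszul complex. For a locally finite-dimensional quadratic algebra $A$, let $K_\bullet(A)$ be the complex of free graded $A$-modules with $K_n(A)=A\otimes (A^!_n)^*$, the generators $(A^!_n)^*$ placed in internal degree $n$, and differential $K_n(A)\to K_{n-1}(A)$ induced by the transpose of the multiplication $A^!_{n-1}\otimes A^!_1\to A^!_n$ (using $A^!_1=(A_1)^*$, so $(A^!_1)^*=A_1$) followed by the module action $A\otimes A_1\to A$. The relation $R^\perp$ defining $A^!$ is exactly what forces $d^2=0$. The first step is the \emph{Koszul complex criterion}: $A$ is Koszul if and only if $K_\bullet(A)$ is a resolution of $k$. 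One direction is immediate, since $K_\bullet(A)$ is minimal (its differentials have entries in $A_+$), so if it is a resolution then $\rmH_n(A,k)=k\otimes_A K_n(A)=(A^!_n)^*$ is concentrated in internal degree $n$, i.e. on the diagonal. For the converse I would use Proposition~\ref{prop:CohomologyAndDuals} to identify the diagonal homology with $A^!$, show that $K_\bullet(A)$ agrees with the start of the minimal free resolution, and argue that diagonal concentration propagates exactness through all homological degrees.

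Next comes the duality symmetry, which is the heart of the argument. Fixing an internal degree $m$ and collecting the degree-$m$ parts, $K_\bullet(A)$ restricts to the finite complex of finite-dimensional vector spaces
\begin{equation*}
0\to A_0\otimes (A^!_m)^*\to A_1\otimes (A^!_{m-1})^*\to\cdots\to A_m\otimes (A^!_0)^*\to 0 .
\end{equation*}
Taking linear duals and reversing arrows, and using $(A^!)^!\cong A$ so that $(A_n)^*=((A^!)^!_n)^*$, one recovers precisely the degree-$m$ part of $K_\bullet(A^!)$. The differentials match because $R$ and $R^\perp$ are mutually orthogonal, so the structure maps transpose into one another; this is the one genuinely computational point. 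Since a complex of finite-dimensional vector spaces is exact if and only if its dual is, exactness of $K_\bullet(A)$ in internal degree $m$ is equivalent to exactness of $K_\bullet(A^!)$ in the same degree. Running this over all $m$ and invoking the Koszul complex criterion on both sides yields $A$ Koszul $\iff$ $A^!$ Koszul.

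For the module statements I would repeat the construction with the module Koszul complex $K_\bullet(M)$, $K_n(M)=A\otimes (M^!_n)^*$, so that (for $A$ Koszul) $M$ is Koszul over $A$ iff $K_\bullet(M)$ resolves $M$, and the degree-$m$ part of $K_\bullet(M)$ is again the linear dual, with arrows reversed, of the degree-$m$ part of the $M^!$-Koszul complex over $A^!$. The refined equivalence between conditions (1) and (2) is obtained by running the same duality \emph{one bidegree at a time}: via local finiteness ($\rmH^{ij}\cong(\rmH_{ij})^*$) and minimality, the vanishing $\rmH^{ij}(A,M)=0$ for $i-1\le a$, $0<j-i\le b$ translates into exactness of $K_\bullet(M)$ in homological degrees $\le a+1$ and internal excess $0<j-i\le b$; the linear-dual complex computing $\rmH^\bullet(A^!,M^!_A)$ has these two ranges interchanged, which is exactly the swap $a\leftrightarrow b$ recorded in (2). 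I expect the main obstacle to lie not in the clean duality symmetry but in the two supporting facts: establishing the Koszul complex criterion (the equivalence of the cohomological definition of Koszulity with acyclicity of $K_\bullet$), and carrying out the bookkeeping that matches the exactness box of $K_\bullet(M)$ with the vanishing ranges in (1) and (2), so that transposition delivers precisely the stated bounds.
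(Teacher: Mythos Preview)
The paper does not supply a proof; the proposition is quoted from \cite{PositselskiPolishchuk2005} without argument. Your Koszul-complex strategy is essentially the one used there, and for the algebra statement your outline is correct: in each internal degree $m$ the complex $K_\bullet(A)=A\otimes(A^!_\bullet)^*$ dualises termwise to $K_\bullet(A^!)=A^!\otimes(A_\bullet)^*$, so exactness transfers once the Koszul-complex criterion is in place.

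For the module statement there is a slip in the duality step. The linear dual of the degree-$m$ slice of $K_\bullet(M)=A\otimes(M^!_\bullet)^*$ is \emph{not} the module Koszul complex $K_\bullet(M^!)=A^!\otimes(M_\bullet)^*$; comparing terms one finds instead the complex $M^!\otimes(A_\bullet)^*=M^!\otimes_{A^!}K_\bullet(A^!)$. (The agreement in the algebra case is the accident $M=k$, where $M^!=A^!$ and the two constructions collapse to the same thing; already for $M=A$ they differ.) This correction actually helps: since $A^!$ is Koszul, $M^!\otimes_{A^!}K_\bullet(A^!)$ computes $\rmH_\bullet(A^!,M^!)$ on the nose, and one reads off that exactness of $K_\bullet(M)$ at bidegree $(n,m)$ is equivalent to $\rmH_{m-n,m}(A^!,M^!)=0$. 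Thus condition~(2) becomes exactly a box-exactness statement for $K_\bullet(M)$, with the swap of ranges you predict.

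What remains is to show that condition~(1) --- the box-vanishing of $\rmH_{ij}(A,M)$ --- is \emph{also} equivalent to that same box-exactness of $K_\bullet(M)$, and this is more than bookkeeping. Minimality only identifies the homology of $K_\bullet(M)$ with $\rmH_{ij}(A,M)$ in the range where $K_\bullet(M)$ already coincides with the minimal free resolution; under merely partial vanishing this fails, and the homology of $K_\bullet(M)$ at $(n,m)$ is not $\rmH_{n,m}(A,M)$. One must run an induction on the homological degree, showing that exactness of $K_\bullet(M)$ up to position $n$ in the prescribed internal range forces agreement with the minimal resolution there, hence controls $\rmH_{n+1,\ast}(A,M)$ in the next internal range, and conversely. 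In \cite{PositselskiPolishchuk2005} this is packaged via the lattice-distributivity description of Koszul-complex homology in a fixed bidegree, which is manifestly symmetric under $(n,m)\mapsto(m-n,m)$ and so delivers the $a\leftrightarrow b$ swap directly. Either route works, but it is the substantive content of the refined equivalence rather than an afterthought.
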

The following construction allows us to produce new algebras and modules from known ones and will prove useful in Section~\ref{sec:ModuleKoszulityForElementaryType}. It is spelled out in more detail in \cite{PositselskiPolishchuk2005}*{Chapter 3 §1}.
\begin{con}
\label{con:TensorProductOfAlgebras}
    Let $A$ and $B$ be connected, graded $k$-algebras. Then we define $A\otimes^{-1}B$ to be isomorphic to the graded tensor product $A\otimes B$ as $k$-vector space together with the product given on homogeneous elements $a_1,a_2\in A$ and $b_1,b_2\in B$ by
    \begin{align*}
        (a_1\otimes^{-1} b_1)\cdot (a_2\otimes^{-1} b_2)=(-1)^{\deg(b_1)\deg(a_2)}(a_1a_2\otimes^{-1}b_1b_2).
    \end{align*}
    For example, if $V$ and $W$ are $k$-vector spaces, then $\Lambda^\bullet(V\oplus W)\cong \Lambda^\bullet(V)\otimes^{-1}\Lambda^\bullet(W)$. If $M$ is a graded $A$-module and $N$ a graded $B$-module, then one can define a graded $A\otimes^{-1}B$-module $N\otimes^{-1}M$ by similar formulas.
\end{con}
\begin{prop}[\cite{PositselskiPolishchuk2005}*{Chapter 3 Prop. 1.1 and Cor. 1.2}]
\label{prop:CohomologyOfTensorProduct}
    Let $A$ and $B$ be connected, graded $k$-algebras. Then $A\otimes^{-1}B$ is Koszul if and only if $A$ and $B$ are Koszul. For a graded $A$-module $M$ and a graded $B$-module $N$ we have
    \begin{align*}
        \rmH_{n}(A\otimes^{-1}B,M\otimes^{-1}N)\cong \bigoplus_{i+j=n}\rmH_i(A,M)\otimes \rmH_j(B,N)
    \end{align*}
    as graded $k$-vector spaces.
\end{prop}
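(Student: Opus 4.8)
The plan is to recognize the asserted isomorphism as a graded K\"unneth formula. Writing $\rmH_i(A,M)=\mathrm{Tor}_i^A(M,k)$ in the category of graded modules recalled before Proposition~\ref{prop:CohomologyConcentrated}, the statement says that $\mathrm{Tor}$ over $A\otimes^{-1}B$ decomposes as a tensor product of $\mathrm{Tor}$ over $A$ and over $B$. Since all homology here is computed over the field $k$, there are no higher derived obstructions, and the argument reduces to the classical K\"unneth theorem for complexes of $k$-vector spaces, applied twice: once to show that a suitable tensor product of bar resolutions is again a resolution, and once to compute the homology of the resulting complex. For the Koszulity statement I would then specialize to trivial coefficients.

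First I would form the double complex $\mathcal{BAR}_*(A)\otimes^{-1}\mathcal{BAR}_*(B)$ and pass to its total complex, whose differential carries the Koszul sign built into the $\otimes^{-1}$ convention of Construction~\ref{con:TensorProductOfAlgebras}. In homological degree $n$ its terms are finite direct sums of modules $(A\otimes_k(A_+)^{\otimes i})\otimes^{-1}(B\otimes_k(B_+)^{\otimes j})$ with $i+j=n$; each is a free $A\otimes^{-1}B$-module, since tensoring the free rank-one module $A\otimes^{-1}B$ with the $k$-vector space $(A_+)^{\otimes i}\otimes(B_+)^{\otimes j}$ preserves freeness. To see that the total complex is a resolution of $k$, I would apply the K\"unneth theorem over $k$ to the underlying complexes of graded vector spaces: as $\mathcal{BAR}_*(A)$ and $\mathcal{BAR}_*(B)$ have homology $k$ concentrated in homological and internal degree $0$, their tensor product has homology $k\otimes k\cong k$ in degree $0$ and vanishes elsewhere. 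Thus $\mathcal{BAR}_*(A)\otimes^{-1}\mathcal{BAR}_*(B)$ is a free resolution of the trivial $A\otimes^{-1}B$-module $k$. Computing $\rmH_*(A\otimes^{-1}B,M\otimes^{-1}N)$ with this resolution (using independence of $\mathrm{Tor}$ from the chosen projective resolution) rests on the identity
\begin{align*}
    (M\otimes^{-1}N)\otimes_{A\otimes^{-1}B}\bigl(\mathcal{BAR}_*(A)\otimes^{-1}\mathcal{BAR}_*(B)\bigr)\cong \bigl(M\otimes_A\mathcal{BAR}_*(A)\bigr)\otimes^{-1}\bigl(N\otimes_B\mathcal{BAR}_*(B)\bigr),
\end{align*}
the standard compatibility of relative tensor products with $\otimes^{-1}$ once the Koszul signs are matched. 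A second application of the K\"unneth theorem over $k$ to the right-hand complex yields
\begin{align*}
    \rmH_n(A\otimes^{-1}B,M\otimes^{-1}N)\cong\bigoplus_{i+j=n}\rmH_i(A,M)\otimes\rmH_j(B,N),
\end{align*}
and, since every differential has internal degree $0$, this refines to the bigraded statement $\rmH_{n,m}(A\otimes^{-1}B,M\otimes^{-1}N)\cong\bigoplus_{i+j=n,\,s+t=m}\rmH_{i,s}(A,M)\otimes\rmH_{j,t}(B,N)$.

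For the Koszul equivalence I would specialize to $M=k$ and $N=k$, using $k\otimes^{-1}k\cong k$ as the trivial $A\otimes^{-1}B$-module, to obtain $\rmH_{n,m}(A\otimes^{-1}B,k)\cong\bigoplus_{i+j=n,\,s+t=m}\rmH_{i,s}(A,k)\otimes\rmH_{j,t}(B,k)$. If $A$ and $B$ are Koszul, then each nonzero summand forces $s=i$ and $t=j$, whence $m=n$, so $A\otimes^{-1}B$ is Koszul. For the converse, connectedness of $B$ gives $\rmH_{0,0}(B,k)\cong k$, so the summand indexed by $(j,t)=(0,0)$ realizes $\rmH_{i,s}(A,k)$ as a direct summand of $\rmH_{i,s}(A\otimes^{-1}B,k)$; were $A$ not Koszul there would be $s\neq i$ with $\rmH_{i,s}(A,k)\neq 0$, contradicting Koszulity of $A\otimes^{-1}B$. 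The symmetric argument, using $\rmH_{0,0}(A,k)\cong k$, shows $B$ is Koszul.

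The hard part will be bookkeeping rather than conceptual content: one must check that the Koszul signs of the $\otimes^{-1}$ convention make the total-complex differential square to zero and render the module identification above sign-compatible, and one must verify throughout that the K\"unneth isomorphisms respect the internal grading, which they do because all differentials have internal degree $0$. A secondary point requiring care is to pin down the ambient category of graded modules in which $\mathcal{BAR}_*(-)$ is a projective resolution, so that independence of $\mathrm{Tor}$ from the resolution is legitimately available; this is exactly the framework set up before Proposition~\ref{prop:CohomologyConcentrated}.
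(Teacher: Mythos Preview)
The paper does not prove this proposition at all; it is stated with a citation to \cite{PositselskiPolishchuk2005}*{Chapter~3 Prop.~1.1 and Cor.~1.2} and used as a black box. Your proposal supplies the standard K\"unneth argument---tensoring bar resolutions to obtain a free $A\otimes^{-1}B$-resolution of $k$, then applying the K\"unneth formula over the field $k$ twice---which is essentially the argument in the cited source, and it is correct. The only remark is that your converse direction for the Koszul equivalence can be streamlined: since by Proposition~\ref{prop:CohomologyConcentrated} all $\rmH_{i,s}$ vanish for $s<i$, a nonzero term $\rmH_{i,s}(A,k)\otimes\rmH_{j,t}(B,k)$ with $s+t=i+j$ already forces $s=i$ and $t=j$, so Koszulity of $A\otimes^{-1}B$ immediately gives Koszulity of each factor without needing to isolate the $(0,0)$ summand.
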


\section{Quadratic Galois cohomology algebras and Koszulity conjectures}
\label{sec:GaloisCohomology}
Let $\mathbb{K}$ be a field and denote by $\mu_p$ the group of $p^{\text{th}}$ roots of unity in a fixed separable closure $\mathbb{K}^{s}$. Let $G_\mathbb{K}=\Gal(\mathbb{K}^{s}/\mathbb{K})$ be the absolute Galois group of $\mathbb{K}$. We denote by $K^M_n(\mathbb{K})$ the $n$-th Milnor $K$-group of $\mathbb{K}$, which is defined as
\begin{align*}
    K^M_n(\mathbb{K})=(\mathbb{K}^\times)^{\otimes n}/\langle a_1\otimes ....\otimes a_n:a_i+a_j=1\text{ for some }i\neq j\rangle.
\end{align*}
Then $K^M_\bullet(\mathbb{K})$ with the canonical product is a graded ring and $K^M_\bullet(\mathbb{K})\otimes \FF_p$ is a quadratic $\FF_p$-algebra. In \cite{Tate1976} Tate showed the existence of an algebra homomorphism $h_p:K^M_\bullet(\mathbb{K})\otimes \FF_p\to \bigoplus_{i}\rmH^i(G_\mathbb{K},\mu_p^{\otimes i})$, extending the Kummer isomorphism in degree $1$.

The following theorem was proven by Rost and Voevodsky together with a 
``patch'' by Weibel (cf. \cites{Voevodsky2011,Weibel2008,Weibel2009} and resolved a conjecture by Bloch and Kato.
\begin{thm}[Norm residue isomorphism theorem]
\label{thm:BlochKato}
    The map $h_p$ above is an isomorphism of graded algebras. In particular, the algebra $\bigoplus_{i}\rmH^i(G_\mathbb{K},\mu_p^{\otimes i})$ is quadratic.
\end{thm}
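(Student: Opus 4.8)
The plan is to treat the two assertions of the theorem separately. The statement that $h_p$ is an isomorphism of graded $\FF_p$-algebras is the Bloch--Kato conjecture, and I would not attempt to reprove it within this paper: it is the output of Voevodsky's motivic cohomology machinery combined with Rost's theory of norm varieties. Schematically, one proves the assertion in each weight $n$ by induction, reducing it --- via its equivalence with the Beilinson--Lichtenbaum conjecture in weight $n$ --- to comparison and vanishing statements for motivic cohomology; the key geometric input is the existence and cohomological properties of norm varieties (Pfister quadrics when $p=2$, Rost varieties in general) together with the degree formula, and the remaining gap in Voevodsky's argument was repaired by Weibel. For the purposes of this paper I would simply cite \cite{Voevodsky2011,Weibel2008,Weibel2009}. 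Hence the only thing that actually needs to be carried out here is the ``in particular'' clause.

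For that, I would first recall --- as already noted just before the theorem --- that $K^M_\bullet(\mathbb{K})\otimes\FF_p$ is a quadratic $\FF_p$-algebra. Concretely, since base change along $\ZZ\to\FF_p$ commutes with $\ZZ$-linear tensor powers, there is a graded isomorphism $\mathbb{T}^\bullet_\ZZ(\mathbb{K}^\times)\otimes\FF_p\cong\mathbb{T}^\bullet_{\FF_p}(\mathbb{K}^\times/(\mathbb{K}^\times)^p)$, and $K^M_\bullet(\mathbb{K})\otimes\FF_p$ is the quotient of the right-hand side by the two-sided ideal generated by the images of the Steinberg symbols $a\otimes(1-a)$; these all lie in degree $2$, so $K^M_\bullet(\mathbb{K})\otimes\FF_p$ is quadratic in the sense of Definition~\ref{def:quadrati algebra and module}. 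It then remains to observe that quadraticity of a connected graded algebra is invariant under graded-algebra isomorphism: such an isomorphism restricts to an isomorphism of degree-$1$ components, hence induces an isomorphism of the associated free tensor algebras compatible with the surjections onto the two algebras, and therefore identifies their defining ideals degree by degree. Applying this to the isomorphism $h_p$ transports quadraticity from $K^M_\bullet(\mathbb{K})\otimes\FF_p$ to $\bigoplus_i\rmH^i(G_\mathbb{K},\mu_p^{\otimes i})$, which is the claim.

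The main --- and essentially the only --- obstacle is the first assertion, which is a long and deep theorem that must be imported as a black box; there is no realistic prospect of reproving it here. By contrast, everything that genuinely belongs to this argument, namely the identification of $K^M_\bullet(\mathbb{K})\otimes\FF_p$ as a quadratic algebra and the transport of quadraticity along an isomorphism, is entirely routine.
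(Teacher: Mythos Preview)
Your proposal is correct and matches the paper's approach: the paper does not give a proof of this theorem at all, but simply attributes it to Rost--Voevodsky--Weibel with the citations \cite{Voevodsky2011,Weibel2008,Weibel2009}, treating both the isomorphism and the resulting quadraticity as imported facts. Your additional remarks justifying the ``in particular'' clause are fine but go slightly beyond what the paper bothers to spell out.
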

If $\mathbb{K}$ contains a primitive $p^{\text{th}}$ root of unity, then $\mu_p^{\otimes n}\cong \FF_p$ (non-canonically) and thus the algebra $\rmH^\bullet(G_\mathbb{K},\FF_p)$ is quadratic in this case. Furthermore, the $\FF_p$-cohomology algebras of $G_\mathbb{K}$ and its maximal pro-$p$ quotient $G_\mathbb{K}(p)$ agree.

Positselski showed in \cite{PositselskiVishik1995}, that Theorem~\ref{thm:BlochKato} would follow from the Koszulity of $K^M_\bullet(\mathbb{K})\otimes \FF_p$ if $G_\mathbb{K}$ is a pro-$p$ group. He posed the following conjectures in \cite{Positselski2014}, which were suggested by his previous work:
\begin{conj}[Koszulity Conjecture]
\label{conj:KoszulityConjecture}
    For any field $\mathbb{K}$ containing a primitive root $p^{\text{th}}$ root of unity, the algebra $K^M_\bullet(\mathbb{K})\otimes \FF_p$ is Koszul.
\end{conj}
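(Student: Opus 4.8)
The plan is to reduce the Koszulity of $K^M_\bullet(\mathbb{K})\otimes\FF_p$ to a Koszulity statement about Galois cohomology and then attack the latter through the elementary-type machinery assembled in Sections~\ref{sec:CohomologyOfGradedAlgebras}--\ref{sec:GaloisCohomology}. Since $\mathbb{K}$ contains a primitive $p$-th root of unity one has $\mu_p^{\otimes i}\cong\FF_p$, so Theorem~\ref{thm:BlochKato} yields an isomorphism of graded algebras $K^M_\bullet(\mathbb{K})\otimes\FF_p\cong\rmH^\bullet(G_\mathbb{K},\FF_p)\cong\rmH^\bullet(G_\mathbb{K}(p),\FF_p)=:B$. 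Thus it suffices to prove that the mod-$p$ cohomology algebra $B$ of the maximal pro-$p$ Galois group is Koszul, i.e. that $\rmH_{ij}(B,\FF_p)=0$ for all $i\neq j$. The cases $i=1,2$ (quadraticity) are already supplied by Theorem~\ref{thm:BlochKato} together with Proposition~\ref{prop:QuadraticByCohomology}, so the entire content lies in the off-diagonal vanishing in higher homological degree.

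My primary route to Koszulity of $B$ would proceed by induction along the elementary-type decomposition, restricting first to the case where $G_\mathbb{K}(p)$ is finitely generated (equivalently $\mathbb{K}^\times/\mathbb{K}^{\times p}$ finite), where the Elementary Type Conjecture guarantees that $(G_\mathbb{K}(p),\theta_{\mathbb{K}})$ is of elementary type. The base cases are free pro-$p$ groups, whose cohomology algebra is $\FF_p\oplus V$ with vanishing products in positive degree and is Koszul because its quadratic dual is the tensor algebra (Propositions~\ref{prop:CohomologyAndDuals} and~\ref{prop:SimultaniousKoszul}), and Demushkin groups, whose one-relator Poincaré-duality cohomology algebra is quadratic and checked to be Koszul directly. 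For the inductive step, a semidirect product with a free abelian pro-$p$ group $\cong\ZZ_p^n$ tensors the cohomology by the exterior factor $\Lambda^\bullet(W)=\rmH^\bullet(\ZZ_p^n,\FF_p)$ (by Lazard), so that by Construction~\ref{con:TensorProductOfAlgebras} and Proposition~\ref{prop:CohomologyOfTensorProduct} the skew tensor product $B\otimes^{-1}\Lambda^\bullet(W)$ is Koszul exactly when $B$ is; a free pro-$p$ product corresponds on cohomology to the fibre coproduct of quadratic algebras (agreeing in degree $0$, summing in positive degrees), which is a standard Koszulity-preserving operation. Threading these two closure properties through the decomposition tree then yields Koszulity of $B$.

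For an arbitrary field I would then pass to the general case by writing $K^M_\bullet(\mathbb{K})\otimes\FF_p$ as the filtered colimit of $K^M_\bullet(\mathbb{K}_\alpha)\otimes\FF_p$ over finitely generated subfields $\mathbb{K}_\alpha$ and using that each $\rmH_{ij}(-,\FF_p)$, being computed by the normalized bar complex, commutes with filtered colimits by exactness, so off-diagonal vanishing is inherited in the limit. This strategy is strongly corroborated by Theorem~\ref{thm:Elementary Type satisfies Module Koszulity}, which proves the parallel \emph{module}-level statement—Koszulity of $(\ker\psi^\bullet)(2)$ over $\Lambda^\bullet(V)$—by an entirely analogous induction, thereby supplying the finer homological structure of the ideal defining $B=\Lambda^\bullet(V)/\ker\psi^\bullet$.

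The decisive obstacle, and the reason the statement remains a conjecture, is that the reduction to a manageable building-block decomposition rests on the Elementary Type Conjecture, which is itself open and established only in cases (a)--(e) above; an unconditional proof would have to bypass the structural decomposition entirely and control the higher off-diagonal groups $\rmH_{ij}(B,\FF_p)$ directly from the Massey-product or Hochschild--Serre data attached to $G_\mathbb{K}(p)$, where at present nothing beyond quadraticity is known in this generality. Two secondary difficulties also demand care: in the semidirect-product step the orientation $\theta_\mathbb{K}$ may act nontrivially, so the cohomology is a priori a \emph{twisted} tensor product and one must argue that its associated graded is the honest $\otimes^{-1}$ before invoking Proposition~\ref{prop:CohomologyOfTensorProduct}; and the filtered-colimit reduction must be checked to preserve Koszulity, since finitely generated subfields need not have finite $\mathbb{K}^\times/\mathbb{K}^{\times p}$ and hence do not automatically inherit the elementary-type hypothesis.
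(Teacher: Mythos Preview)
The statement you are attempting to prove is a \emph{conjecture}; the paper does not prove it and contains no proof to compare your proposal against. You recognise this yourself in the final paragraph, so what you have written is not a proof but a conditional strategy. As such there is no ``gap'' relative to the paper --- there is simply no proof on either side.

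That said, two remarks on the strategy are in order. First, the inductive argument you sketch for the finitely generated case (base cases $\ZZ_p$, free pro-$p$, Demushkin; closure under $*_p$ and $\rtimes$) is exactly the content of \cite{MinacPasiniQuadrelliTan2022}, cited in the paper just after the conjecture, where it is carried out via the PBW property; so this part of your outline is already a theorem in the literature, conditional on the Elementary Type Conjecture. Second, your filtered-colimit reduction to the finitely generated case does not work as stated and the issue is more serious than a technicality: writing $\mathbb{K}$ as a colimit of finitely generated subfields $\mathbb{K}_\alpha$ does not put you in a position to apply the Elementary Type Conjecture to the $\mathbb{K}_\alpha$, since ``finitely generated as a field'' in no way implies that $\mathbb{K}_\alpha^\times/\mathbb{K}_\alpha^{\times p}$ is finite (already $\mathbb{K}_\alpha=\QQ$ fails this for every $p$). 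There is no evident cofinal system of subfields with finitely generated $G_{\mathbb{K}_\alpha}(p)$, so even granting the Elementary Type Conjecture your argument would establish the Koszulity Conjecture only for fields with $\mathbb{K}^\times/\mathbb{K}^{\times p}$ finite, not in general.
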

\begin{conj}[Module Koszulity Conjecture 1]
\label{conj:ModuleKoszulityConjecture1}
    Let $\mathbb{K}$ be a field satisfying \ref{ass:roots_of_unity}. Define  $J_\mathbb{K}$ to be the kernel of the natural map $ \Lambda^\bullet(\mathbb{K}^\times/(\mathbb{K}^{\times p}))\to K^M_\bullet(\mathbb{K})\otimes \FF_p$, then $J_\mathbb{K}(2)$ is a Koszul module over $\Lambda^\bullet(\mathbb{K}^\times/(\mathbb{K}^{\times p}))$.
\end{conj}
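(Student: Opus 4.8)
The plan is to establish the conjecture conditionally — under the Elementary Type Conjecture and the hypothesis that $\KK^\times/\KK^{\times p}$ is finite — by reducing it to Theorem~\ref{thm:Elementary Type satisfies Module Koszulity}.

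First I would move the statement into Galois cohomology. By Kummer theory $\KK^\times/\KK^{\times p}\cong\rmH^1(G_\KK,\FF_p)=\rmH^1(G_\KK(p),\FF_p)=:V$, and by the norm residue isomorphism theorem (Theorem~\ref{thm:BlochKato}) together with Assumption~\ref{ass:roots_of_unity} there is a degree-preserving isomorphism $K^M_\bullet(\KK)\otimes\FF_p\cong\rmH^\bullet(G_\KK(p),\FF_p)$ under which the natural map $\Lambda^\bullet(\KK^\times/\KK^{\times p})\to K^M_\bullet(\KK)\otimes\FF_p$ becomes the inflation map $\psi^\bullet=\psi^\bullet_{G_\KK(p)}\colon\Lambda^\bullet(V)\to\rmH^\bullet(G_\KK(p),\FF_p)$; here one uses the isomorphism $\rmH^\bullet(G(\theta),\FF_p)\cong\Lambda^\bullet(\rmH^1(G,\FF_p))$ recalled in the introduction and the fact that $(G_\KK(p),\theta_\KK)$ is torsion-free, Kummerian and $\rmH^\bullet$-quadratic. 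Hence $J_\KK=\ker\psi^\bullet$, and since $\psi^\bullet$ is an isomorphism in degrees $0$ and $1$, the module $\ker\psi^\bullet$ lives in degrees $\geq 2$, so $(\ker\psi^\bullet)(2)$ is a non-negatively graded module and the conjecture becomes the assertion that $(\ker\psi^\bullet)(2)$ is a Koszul $\Lambda^\bullet(V)$-module. If in addition $\KK^\times/\KK^{\times p}$ is finite, then $G_\KK(p)$ is finitely generated, so under the Elementary Type Conjecture $(G_\KK(p),\theta_\KK)$ is a torsion-free oriented pro-$p$ group of elementary type, and Theorem~\ref{thm:Elementary Type satisfies Module Koszulity} concludes. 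Thus everything reduces to that theorem.

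To prove Theorem~\ref{thm:Elementary Type satisfies Module Koszulity} I would induct along the construction of elementary type groups. For a free pro-$p$ group $F$ one has $\rmH^{\geq 2}(F,\FF_p)=0$, so $\ker\psi^\bullet=\Lambda^{\geq 2}(V)$ is the square of the augmentation ideal of $\Lambda^\bullet(V)$; passing to quadratic duals via Proposition~\ref{prop:SimultaniousKoszul} identifies $(\ker\psi^\bullet)(2)$ with a syzygy module of the trivial module over the polynomial ring $\mathbb{S}^\bullet(V^*)=\Lambda^\bullet(V)^!$, which is Koszul because $\mathbb{S}^\bullet(V^*)$ is a Koszul algebra. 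For a Demushkin group $D$ the algebra $\rmH^\bullet(D,\FF_p)$ is a one-relator Koszul algebra, and a direct computation — most transparently through Proposition~\ref{prop:SimultaniousKoszul} applied to the surjection $\Lambda^\bullet(V)\twoheadrightarrow\rmH^\bullet(D,\FF_p)$ — shows that $(\ker\psi^\bullet)(2)$ is Koszul. For a free pro-$p$ product one has $\Lambda^\bullet(V)=\Lambda^\bullet(V_1)\otimes^{-1}\Lambda^\bullet(V_2)$ while $\rmH^\bullet(G_1*G_2,\FF_p)$ is the fibre coproduct of the two cohomology algebras over $\FF_p$, and tracing $\psi^\bullet$ through this yields a short exact sequence of $\Lambda^\bullet(V)$-modules
\[
0\to\bigl(\Lambda^{\geq 1}(V_1)\otimes^{-1}\Lambda^{\geq 1}(V_2)\bigr)(2)\to(\ker\psi^\bullet)(2)\to(\ker\psi_1^\bullet)(2)\oplus(\ker\psi_2^\bullet)(2)\to 0,
\]
where on the right each $(\ker\psi_i^\bullet)(2)$ is inflated along the projection $\Lambda^\bullet(V)\to\Lambda^\bullet(V_i)$. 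The left term equals $\Lambda^{\geq 1}(V_1)(1)\otimes^{-1}\Lambda^{\geq 1}(V_2)(1)$, which is Koszul by Proposition~\ref{prop:CohomologyOfTensorProduct} because each $\Lambda^{\geq 1}(V_i)(1)$ is the (Koszul) first syzygy of the trivial module over the Koszul algebra $\Lambda^\bullet(V_i)$; the right term is Koszul by the inductive hypothesis, since an inflated module $M_i\otimes^{-1}\FF_p$ is Koszul over $\Lambda^\bullet(V_1)\otimes^{-1}\Lambda^\bullet(V_2)$ whenever $M_i$ is Koszul (again Proposition~\ref{prop:CohomologyOfTensorProduct}); and because Koszulity of a module is the vanishing of $\rmH_{ij}$ for $i\neq j$, the long exact sequence in homology forces $(\ker\psi^\bullet)(2)$ to be Koszul as well. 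Finally, for a semidirect product $H\rtimes\ZZ_p^n$ of elementary type the conjugation action of $\ZZ_p^n$ on $\rmH^\bullet(H,\FF_p)$ is trivial because $\theta(G)\subseteq 1+p\ZZ_p$, so the Lyndon--Hochschild--Serre spectral sequence degenerates and $\rmH^\bullet(G,\FF_p)\cong\rmH^\bullet(H,\FF_p)\otimes^{-1}\Lambda^\bullet(\FF_p^n)$, whence $(\ker\psi^\bullet)(2)\cong(\ker\psi_H^\bullet)(2)\otimes^{-1}\Lambda^\bullet(\FF_p^n)$, which is Koszul by Proposition~\ref{prop:CohomologyOfTensorProduct} since a free module over a Koszul algebra is Koszul.

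\textbf{Main obstacle.} Two points demand the most care. On the group-theoretic side the Demushkin base case is the delicate one: one must know that $\rmH^\bullet(D,\FF_p)$ is genuinely Koszul and then transport Koszulity to the relations module, and throughout the induction one must keep the grading shifts aligned so that the Koszul pieces assembled in the free-product step have matching diagonals — this is precisely why the statement concerns $(\ker\psi^\bullet)(2)$ rather than $\ker\psi^\bullet$. On the side of the conjecture itself, the essential limitation is that one cannot dispense with either the Elementary Type Conjecture (open in general, known only in the cases (a)--(e) above) or the finiteness of $\KK^\times/\KK^{\times p}$: for an infinitely generated maximal pro-$p$ Galois group there is no structural description of $\rmH^\bullet(G_\KK(p),\FF_p)$ beyond quadraticity, so the conjecture in full strength lies beyond the reach of this method.
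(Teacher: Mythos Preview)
Your conditional reduction is exactly what the paper does: Conjecture~\ref{conj:ModuleKoszulityConjecture1} is not proved unconditionally, and the paper establishes it only for fields with finitely generated $G_\KK(p)$ assuming the Elementary Type Conjecture, via Theorem~\ref{thm:ElementaryTypeKoszul}. Your inductive scheme for that theorem also matches the paper's, but the free-product step is organised differently. You use the sequence
\[
0\longrightarrow\Lambda^{\geq 1}(V_1)\otimes^{-1}\Lambda^{\geq 1}(V_2)\longrightarrow\ker\psi^\bullet\longrightarrow(\ker\psi_1^\bullet\otimes^{-1}\FF_p)\oplus(\FF_p\otimes^{-1}\ker\psi_2^\bullet)\longrightarrow 0
\]
together with the (correct) observation that an extension of Koszul modules is Koszul. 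The paper instead takes the sequence $0\to\rmH^\bullet(G)\to\rmH^\bullet(G_1)\oplus\rmH^\bullet(G_2)\to\FF_p\to 0$ from \cite{NSW2008}*{Theorem~4.1.4} and computes $\rmH_{j}(\Lambda,\rmH^\bullet(G_i))$ via Proposition~\ref{prop:CohomologyOfTensorProduct}. Both routes are valid; yours is more direct once the sequence is verified, while the paper's avoids checking that the mixed tensor term is a submodule with the right quotient.

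Two steps in your sketch need repair. First, the Demushkin case is not a proof: invoking Proposition~\ref{prop:SimultaniousKoszul} on the surjection $\Lambda^\bullet(V)\twoheadrightarrow\rmH^\bullet(D,\FF_p)$ does not by itself yield Koszulity of $(\ker\psi^\bullet)(2)$, and some concrete input about the cup product on a Demushkin group is genuinely required. The paper (Proposition~\ref{prop:DemushkinKoszul}) uses $0\to(\ker\psi^\bullet)(2)\to L_2(2)\to\FF_p\to 0$ and then the symplectic basis of $\rmH^1(D,\FF_p)$ to show that an explicit map $\alpha_{i+1}\colon\mathbb{S}^{i+1}(V^*)\to\mathbb{S}^{i+2}(V^*)\otimes V^*$ is injective. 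Second, in the elementary type construction it is the abelian factor $A\cong\ZZ_p^n$ that is normal, so the group is $A\rtimes_\theta G_0$, not $H\rtimes\ZZ_p^n$; the relevant action is that of $G_0$ on $\rmH^\bullet(A,\FF_p)$, not of $\ZZ_p^n$ on $\rmH^\bullet(H,\FF_p)$. Your conclusion $\rmH^\bullet(G,\FF_p)\cong\rmH^\bullet(G_0,\FF_p)\otimes^{-1}\Lambda^\bullet(A/pA)$ is nonetheless correct --- the paper obtains it from \cite{MinacPasiniQuadrelliTan2021}*{Proposition~5.8} --- but triviality of the action alone does not force the spectral sequence to collapse multiplicatively, so that citation (or an equivalent argument) is needed.
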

\begin{rem}
    The Module Koszulity Conjecture~1 implies the Koszulity Conjecture by a simple argument using a change of rings spectral sequence.
    
    Theorem 2 of \cite{Positselski2005} shows that the Module Koszulity Conjecture~1 implies the Bogomolov--Positselski Conjecture. 
\end{rem}
These conjectures are known to hold for some classes of fields (e.g. number fields, local fields (cf. \cite{Positselski2014}). Recently, Miná{\^c}, Pasini, Quadrelli, and Tân made some progress on the first of the above conjectures by showing that for oriented pro-$p$ groups $(G,\theta)$ of elementary type, the algebra $\rmH^\bullet(G,\FF_p)$ has the PBW property and is therefore Koszul (see \cite{MinacPasiniQuadrelliTan2022}). In \cite{MinacPasiniQuadrelliTan2021} they furthermore proved that if the maximal pro-$p$ quotient of $G_\mathbb{K}$ is a mild pro-$p$ group, then Conjecture \ref{conj:KoszulityConjecture} is true. In 2020 Snopce and Zalesskii proved that the cohomology algebra of a right-angled Artin pro-$p$ group is universally Koszul if and only if it is the maximal pro-$p$ Galois group of a field $\mathbb{K}$ containing a primitive $p^{\text{th}}$ root of unity (see \cite{SnopceZalesskii2022}).

\section{The Bogomolov--Positselski conjecture and the proofs of Theorem~\ref{thm:TorSES} and Theorem~\ref{thm:WeakKoszulBPC}}
\subsection{Proof of Theorem~\ref{thm:TorSES}}
We start with a general proposition about the homology of graded algebras and then apply it to the group-theoretic situation.
\begin{prop}
    \label{prop:HomologicalAlgebra}
    Let $A$ be a connected graded $k$-algebra and 
    \begin{equation*}
        0\to K\to M\overset{\varphi}{\to}A\overset{\pi}{\to}B\to 0
    \end{equation*}
     be an exact sequence of $A$-modules with degree preserving homomorphisms. Assume the following two conditions:
    \begin{enumerate}
        \item \label{ass:Mconcentrated} $M_i=0$ for $i< 1$ (thus $\pi$ is an isomorphism in degree $0$ and $1$);
        \item \label{ass:Kconcentrated} $K_i=0$ for $i<4$ (this implies with (\ref{ass:Mconcentrated}) that $\varphi$ is injective in degree $2$ and $3$);
    \end{enumerate}
    Then there is an exact sequence
    \begin{equation*}
            0\to H_{1,4}(A,M) \to H_{2,4}(A,B)\to K_4\to H_{0,4}(A,M)\to H_{1,4}(A,B)\to 0
    \end{equation*}
\end{prop}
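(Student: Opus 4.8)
The plan is to break the four-term exact sequence $0\to K\to M\overset{\varphi}{\to}A\overset{\pi}{\to}B\to 0$ into two short exact sequences and chase the associated long exact sequences in $\rmH_\bullet(A,-)$, keeping careful track of the internal grading in degree $4$. First I would set $C:=\operatorname{im}\varphi=\ker\pi\subseteq A$, so that we obtain two short exact sequences of graded $A$-modules with degree-preserving maps:
\begin{equation*}
  0\to K\to M\to C\to 0
  \qquad\text{and}\qquad
  0\to C\to A\to B\to 0.
\end{equation*}
Each of these yields a long exact sequence in $\rmH_{i}(A,-)$, and since all maps have degree $0$, restricting to the internal degree-$4$ components gives long exact sequences of the graded pieces $\rmH_{i,4}(A,-)$.

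The second step is to input the vanishing consequences of Proposition~\ref{prop:CohomologyConcentrated}. Since $A$ is connected graded, $\rmH_i(A,A)=0$ for $i\geq 1$ (the bar complex for the free module $A$ is exact in positive homological degree), so in particular $\rmH_{1,4}(A,A)=\rmH_{2,4}(A,A)=0$; moreover $\rmH_{0}(A,A)=A$. Feeding this into the long exact sequence for $0\to C\to A\to B\to 0$ gives isomorphisms $\rmH_{2,4}(A,B)\cong \rmH_{1,4}(A,C)$ and $\rmH_{1,4}(A,B)\cong \rmH_{0,4}(A,C)/(\text{image of }\rmH_{0,4}(A,A)\to \rmH_{0,4}(A,B))$; more precisely one gets the four-term exact sequence $0\to \rmH_{2,4}(A,B)\to \rmH_{1,4}(A,C)\to 0$ in homological degree $2$ together with $\rmH_{1,4}(A,A)=0\to \rmH_{1,4}(A,B)\to \rmH_{0,4}(A,C)\to \rmH_{0,4}(A,A)$. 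Here I use assumption~(\ref{ass:Kconcentrated}) together with $M_i=0$ for $i<1$: since $C$ is a quotient of $M$ it is also concentrated in degrees $\geq 1$, and since $C\hookrightarrow A$ with $C=A$ in degrees $\leq 3$ by the injectivity of $\varphi$ there, one checks $\rmH_{0,4}(A,C)\cong \rmH_{0,4}(A,A)=0$ (the degree-$0$ homology $\rmH_0(A,N)=N/N_{+}\cdot A\ldots$ — more carefully, $\rmH_{0,j}(A,N)=(N/N\cdot A_+)_j$, which for $N=C$ in degree $4$ is computed from $C_4$ and $C_{<4}\cdot A_+=A_{<4}\cdot A_+$, giving $0$). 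This forces $\rmH_{1,4}(A,B)\cong \rmH_{0,4}(A,B)$ is actually cut out differently — so I would instead be careful and keep $\rmH_{1,4}(A,B)$ as the cokernel term rather than claiming it vanishes, since the proposition's sequence explicitly ends with it.

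The third step is the long exact sequence for $0\to K\to M\to C\to 0$ in internal degree $4$:
\begin{equation*}
  \cdots\to \rmH_{1,4}(A,M)\to \rmH_{1,4}(A,C)\to \rmH_{0,4}(A,K)\to \rmH_{0,4}(A,M)\to \rmH_{0,4}(A,C)\to 0,
\end{equation*}
noting $\rmH_{2,4}(A,C)\to\rmH_{1,4}(A,K)$ and that $\rmH_{i,4}(A,K)=0$ for $i\geq 1$ by Proposition~\ref{prop:CohomologyConcentrated} applied with $m=4$ (so $\rmH_{1,4}(A,K)=0$ since $4<1+4$), which makes $\rmH_{1,4}(A,M)\to\rmH_{1,4}(A,C)$ injective. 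Also $\rmH_{0,4}(A,K)=(K/K\cdot A_+)_4=K_4$ since $K$ starts in degree $4$. Splicing this with the isomorphism $\rmH_{2,4}(A,B)\cong\rmH_{1,4}(A,C)$ from step two, and with the identification of $\rmH_{0,4}(A,C)$ as the image of $\rmH_{0,4}(A,M)$ which in turn (from step two) fits into $\rmH_{0,4}(A,C)\to\rmH_{1,4}(A,B)\to 0$ after accounting for the connecting map from $\rmH_{1}$ — here I would assemble the final six-term sequence
\begin{equation*}
  0\to \rmH_{1,4}(A,M)\to \rmH_{2,4}(A,B)\to K_4\to \rmH_{0,4}(A,M)\to \rmH_{1,4}(A,B)\to 0
\end{equation*}
by a diagram chase, using exactness at each spliced node. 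The main obstacle I anticipate is the bookkeeping at the right-hand end: one must verify that the composite $K_4=\rmH_{0,4}(A,K)\to\rmH_{0,4}(A,M)$ has cokernel mapping isomorphically onto $\rmH_{1,4}(A,B)$, which requires combining the connecting homomorphism $\rmH_{1,4}(A,B)\to\rmH_{0,4}(A,C)$ of the second sequence with the surjection $\rmH_{0,4}(A,M)\twoheadrightarrow\rmH_{0,4}(A,C)$ of the first, and checking these glue exactly (rather than merely up to the image of $\rmH_{1,4}(A,A)=0$, which is the point that makes it work). Everything else is a routine application of Proposition~\ref{prop:CohomologyConcentrated} and the functoriality of the bar-complex long exact sequence.
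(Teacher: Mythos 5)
Your route is genuinely different from the paper's: you split the four-term sequence at $C:=\operatorname{im}\varphi=\ker\pi$ into two short exact sequences and splice the two long exact homology sequences in internal degree $4$, whereas the paper runs the hyperhomology spectral sequence $D^1_{s,t}=\rmH_{t,4}(A,C_s)\Rightarrow 0$ of the acyclic complex $0\leftarrow B\leftarrow A\leftarrow M\leftarrow K\leftarrow 0$ and reads the sequence off its degenerating pages. Both arguments use exactly the same inputs: $\rmH_{i,4}(A,A)=0$ for all $i\geq 0$ (because $\rmH_0(A,A)\cong k$ is concentrated in internal degree $0$ and $\rmH_{\geq 1}(A,A)=0$ for the free module), and $\rmH_{1,4}(A,K)=0$, $\rmH_{0,4}(A,K)\cong K_4$ from Proposition~\ref{prop:CohomologyConcentrated}. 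Your splicing --- the isomorphisms $\rmH_{2,4}(A,B)\cong\rmH_{1,4}(A,C)$ and $\rmH_{1,4}(A,B)\cong\rmH_{0,4}(A,C)$ (the latter via the connecting map), inserted into the degree-$4$ long exact sequence of $0\to K\to M\to C\to 0$ --- assembles the stated five-term sequence correctly, and is arguably the more elementary presentation; the spectral sequence buys nothing extra here.

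Two local corrections. First, $\rmH_0(A,A)=A\otimes_Ak\cong k$, not $A$; you use the correct consequence $\rmH_{0,4}(A,A)=0$ afterwards, so this is only a slip. Second, and more importantly, delete the passage claiming that $C=A$ in degrees $\leq 3$ and hence $\rmH_{0,4}(A,C)=0$. Both claims are false: $C=\ker\pi$ vanishes in degrees $0$ and $1$ (where $\pi$ is an isomorphism), and in degrees $2$ and $3$ it equals $\varphi(M_i)$, which need not be all of $A_i$; consequently there is no reason for $C_4/(C_3A_1+C_2A_2)$ to vanish. Had the claim been true, it would force $\rmH_{1,4}(A,B)=0$ unconditionally, which is strictly more than the proposition asserts --- indeed, in the application to Theorem~\ref{thm:TorSES} the vanishing of $\rmH_{1,4}(\Lambda^\bullet(V),B)$ requires the separate input that $\ker\psi^\bullet$ is generated in degree $2$. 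You correctly back off and keep $\rmH_{1,4}(A,B)\cong\rmH_{0,4}(A,C)$ as the terminal term, so the final sequence you write down is right; just remove the false detour.
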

\begin{proof}
    Consider the acyclic complex $C_*:=[0\leftarrow B\leftarrow A \leftarrow M\leftarrow K\leftarrow 0]$ (we choose $B$ to be in degree $0$, but it does not affect our arguments). Now, since the category of graded modules with degree preserving homomorphisms has enough projectives, there exist (projective) Cartan-Eilenberg resolutions, there is a homological spectral sequence $D^1_{s,t}:=H_{t,4}(A,C_s)\Rightarrow 0$. Since $D^1$ is concentrated in $4$ columns, we conclude $D^4_{s,t}=0$ for all $s$ and $t$. We denote the differentials by $\partial^r_{s,t}$.

    As $A$ is a free $A$ module, we have ${D}{^1_{1,t}}=0$ for all $t$ and by assumption (\ref{ass:Kconcentrated}) $\rmH_{0,4}(A,K)\cong K_4$. A variant of Proposition~\ref{prop:CohomologyConcentrated} implies $H_{i,4}(A,K)=0$ for $i\geq 1$ and similarly $H_{i,4}(A,M)=0$ for $i\geq 3$ (by assumption (\ref{ass:Mconcentrated})). Additionally $H_0(A,B)=B\otimes_Ak=k$, which is concentrated in degree $0$. Thus, we see that the first page of the spectral sequence can be described as depicted in Figure~\ref{fig:FirstPageOfD}.

    \begin{figure}[ht]
        \centering
        \begin{tikzpicture}
            \matrix (m) [matrix of math nodes,
                nodes={text width= 2cm,align=center, minimum height=5ex,outer sep=-5pt},
                column sep=.2cm,row sep=1ex,
                column 1/.style={nodes={text width=.6cm}}
                ]{
                    %Insert the entries of the spectral sequence here
                   % 4     &   0  &   0   &   0   &  0   \\
                    3     &   H_{3,4}(A,B)    &   0    &    0    &  0  \\
                    2     &   H_{2,4}(A,B)    &   0  &   H_{2,4}(A,M)     &  0  \\
                    1     &   H_{1,4}(A,B)    &   0   &    H_{1,4}(A,M)    &   0 \\
                    0     &   0   &    0    &   H_{0,4}(A,M)     &  K_4 \\
                \quad\strut &   0  &  1  &  2  & 3   \\};
            
                %Insert the name of the spectral sequence
                %\draw (m.west) node[left=.5cm] {${D}{^1_{*,*}}$};
                
                \draw[thick] (m-1-1.north east) -- (m-\the\pgfmatrixcurrentrow-1.south east) ;
                \draw[thick] (m-\the\pgfmatrixcurrentrow-1.north west) -- (m-\the\pgfmatrixcurrentrow-\the\pgfmatrixcurrentcolumn.north east) ;
                %\draw[thick,->] (m-3-3.east) -- (m-4-5.west);
                %\draw[->] (m-4-5.west) -- node[above]{$\partial_{3,0}^1(4)$} (m-4-4.east);
        \end{tikzpicture}
        \caption{First page of the spectral sequence $D^1_{*,*}$}
        \label{fig:FirstPageOfD}
    \end{figure}
    
The only non-zero differential on the first page is is ${\partial}{^1_{3,0}}:K_4\to \rmH_{0,4}(A,M)$. The resulting second page is shown in Figure~\ref{fig:SecondPageOfD}.

    \begin{figure}[ht]
        \centering
        \begin{tikzpicture}
            \matrix (m) [matrix of math nodes,
                nodes={text width= 2cm,align=center, minimum height=5ex,outer sep=-5pt},
                column sep=.2cm,row sep=1ex,
                column 1/.style={nodes={text width=.6cm}}
                ]{
                    %Insert the entries of the spectral sequence here
                   % 4     &   0  &   0   &   0   &  0   \\
                    3     &   H_{3,4}(A,B)    &       &    0    &  0  \\
                    2     &   H_{2,4}(A,B)    &     &   H_{2,4}(A,M)     &  0  \\
                    1     &   H_{1,4}(A,B)    &      &    H_{1,4}(A,M)    &   0 \\
                    0     &   0   &    0    &   \coker({\partial}{^1_{3,0}})    &  \ker({\partial}{^1_{3,0}})  \\
                \quad\strut &   0  &  1  &  2  & 3   \\};
            
                %Insert the name of the spectral sequence
                %\draw (m.west) node[left=.5cm] {${D}{^2_{*,*}}$};
                
                \draw[thick] (m-1-1.north east) -- (m-\the\pgfmatrixcurrentrow-1.south east) ;
                \draw[thick] (m-\the\pgfmatrixcurrentrow-1.north west) -- (m-\the\pgfmatrixcurrentrow-\the\pgfmatrixcurrentcolumn.north east) ;
                \draw[->] (m-4-4.west) -- node[above] {${\partial}{^2_{2,0}}$} (m-3-2.east);
                \draw[->] (m-3-4.west) -- node[above] {${\partial}{^2_{2,1}}$} (m-2-2.east);
                \draw[->] (m-2-4.west) -- node[above] {${\partial}{^2_{2,2}}$} (m-1-2.east);
        \end{tikzpicture}
        \caption{Second page of the spectral sequence $D^2_{*,*}$}
        \label{fig:SecondPageOfD}
    \end{figure}

We conclude that all the maps ${\partial}{^2_{2,t}}$ are injective as $\ker ({\partial}{^2_{2,t}})={D}{^3_{2,t}}={D}{^\infty_{2,t}}$. Moreover, ${\partial}{^2_{2,0}}$ and ${\partial}{^2_{2,2}}$ have to be isomorphisms. Thus, the third page has only two non-zero entries is described in Figure~\ref{fig:ThirdPageOfD}.

    \begin{figure}[ht]
    \centering
        \begin{tikzpicture}
            \matrix (m) [matrix of math nodes,
                nodes={text width= 2cm,align=center, minimum height=5ex,outer sep=-5pt},
                column sep=.2cm,row sep=1ex,
                column 1/.style={nodes={text width=.6cm}}
                ]{
                    %Insert the entries of the spectral sequence here
                   % 4     &   0  &   0   &   0   &  0   \\
                    3     &   0    &   0    &    0    &  0  \\
                    2     &   \coker{\partial}{^2_{2,1}}   &   0  &   0     &  0  \\
                    1     &   0    &   0   &    0    &   0 \\
                    0     &   0   &    0    &   0    &  \ker({\partial}{^1_{3,0}})  \\
                \quad\strut &   0  &  1  &  2  & 3   \\
                };
            
            %Insert the name of the spectral sequence
            %\draw (m.west) node[left=.5cm] {${D}{^3_{*,*}}$};
            
            \draw[thick] (m-1-1.north east) -- (m-\the\pgfmatrixcurrentrow-1.south east) ;
            \draw[thick] (m-\the\pgfmatrixcurrentrow-1.north west) -- (m-\the\pgfmatrixcurrentrow-\the\pgfmatrixcurrentcolumn.north east) ;
            \draw[->] (m-4-5.west) -- node[above] {${\partial}{^3_{3,0}}$} (m-2-2.east);
        \end{tikzpicture}
        \caption{Second page of the spectral sequence $D^2_{*,*}$}
        \label{fig:ThirdPageOfD}
    \end{figure}
Similarly to the discussion before, the differential ${\partial}{^3_{3,0}}$ has to be an isomorphism. Thus we arrive at two short exact sequences:
\begin{equation*}
    \begin{tikzcd}[row sep=small]
        0\arrow[r]&H_{1,4}(A,M)\arrow[r,"{{\partial}{^2_{2,1}}}"] &H_{2,4}(A,B)\arrow[r]&\ker({\partial}{^1_{3,0}})\arrow[r]&0\\
        0\arrow[r]&\im({\partial}{^1_{3,0}})\arrow[r] &H_{0,4}(A,M)\arrow[r]&H_{1,4}(A,B)\arrow[r]&0\\
    \end{tikzcd}
\end{equation*}
Splicing these sequences together yields the desired $5$-term exact sequence.
\end{proof}
Now we are ready to prove 
\begin{repthm}{thm:TorSES}
    Let $(G,\theta)$ be a torsion-free, Kummerian, oriented pro-$p$ group, such that $G$ is $\rmH^\bullet$-quadratic. We set  $V:=\rmH^1(G,\FF_p)$, $B:=\rmH^\bullet(G,\FF_p)$ and let $N$ be the graded $\Lambda^\bullet(V)$-module $\rmH^\bullet(G(\theta),\rmH^1(I_\theta,\FF_p))$. Then there is an exact sequence:
    \begin{equation*}
        \begin{tikzcd}[column sep=small]
            0\arrow[r]&\rmH_{1,2}(\Lambda^\bullet(V),N)\arrow[r] &\rmH_{2,4}(\Lambda^\bullet(V),B)\arrow[r] &\ker d_2^{2,1}\arrow[r]&\rmH_{0,2}(\Lambda^\bullet(V),N)\arrow[r]&0
        \end{tikzcd}
    \end{equation*}
    In particular $\ker d_2^{2,1}=0$ if and only if the first map is an isomorphism and $H_{0,2}(\Lambda^\bullet(V),N)=0$.
\end{repthm}
\begin{proof}
    Consider the Hochschild-Serre spectral sequence from (\ref{equ:HochschildSerreSpectral}), which is multiplicative. Then $E_2^{\bullet,0}=\rmH^\bullet(G(\theta),\FF_p)\cong \Lambda^\bullet(V)$ as graded algebra, and $N=E_2^{\bullet,1}$ is a $\Lambda^\bullet(V)$-module. By \cite[Proposition 4.4 (ii)]{QuadrelliWeigel2022}, we have an exact sequence
    \begin{equation*}    
            0\longrightarrow \ker d^{\bullet,1}_2\longrightarrow N\overset{d^{\bullet,1}_2}\longrightarrow A\overset{\psi^\bullet}{\longrightarrow}B\longrightarrow 0
    \end{equation*}
    Since $d^{\bullet,1}_2$ is of degree $2$, we just replace $N$ by $M:=N(-2)$, which immediately implies $M_i=0$ for $i\leq 1$. Similarly, we set $K:=(\ker d_2^{\bullet,1})(-2)$ and get $K_i=0$ for $i\leq 1$. To show $K_2=0$, as the injectivity of $d^{0,1}_2$ follows directly from the $5$-term sequence associated to the spectral sequence. For $K_3$ we consider the following exact sequence
    \begin{equation*}
        \rmH^2(G(\theta),\FF_p)\overset{\psi^2}\to \ker(\rmH^2(G,\FF_p)\to E_2^{0,2})\to E^{1,1}_2\overset{d^{1,1}_2}\to  H^3(G(\theta),\FF_p) 
    \end{equation*}
    coming from the seven term sequence associated to the spectral sequence. Since $\psi^2$ is surjective onto $\rmH^2(G,\FF_p)$, the differential $d^{1,1}_2$ has to be injective and $K_2=K_3=0$. Now we can apply Proposition~\ref{prop:HomologicalAlgebra} and get the following exact sequence:
    \begin{align*}
            0\to\rmH_{1,2}(\Lambda^\bullet(V),N)&\to\rmH_{2,4}(\Lambda^\bullet(V),B)\to\ker d_2^{2,1}\to ...\\...&\to\rmH_{0,2}(\Lambda^\bullet(V),N)\to\rmH_{1,4}(\Lambda^\bullet(V),B)\to 0
    \end{align*}
    It remains to show that $\rmH_{1,4}(\Lambda^\bullet(V),B)$. By \cite[Section 4.2]{QuadrelliWeigel2022} the $\ker\psi^\bullet$ is generated in degree $2$. Thus we get $\rmH_{1,4}(\Lambda^\bullet(V),B)\cong \rmH_{0,4}(\Lambda^\bullet(V),\ker \psi^\bullet)=0$, yielding the desired exact sequence.
\end{proof}
Theorem~\ref{thm:TorSES} implies, that if $G$ has the Bogomolov--Positselski property, then 
\begin{align*}
    0=H_{0,2}(\Lambda^\bullet(V),N)=(\FF_p\otimes_{\Lambda^\bullet(V)}N)_2=N_2/(\Lambda^1(V)\cdot  N_1+\Lambda^2(V)\cdot N_0).
\end{align*}
Using that $\rmH_{0,1}(\Lambda^\bullet(V),N)=0$ one can deduce the following corollary:
\begin{cor}
\label{cor:CupProductSurjective}
    Let $(G,\theta)$ be as in Theorem~\ref{thm:TorSES} and assume that it has the Bogomolov--Positselski property, then the map
    \begin{align}
        \label{eq:CupProductCondition}
        \rmH^1(G(\theta),\rmH^1(K_\theta(G),\FF_p))\otimes \rmH^1(G(\theta),\FF_p)\to \rmH^2(G(\theta),\rmH^1(K_\theta(G),\FF_p))
    \end{align}
    which is induced by the cup product is surjective. The converse implication holds if
    \begin{align*}
        %\label{eq:ConditionTor14is0}
        \rmH_{2,4}(\Lambda^\bullet(V),B)=0 .
    \end{align*}
\end{cor}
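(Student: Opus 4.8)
The plan is to extract the corollary from the exact sequence of Theorem~\ref{thm:TorSES} together with the criterion of Theorem~\ref{thm:QuadrelliWeigel_BPC}, after identifying the low-degree homology groups of $N$ over $\Lambda^\bullet(V)$ with explicit cup-product quotients. Since $(G,\theta)$ is Kummerian, $K_\theta(G)=I_\theta(G)$, so the module $N=\rmH^\bullet(G(\theta),\rmH^1(I_\theta,\FF_p))$ of Theorem~\ref{thm:TorSES} is precisely $\bigoplus_i\rmH^i(G(\theta),\rmH^1(K_\theta(G),\FF_p))$; moreover, as the Hochschild--Serre spectral sequence is multiplicative, the $\Lambda^\bullet(V)=E_2^{\bullet,0}$-module structure on $N=E_2^{\bullet,1}$ is the one induced by the cup product. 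By definition $\rmH_{0,2}(\Lambda^\bullet(V),N)=(\FF_p\otimes_{\Lambda^\bullet(V)}N)_2=N_2/(\Lambda^1(V)\cdot N_1+\Lambda^2(V)\cdot N_0)$, so the target quotient of the corollary differs from this only by the contribution $\Lambda^2(V)\cdot N_0$.

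The key intermediate claim, which does not use the Bogomolov--Positselski property, is that $\rmH_{0,1}(\Lambda^\bullet(V),N)=0$, i.e.\ $N_1=\Lambda^1(V)\cdot N_0$. I would obtain this from the four-term exact sequence $0\to K\to M\overset{\varphi}{\to}\Lambda^\bullet(V)\overset{\psi^\bullet}{\to}B\to 0$ used in the proof of Theorem~\ref{thm:TorSES}, with $M=N(-2)$ and $K=(\ker d_2^{\bullet,1})(-2)$: since $K_2=K_3=0$, the $\Lambda^\bullet(V)$-linear map $\varphi$ is injective in degrees $2$ and $3$, with $\varphi_2(N_0)=(\ker\psi^\bullet)_2$ and $\varphi_3(N_1)=(\ker\psi^\bullet)_3$; as $\ker\psi^\bullet$ is generated in degree $2$ one has $(\ker\psi^\bullet)_3=\Lambda^1(V)\cdot(\ker\psi^\bullet)_2$, so $\varphi_3(\Lambda^1(V)\cdot N_0)=\Lambda^1(V)\cdot\varphi_2(N_0)=(\ker\psi^\bullet)_3=\varphi_3(N_1)$, and injectivity of $\varphi_3$ yields $N_1=\Lambda^1(V)\cdot N_0$. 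Consequently $\Lambda^2(V)\cdot N_0\subseteq\Lambda^1(V)\cdot\Lambda^1(V)\cdot N_0=\Lambda^1(V)\cdot N_1$, so $\rmH_{0,2}(\Lambda^\bullet(V),N)=N_2/(\Lambda^1(V)\cdot N_1)$, and this quotient vanishes precisely when the cup-product map in (\ref{eq:CupProductCondition}) is surjective.

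With this identification in place the two implications are formal. For the direct implication: the Bogomolov--Positselski property gives $\ker d_2^{2,1}=0$ by Theorem~\ref{thm:QuadrelliWeigel_BPC}, and then the exact sequence of Theorem~\ref{thm:TorSES} forces $\rmH_{0,2}(\Lambda^\bullet(V),N)=0$, i.e.\ surjectivity of (\ref{eq:CupProductCondition}). For the converse: surjectivity of (\ref{eq:CupProductCondition}) gives $\rmH_{0,2}(\Lambda^\bullet(V),N)=0$, and feeding the hypothesis $\rmH_{2,4}(\Lambda^\bullet(V),B)=0$ into the exact sequence of Theorem~\ref{thm:TorSES} forces first $\rmH_{1,2}(\Lambda^\bullet(V),N)=0$ and then $\ker d_2^{2,1}\cong\rmH_{0,2}(\Lambda^\bullet(V),N)=0$, whence the Bogomolov--Positselski property again by Theorem~\ref{thm:QuadrelliWeigel_BPC}. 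The only genuinely delicate point is matching the abstract $E_2$-module structure with the cup product and using the degree-$2$ generation of $\ker\psi^\bullet$ correctly to prove $\rmH_{0,1}(\Lambda^\bullet(V),N)=0$; the left/right module bookkeeping is harmless because $\Lambda^\bullet(V)$ is graded-commutative, and everything else is diagram chasing.
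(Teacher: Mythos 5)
Your proposal is correct and follows essentially the same route as the paper: identify $\rmH_{0,2}(\Lambda^\bullet(V),N)$ with the cokernel of the cup-product map via $\rmH_{0,1}(\Lambda^\bullet(V),N)=0$, then read both implications off the exact sequence of Theorem~\ref{thm:TorSES} combined with Theorem~4.5 of \cite{QuadrelliWeigel2022}. In fact you supply slightly more detail than the paper, which merely asserts $\rmH_{0,1}(\Lambda^\bullet(V),N)=0$, whereas you derive it correctly from the injectivity of $d_2^{\bullet,1}$ in the relevant degrees and the degree-$2$ generation of $\ker\psi^\bullet$.
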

\begin{rem}
    If $(G,\theta)=(G_\mathbb{K}(p),\theta_{\KK})$ for a field $\KK$, then the cup product can be written in terms of the field $\mathbb{L}:=\sqrt[p^\infty]{\mathbb{K}}$. In particular the cup product in (\ref{eq:CupProductCondition}) becomes
    \begin{align*}
        \rmH^1(G(\theta),\mathbb{L}^\times\otimes \FF_p)\otimes \rmH^1(G(\theta),\FF_p)\to \rmH^2(G(\theta),\mathbb{L}^\times \otimes \FF_p).
    \end{align*}
\end{rem}
\begin{exmp}
\label{exmp:F2 times F2}
    Denote by $F_2$ the free pro-$p$ group on two generators. Then consider the group $G=F_2\times F_2$ as oriented pro-$p$ group with trivial orientation. Then it is Kummerian, as $G^{\rm ab}\cong \ZZ_p^2\times \ZZ_p^2$ is torsion-free, and $G$ has quadratic cohomology. Thus Theorem~\ref{thm:TorSES} is applicable to $G$. Since $G'=F_2'\times F_2'$ is not a free pro-$p$ group, we get the $\ker d_2^{2,1}\neq 0$ by \cite{QuadrelliWeigel2022}*{Theorem 4.5}. Note that by \cite{Quadrelli2014}*{Theorem 5.6} this group is not the maximal pro-$p$ group of a field containing a primitive $p^{\rm th}$ root of unity. 

    If we suppress the coefficients in cohomology, we implicitly take coefficients in $\FF_p$. By the Künneth formula we have $\rmH^\bullet(G)\cong \rmH^\bullet(F_2)\otimes^{-1}\rmH^\bullet(F_2)$ and it is not hard to see that 
    \begin{align*}
        \rmH_{i,j}(\Lambda^\bullet(\FF_p^2),\rmH^\bullet(F_2))\cong \begin{cases}
            \FF_p&\text{if }i=j=0,\\
            \mathbb{S}^{i-1}(\FF_p^2)&\text{if }0<i=j-1\text{ and}\\
            0&\text{otherwise.}
        \end{cases}
    \end{align*}
    By Construction~\ref{con:TensorProductOfAlgebras} we conclude, that $\rmH_{2,4}(\Lambda^\bullet(\rmH^1(G)),\rmH^\bullet(G))\cong \FF_p$. We now compute $\rmH^\bullet(G^{\rm ab},\rmH^1(G'))$. By the Künneth formula again, we have that $\rmH^1(G')\cong \rmH^1(F_2')\oplus \rmH^1(F_2')$. One can also see quite easily from the spectral sequence associated to $1\to F_2'\to F_2\to \ZZ_p^2\to 1$, that
    \begin{align*}
        \rmH^0(\ZZ_p^2,\rmH^1(F_2'))\cong \FF_p\quad \text{and}\quad \rmH^n(\ZZ_p^2,\rmH^1(F_2'))=0\text{ for }n>0.
    \end{align*}
    Thus --- again by the Künneth formula --- we have
    \begin{align*}
        \rmH^\bullet(G^{\rm ab},\rmH^1(G'))\cong \Lambda^\bullet(\FF_p^2)\otimes \FF_p\oplus \FF_p\otimes \Lambda^\bullet(\FF_p^2)
    \end{align*}
    and we conclude, that $\rmH^\bullet(G^{\rm ab},\rmH^1(G'))$ is a Koszul $\Lambda^\bullet(\rmH^1(G))$-module, showing by Theorem~\ref{thm:TorSES}, that $\ker d^{2,1}_2\cong \FF_p$.
\end{exmp}

\subsection{Proof of Theorem~\ref{thm:WeakKoszulBPC}}
The central theorem required for the proof of \cite{Positselski2005}*{Theorem 2} is \cite{Positselski2005}*{Theorem 4}. We adapt this theorem and combine it with Theorem~\ref{thm:TorSES} to weaken the required conditions. 
\begin{rem}
\label{rem:Conilpotent group algebra}
    Theorem~2 of \cite{Positselski2005} is formulated in the language of coalgebras, which we have not introduced in this paper.

    For the notion of a conilpotent coalgebra and its cohomology, we refer to \cite{Positselski2005}*{Section 4}. A typical example of a conilpotent coalgebra is the completed group coalgebra $\FF_p(\!(G)\!):=\varinjlim_{N\unlhd_oG} \FF_p[G/N]^*$ for a pro-$p$ group $G$. Furthermore, in that situation, any discrete $p$-torsion $G$-module $M$ can be considered as a comodule over $\FF_p(\!(G)\!)$ and their cohomology agrees:
    \begin{align*}
        \rmH^i(\FF_p(\!(G)\!),M)\cong \rmH^i(G,M).
    \end{align*}
    In fact, for the proof of Theorem~\ref{thm:WeakKoszulBPC} it is sufficient to consider this special case in Proposition~\ref{thm:conilpotent-coalgebras}.
\end{rem}

\begin{prop}
\label{thm:conilpotent-coalgebras}
    Let $C$ be a conilpotent coalgebra over a field $k$, such that its cohomology algebra $A:=\rmH^{\bullet}(C,k)$ is Koszul, and $P$ be a comodule over $C$. Consider the graded $A$-module $M:=\rmH^\bullet(C,P)$. Assume that
    \begin{enumerate}
        \item \label{ass:TorVanishingCondition} the quadratic $A$-module $q_AM$ satisfies $\rmH_{i,i+1}(A,q_AM)=0$ for all $i\in \NN_0$;
        \item \label{ass:IsoAndMonoCondition}the natural morphism of graded $A$-modules $q_AM\to M$ is an isomorphism in degree $1$ and a monomorphism in degree $2$.
    \end{enumerate}
    Then the comparison map $q_AM\to M$ is an isomorphism in degree $2$. In particular $\rmH_{0,2}(A,M)=0$.
\end{prop}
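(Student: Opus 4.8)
The plan is to first reduce the proposition to the single vanishing statement $\rmH_{0,2}(A,M)=0$, and then to prove that vanishing by adapting the cobar/conilpotent-coalgebra machinery behind \cite{Positselski2005}*{Theorem 4}.

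\emph{Reduction.} By construction the comparison map $q_AM\to M$ is an isomorphism in degree $0$; in degree $1$ it is automatically injective, and hypothesis (\ref{ass:IsoAndMonoCondition}) says it is moreover surjective in degree $1$ and injective in degree $2$. Hence it is an isomorphism in degree $2$ if and only if $(q_AM)_2\to M_2$ is surjective. Since $q_AM$ is generated over $A$ in degree $0$ by $M_0$, the image of $(q_AM)_2$ in $M_2$ is exactly $A_2\cdot M_0$. Using the degree-$1$ isomorphism $M_1=A_1\cdot M_0$ one gets $A_1\cdot M_1=A_1A_1\cdot M_0\subseteq A_2\cdot M_0$, so $(A_+M)_2=A_2\cdot M_0$ and therefore $\rmH_{0,2}(A,M)=M_2/(A_+M)_2=M_2/(A_2\cdot M_0)$. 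Thus $q_AM\to M$ is an isomorphism in degree $2$ if and only if $\rmH_{0,2}(A,M)=0$, and it suffices to prove the latter. By the Remark preceding the statement one may moreover assume $C=\FF_p(\!(\Gamma)\!)$ and $M=\rmH^\bullet(\Gamma,P)$ for a pro-$p$ group $\Gamma$ and a discrete $p$-torsion $\Gamma$-module $P$, so that the cobar complex of $C$ is the standard (normalized) cochain complex computing $\rmH^\bullet(\Gamma,-)$; this is the only case needed for Theorem~\ref{thm:WeakKoszulBPC}.

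\emph{Main step.} To prove $\rmH_{0,2}(A,M)=0$, i.e. that every class in $M_2=\rmH^2(C,P)$ lies in the image of $A_2\otimes M_0=\rmH^2(C,k)\otimes\rmH^0(C,P)$, I would work with the reduced cobar DG algebra $\Omega^\bullet$ of $C$ (which satisfies $\rmH^\bullet(\Omega^\bullet)=A$) and its cobar DG module with coefficients in $P$ (whose cohomology is $M$). Because $A$ is Koszul, this cobar complex admits a filtration — essentially the one used by Positselski, built from the Koszul resolution of $k$ over $A$ — giving rise to a spectral sequence converging to $M$ whose early pages are expressed through the quadratic module $q_AM$ and the graded homology groups $\rmH_{i,j}(A,q_AM)$. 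Hypothesis~(\ref{ass:TorVanishingCondition}), the vanishing of the first off-diagonal homology $\rmH_{i,i+1}(A,q_AM)$ for all $i$, kills exactly the entries that could obstruct a degree-$2$ class of $M$ from being decomposable, while hypothesis~(\ref{ass:IsoAndMonoCondition}) supplies the comparison between $q_AM$ and $M$ in the range (degrees $\le 2$) where the spectral sequence argument does not yet produce it on its own; bookkeeping the two gradings then forces $\rmH_{0,2}(A,M)=0$, which by the reduction yields the isomorphism in degree $2$.

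\emph{Expected main obstacle.} The delicate point is the interaction of the two gradings in play: the cohomological degree, which is what makes $A$ a graded algebra and $M,q_AM$ graded modules, versus the cobar/resolution degree along which the spectral sequence runs. Concretely, the hardest pieces are (i) setting up the filtration on $\Omega^\bullet$ so that the associated graded is genuinely governed by the Koszul complex of $A$ — this is where Koszulity of $A$ is indispensable and amounts to Positselski's Koszul-duality argument — and (ii) translating the purely algebraic hypotheses on $q_AM$, which is extracted from $M=\rmH^\bullet(C,P)$ only after the fact, into honest vanishing on the relevant page of the spectral sequence; making the very low-degree part of the argument close, where hypothesis~(\ref{ass:IsoAndMonoCondition}) must stand in for information that is not yet available, is the step requiring the most care.
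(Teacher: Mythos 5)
Your reduction is correct but carries little weight: under hypothesis~(\ref{ass:IsoAndMonoCondition}) the degree-$2$ isomorphism is indeed equivalent to $\rmH_{0,2}(A,M)=0$ (the paper obtains the isomorphism first and the vanishing as a corollary), so nothing is gained or lost there. The genuine gap is in your \lq\lq main step\rq\rq, which is a placeholder rather than an argument: you propose to rerun Positselski's filtration/spectral-sequence machinery and assert that \lq\lq bookkeeping the two gradings then forces $\rmH_{0,2}(A,M)=0$\rq\rq, but that bookkeeping is exactly the content of the statement. Crucially, this proposition is not Positselski's theorem verbatim: there the quadratic module is assumed Koszul, whereas hypothesis~(\ref{ass:TorVanishingCondition}) only demands the first off-diagonal vanishing $\rmH_{i,i+1}(A,\mathrm{q}_AM)=0$. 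Any proof must therefore locate precisely where full Koszulity enters Positselski's argument and check that the weaker hypothesis still suffices at that point; your proposal never does this, and the sentence \lq\lq hypothesis (1) kills exactly the entries that could obstruct a degree-$2$ class of $M$ from being decomposable\rq\rq\ asserts the conclusion rather than proving it. Your own \lq\lq expected main obstacle\rq\rq\ paragraph concedes that both the construction of the filtration and the translation of the hypotheses into vanishing on the relevant page are left open.

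For comparison, the paper's proof follows Positselski's proof and isolates the single point of difference: in its last paragraphs one needs that $\rmH_{2}(\mathrm{gr}_N C,\mathrm{gr}_N P)$ is concentrated in degree $2$; in the original setting this came from Koszulity of the quadratic comodule $\mathrm{q}_{\mathrm{gr}_N C}(\mathrm{gr}_N P)$, which is no longer available here. The replacement is a quadratic-duality argument: since $A$ is Koszul, an analog of Proposition~\ref{prop:SimultaniousKoszul} (the partial-vanishing form of Koszul duality, applied with $b=1$) converts $\rmH_{i,i+1}(A,\mathrm{q}_AM)=0$ for all $i$ into exactly that concentration statement, which the final two paragraphs of Positselski's argument then turn into the degree-$2$ isomorphism. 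This duality step is the one genuinely new ingredient of the proposition, and it is absent from your proposal. A secondary point: reducing to $C=\mathbb{F}_p(\!(\Gamma)\!)$ via the preceding Remark would only establish the special case needed for Theorem~\ref{thm:WeakKoszulBPC}, not the proposition as stated for arbitrary conilpotent coalgebras.
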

\begin{proof}
    The proof follows verbatim the one of \cite{Positselski2005}*{Theorem 2} by Positselski. The only difference is that in the last paragraphs the comodule ${\rm q}_{\gr_N C}(\gr_N P)$ is not Koszul, but an analog of Proposition~\ref{prop:SimultaniousKoszul} shows that $\rmH_{2}(\gr_N C,\gr_NP)$ is concentrated in degree $2$, which is --- by the discussion in the last two paragraphs of the proof --- sufficient to conclude the desired property.
\end{proof}

\begin{repthm}{thm:WeakKoszulBPC}
    Keep the notation of Theorem~\ref{thm:TorSES}. Assume $(\ker \psi^\bullet)(2)$ is a quadratic $A$-module and $H_{i,i+3}(\Lambda^\bullet(V),\ker\psi^\bullet)=0$ for all $i\in \NN_0$, then $(G,\theta)$ has the Bogomolov--Positselski property.
\end{repthm}

\begin{proof}
Following Remark~\ref{rem:Conilpotent group algebra}), we set $C:=\FF_p(\!(G(\theta))\!)$, and consider the discrete $G(\theta)$-module $P:=\rmH^1(K_\theta(G),\FF_p)$ as a comodule over $C$. First of all, $H^\bullet(C,\FF_p)\cong H^\bullet(G(\theta),\FF_p)\cong \Lambda^\bullet(V)$ is  Koszul.

Now set $M:=\rmH^\bullet(\FF_p(\!(G(\theta))\!),P)$. The arguments of the proof of Theorem~\ref{thm:TorSES} yield that $(\ker \psi^\bullet)_2\cong M_0$ and $(\ker \psi^\bullet)_3\cong M_1$. Since we assumed that $(\ker \psi^\bullet)(2)$ is quadratic, we have $(\ker \psi^\bullet)(2)\cong q_AM$. Hence, condition (\ref{ass:TorVanishingCondition}) of Proposition~\ref{thm:conilpotent-coalgebras} is satisfied. For condition (\ref{ass:IsoAndMonoCondition}), we notice that the composition $(\ker \psi^\bullet)_4\cong ({\rm q}_AM)_2\to M_2\to (\ker \inf^\bullet)_4$ is the identity and therefore ${\rm q}_AM\to M_2$ is a monomorphism.

Thus, Proposition~\ref{thm:conilpotent-coalgebras} yields $\rmH_{0,2}(\Lambda^\bullet(V),M)=0$. By the assumption that $(\ker \psi^\bullet)(2)$ is quadratic, we get that  
\begin{align*}
    H_{2,4}(\Lambda^\bullet(V),B)\cong H_{1,4}(\Lambda^\bullet(V),\ker \psi^\bullet)=0
\end{align*}
By Theorem~\ref{thm:TorSES} we get the Bogomolov--Positselski property.
\end{proof}
Note that the condition, that $(\ker \psi^\bullet)(2)$ is quadratic is very natural and expected in the Galois theoretic context, but not satisfied automatically for any ideal of $\Lambda^\bullet(V)$ generated in degree $2$. The following counter example is due to Simone Blumer and was privately communicated to the author.
\begin{exmp}
    \label{exmp:IdealNotQuadratic}
    Choose $V$ to be a vector space with basis $x,y,u,v$ over a field and $I$ to be the two-sided ideal of $\Lambda^\bullet(V)$ generated by $x\wedge y+u\wedge v$, then for any $0\neq t\in V$, we have $t\wedge(x\wedge y+u\wedge v)\neq 0$, so the quadratic part of $I$ would be free of rank $1$, but $I$ is not free as $x\wedge u\wedge (x\wedge y+u\wedge v)=0$.
\end{exmp}
\subsection{Theorem~\ref{thm:WeakKoszulBPC} depends on only three graded cohomology groups}
\label{sec:Computation}
The goal of this section is to show that by dualizing in an appropriate way, it is sufficient to compute three graded cohomology groups to verify the conditions of Theorem~\ref{thm:WeakKoszulBPC} for a homogeneous ideal $I$ of $\Lambda^\bullet(V)$ with $I_0=I_1=0$.

We will need the following small lemma:
\begin{lem}
\label{lem:monomorphismDual}
    Let $A$ be a quadratic $k$-algebra and $f:M\to N$ a monomorphism of quadratic $A$-modules, then the (quadratic) dual map $f^!:N^!_A\to M^!_A$ is an epimorphism of quadratic modules and its kernel is generated in degree $0$ by $\coker(f_1)^*$.
\end{lem}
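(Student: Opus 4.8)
The plan is to unwind the definition of the quadratic dual module and reduce everything to linear algebra on the generating/relation data. Recall that if $A=\{V,R\}$ with $R\subseteq V\otimes V$, a quadratic $A$-module is of the form $M=\langle H,K\rangle_A=(H\otimes A)/\langle K\rangle$ for some $K\subseteq H\otimes V$, and by definition $M^!_A=\langle H^*,K^\perp\rangle_{A^!}$, where $K^\perp\subseteq H^*\otimes V^*$ is the orthogonal complement under the pairing $(H\otimes V)\times(H^*\otimes V^*)\to k$. So the first step is: write $M=\langle H,K\rangle_A$ and $N=\langle H',K'\rangle_A$, and observe that a morphism of quadratic $A$-modules $f\colon M\to N$ is determined by (and in fact is the same data as) a linear map $f_0\colon H\to H'$ on degree-$0$ parts, subject to the compatibility condition $(f_0\otimes\mathrm{id}_V)(K)\subseteq K'$ (this is exactly what is needed for $f$ to descend to the quotients). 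This is a routine but essential bookkeeping step; I would state it as the key reduction.

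Next I would translate ``$f$ is a monomorphism'' into a statement about $f_0$ and the relation subspaces. Since $M$ is quadratic, $M$ is generated in degree $0$, and one checks on the bar/normalized-complex presentation — or more directly on the presentation $H\otimes A\to M$ — that $f$ injective implies $f_0\colon H\to H'$ is injective: indeed, if $f_0$ had a kernel, the corresponding submodule $(\ker f_0)\otimes A$ would (using quadraticity, or at worst genericity in low degrees) map to a nonzero piece of the kernel of $f$. Here I would be a little careful: the cleanest route is to note that injectivity of $f$ in degree $1$ already forces, together with $M$ quadratic so $M_1=(H\otimes V)/K$, that $f_0$ is injective and moreover that $K'\cap(f_0\otimes\mathrm{id}_V)(H\otimes V)=(f_0\otimes\mathrm{id}_V)(K)$, i.e. the relations of $N$ restricted to the ``sub'' generators are exactly those of $M$. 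So we may harmlessly identify $H$ with a subspace of $H'$ and $K$ with $K'\cap(H\otimes V)$.

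With this identification, dualizing is now pure linear algebra. The dual map $f^!\colon N^!_A=\langle (H')^*,K'^\perp\rangle_{A^!}\to M^!_A=\langle H^*,K^\perp\rangle_{A^!}$ is induced by the surjection $f_0^*\colon (H')^*\twoheadrightarrow H^*$ dual to the inclusion $f_0\colon H\hookrightarrow H'$; it is surjective because $f_0^*$ is surjective and both modules are generated in degree $0$, which proves the epimorphism claim. For the kernel: the kernel of $f^!$ is the $A^!$-submodule of $N^!_A$ generated by $\ker(f_0^*)=(H/H)^{*}$— more precisely by $(\mathrm{coker}\,f_0)^*=\mathrm{Ann}(H)\subseteq (H')^*$ — provided one checks that this submodule really is all of $\ker f^!$ in higher degrees, which follows from the compatibility $K\supseteq K'\cap(H\otimes V)$ established above together with the elementary fact that $(K')^\perp$ and $K^\perp$ differ by exactly the piece coming from $\mathrm{Ann}(H)\otimes V^*$. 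Since $\mathrm{coker}(f_1)$ is identified with $\mathrm{coker}(f_0)$ in degree $0$ (both modules being quadratic, hence generated in degree $0$, so the degree-$1$ cokernel is controlled by the degree-$0$ one — in fact $\mathrm{coker}(f)$ is itself quadratic and $\mathrm{coker}(f)_0=\mathrm{coker}(f_0)$), the kernel of $f^!$ is generated in degree $0$ by $\mathrm{coker}(f_1)^*$, as claimed.

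The main obstacle I anticipate is not conceptual but a matter of being scrupulous about two finite-dimensionality/quadraticity subtleties: first, that a monomorphism of quadratic modules really does restrict to an \emph{exact} relation-preserving inclusion on the $\{H,K\}$-data (so that the naive orthogonal-complement computation is valid), and second, that the submodule of $N^!_A$ generated by $(\mathrm{coker}\,f_0)^*$ exhausts $\ker f^!$ rather than being merely contained in it — this is where one genuinely uses that $A$ is quadratic (so the bar resolution is linear in the relevant bidegrees) and that we are in the locally finite-dimensional setting where $(\,\cdot\,)^\perp$ is an involution on subspaces. Both can be handled by the standard Koszul-duality dictionary of \cite{PositselskiPolishchuk2005}*{Chapter 1}, and I would cite that rather than re-derive it.
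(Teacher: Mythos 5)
Your plan takes a genuinely different route from the paper's: you unwind the presentations $M=\langle H,K\rangle_A$, $N=\langle H',K'\rangle_A$ and argue by linear algebra on the generating data, whereas the paper argues homologically, running the long exact sequences for $\rmH^{\ast}(A,\,\cdot\,)$ and $\rmH^{\ast}(A^!,\,\cdot\,)$ through the identification $\bigoplus_i\rmH^{i,i}\cong(\mathrm{q}\,\cdot\,)^!$ of Proposition~\ref{prop:CohomologyAndDuals}. Your reduction of a morphism of quadratic modules to the datum $f_0$ with $(f_0\otimes\id_V)(K)\subseteq K'$, the observation that injectivity of $f_1$ forces $K'\cap(H\otimes V)=K$, the surjectivity of $f^!$, and the identification of $(\ker f^!)_0$ and $(\ker f^!)_1$ are all correct.

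There are two problems with the kernel statement. First, showing that the submodule generated by $(\ker f^!)_0$ exhausts $\ker f^!$ in degrees $j\geq 2$ is the non-trivial content of the lemma, and the ``elementary fact'' you invoke (that $(K')^{\perp}$ and $K^{\perp}$ differ by the piece coming from $\mathrm{Ann}(H)\otimes V^*$) only controls degree $1$: in degree $j\geq 2$ one must compare the ideals $\langle (K')^{\perp}\rangle_j$ and $\langle K^{\perp}\rangle_j$ inside $(H')^*\otimes A^!_j$, which is no longer linear algebra on the quadratic data. The clean way to close this is the paper's: $M^!_A$ is quadratic by construction, so $\rmH_{1,j}(A^!,M^!_A)=0$ for $j\neq 1$ by Proposition~\ref{prop:QuadraticByCohomology}, and the long exact sequence for $0\to\ker f^!\to N^!_A\to M^!_A\to 0$ then yields $\rmH_{0,j}(A^!,\ker f^!)=0$ for $j\neq 0,1$, while $j=1$ is your degree-one computation (equivalently the injectivity of $f_1$). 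Second, your identification $\coker(f_1)\cong\coker(f_0)$ is false: take $M=0$, so that $\coker f_0=N_0$ while $\coker f_1=N_1$; and ``$\coker f$ is quadratic'' neither holds in general nor would give the claimed equality. Note, however, that your raw computation --- the kernel is generated in degree $0$ by $(\coker f_0)^*$ --- is the statement actually used downstream: in the proof of Proposition~\ref{prop:ThreeCohomologyGroupsAreEnough} the generator space for $\iota\colon I(2)\to L_2(2)$ is $(\Lambda^2(V)/I_2)^*=(\coker\iota_0)^*$. So the $f_1$ in the statement should be read as $f_0$; the fix is to correct the index, not to force the two cokernels to coincide.
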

\begin{proof}
    If we consider the long exact sequence of $\rmH^\ast(A,\bl)$ associated to the exact sequence $0\to M\to N\to \coker f\to 0$, we see that the following sequence is exact for every $i$ by Proposition~\ref{prop:CohomologyConcentrated} and \ref{prop:CohomologyAndDuals}.
    \begin{equation*}
        \begin{tikzcd}[row sep=tiny]
            \rmH^{i+1,i}(A,\coker f)\arrow[d,equal] &\arrow[l]\rmH^{i,i}(A,M)\arrow[d,equal]&\arrow[l]\rmH^{i,i}(A,N)\arrow[d,equal]\\
            0&\arrow[l](M^!_A)_i&\arrow[l,swap,"f^!_i"](N^!_A)_i
        \end{tikzcd}
    \end{equation*}
    Therefore, $f^!$ is an epimorphism. To see that $\ker f^!$ is generated in degree $0$, we use the long exact sequence for $\rmH^\ast({A^!},\bl)$ to get for any $i\neq 0$
    \begin{equation*}
        \begin{tikzcd}
            0\arrow[r]&\rmH^{0,i}({A^!},\ker f^!)\arrow[r]&\rmH^{1,i}({A^!},M^!_A)\arrow[r,"\alpha_i"]&\rmH^{1,i}({A^!},N^!_A)
        \end{tikzcd}
    \end{equation*}
    Since $N_A^!$ and $M_A^!$ are quadratic, we get $\rmH^{0,i}(A^!,\ker f^!)=0$ for $i\neq 0,1$. For $i=1$, we have $ \rmH^{1,1}(A,N_A^!)\cong N_1 $ and $\rmH^{1,1}(A,M_A^!)\cong M_1$. Furthermore, the map $\alpha_1$ agrees with $f_1$ and is injective. Therefore, $\rmH^{0,1}(A,\ker f^!)=0$ and $\ker f^!$ is generated in degree $0$. It is not difficult to derive the equality $\rmH^{0,0}(A,\ker f^!)\cong \coker(f_1)^*$ in a similar way.
\end{proof}
Let $V$ be a finite-dimensional $k$-vector space and set
\begin{align*}
    J:=\ker (\mathbb{S}^{\bullet-1}(V^*)\otimes V^*\to \mathbb{S}^{\bullet}(V^*)).
\end{align*}
Notice that $J_i=0$ if $i\leq 1$ and $J_2\subseteq V^\ast\otimes V^*$.
\begin{prop}
\label{prop:ThreeCohomologyGroupsAreEnough}
    Let $I$ be an ideal of $\Lambda^\bullet(V)$ such that $I_0=I_1=0$ and $I(2)$ is a quadratic $\Lambda^\bullet(V)$-module, then $\rmH^{i,i+3}(\Lambda^\bullet(V),I)=0$ if and only if
    \begin{align*}
        \rmH^{2}(\mathbb{S}^\bullet(V^*),J/\langle W^*\rangle)
    \end{align*}
    is concentrated in degree $4$, where $W:=\Lambda^2(V)/I_2$ is interpreted as a graded vector space concentrated in degree $2$. 
\end{prop}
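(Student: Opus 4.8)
The plan is to pass to the Koszul-dual side. Set $A:=\Lambda^\bullet(V)$; recall that $A$ is Koszul, its quadratic dual being the polynomial algebra $A^!=\mathbb{S}^\bullet(V^*)$. Put $M:=I(2)$, which by hypothesis is a quadratic $A$-module with $M_0=I_2$. Using the shift identity $\rmH^{i,j}(A,M)\cong\rmH^{i,j+2}(A,I)$, the vanishing $\rmH^{i,i+3}(\Lambda^\bullet(V),I)=0$ for all $i$ is the same as $\rmH^{i,i+1}(A,M)=0$ for all $i$. By Proposition~\ref{prop:SimultaniousKoszul} (with $b=1$ and $a$ ranging over $\NN_0$), this is equivalent to $\rmH^{i,j}(A^!,M^!_A)=0$ for all $i\le 2$ and all $j>i$; the cases $i=0$ and $i=1$ hold automatically because $M^!_A$ is again a quadratic module over $A^!$ (Proposition~\ref{prop:QuadraticByCohomology}, together with local finite-dimensionality, which is available since $V$ is finite-dimensional). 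Hence the condition in the Proposition is equivalent to
\[
\rmH^{2,j}(\mathbb{S}^\bullet(V^*),M^!_A)=0\qquad\text{for all }j>2 .
\]

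The heart of the argument is the identification of $M^!_A=(I(2))^!_{\Lambda^\bullet(V)}$. Since $I$ is an ideal of $\Lambda^\bullet(V)$, the structure map $I_2\otimes A_1\to M_1$ is the restriction of the wedge product $\Lambda^2(V)\otimes V\to\Lambda^3(V)$, so $M=\langle I_2,\,R\rangle_A$ with $R=\ker\!\big(I_2\otimes V\xrightarrow{\wedge}\Lambda^3(V)\big)$, and therefore $M^!_A=\langle I_2^*,\,R^\perp\rangle_{A^!}$. Writing $\iota\colon I_2\hookrightarrow\Lambda^2(V)$ and $\rho:=\iota^{\,*}\colon\Lambda^2(V^*)\twoheadrightarrow I_2^*$, one computes
\[
R^\perp=(\rho\otimes\mathrm{id})\big(\delta(\Lambda^3(V^*))\big)\subseteq I_2^*\otimes V^* ,
\]
where $\delta\colon\Lambda^3(V^*)\to\Lambda^2(V^*)\otimes V^*$ is the comultiplication, transpose to $\wedge$; here one uses that $(\ker\wedge)^\perp$ equals the image of the transpose map. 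On the other side, the Koszul complex of $\mathbb{S}^\bullet(V^*)$ identifies $J$ with the first syzygy module of the augmentation ideal of $\mathbb{S}^\bullet(V^*)$: it has a linear resolution and is generated in degree $2$ by $J_2=\Lambda^2(V^*)$, with degree-$3$ relations exactly $\delta(\Lambda^3(V^*))$, so $J(2)=\big(\Lambda^2(V^*)\otimes\mathbb{S}^\bullet(V^*)\big)/\langle\delta(\Lambda^3(V^*))\rangle$.

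Now $W^*=I_2^\perp=\ker\rho\subseteq\Lambda^2(V^*)=J(2)_0$. Dividing $J(2)$ by the submodule generated in degree $0$ by $W^*$ first collapses $\Lambda^2(V^*)\otimes\mathbb{S}^\bullet(V^*)$ onto $I_2^*\otimes\mathbb{S}^\bullet(V^*)$ via $\rho\otimes\mathrm{id}$, and then kills $(\rho\otimes\mathrm{id})(\delta(\Lambda^3(V^*)))$; comparing with the presentation of $M^!_A$ obtained above gives $M^!_A\cong J(2)/\langle W^*\rangle=(J/\langle W^*\rangle)(2)$. (This identification can also be repackaged via Lemma~\ref{lem:monomorphismDual} applied to the monomorphism $I(2)\hookrightarrow\Lambda^\bullet(V)(2)$, whose degree-$0$ cokernel is $W$ and whose target has $\big(\Lambda^\bullet(V)(2)\big)^!_{\Lambda^\bullet(V)}\cong J(2)$.) Substituting into the displayed condition, $\rmH^{2,j}(\mathbb{S}^\bullet(V^*),M^!_A)\cong\rmH^{2,j+2}(\mathbb{S}^\bullet(V^*),J/\langle W^*\rangle)$, so it becomes $\rmH^{2,m}(\mathbb{S}^\bullet(V^*),J/\langle W^*\rangle)=0$ for all $m>4$. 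As $J/\langle W^*\rangle$ is concentrated in degrees $\ge 2$, Proposition~\ref{prop:CohomologyConcentrated} forces this group to vanish already for $m<4$; hence the condition is precisely that $\rmH^{2}(\mathbb{S}^\bullet(V^*),J/\langle W^*\rangle)$ be concentrated in degree $4$, which is the assertion.

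I expect the main obstacle to be this explicit identification $M^!_A\cong(J/\langle W^*\rangle)(2)$: one must carefully track orthogonal complements under the pairing between $\Lambda^2(V)\otimes V$ and $\Lambda^2(V^*)\otimes V^*$, identify $(\ker\wedge)^\perp$ with $\operatorname{im}\delta$, and describe $J$ through the Koszul complex of $\mathbb{S}^\bullet(V^*)$ (and, for the alternative route, verify that $\Lambda^\bullet(V)(2)$ is itself a quadratic $\Lambda^\bullet(V)$-module). Once this is settled, the remaining steps — the two shift identities, the use of Proposition~\ref{prop:SimultaniousKoszul}, and the concentration argument — are routine.
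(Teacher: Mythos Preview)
Your argument is correct and follows the same overall strategy as the paper: pass to the Koszul dual via Proposition~\ref{prop:SimultaniousKoszul} (with $b=1$), identify $I(2)^!_{\Lambda^\bullet(V)}$ with $(J/\langle W^*\rangle)(2)$, and then use the shift and the concentration bound. The only difference is in how the key identification $I(2)^!\cong(J/\langle W^*\rangle)(2)$ is established. The paper introduces the truncated module $L_2$ defined by $(L_2)_i=\Lambda^i(V)$ for $i\ge2$ and $0$ otherwise, computes $L_2(2)^!\cong J(2)$ from the two short exact sequences $0\to L_1\to\Lambda^\bullet(V)\to k\to0$ and $0\to L_2\to L_1\to V(-1)\to0$, and then applies Lemma~\ref{lem:monomorphismDual} to the inclusion $I(2)\hookrightarrow L_2(2)$. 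You instead compute $I(2)^!$ directly from its quadratic presentation, tracking $R^\perp$ via the transpose of the wedge map and the Koszul-complex description of $J$; this is a bit more hands-on but entirely equivalent. Your parenthetical ``alternative route'' is in fact exactly the paper's argument, except that the correct target of the monomorphism is $L_2(2)$, not $\Lambda^\bullet(V)(2)$: the latter is nonzero in negative degrees and hence not a quadratic module in the sense used here, which is precisely the point you anticipated needing to check.
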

\begin{rem}
    This shows, that three graded cohomology groups are sufficient to determine, whether the conditions of Theorem~\ref{thm:WeakKoszulBPC} are satisfied for an ideal $I$ of $\Lambda^2(V)$, namely
    \begin{align*}
        \rmH^0(\Lambda^\bullet(V),I),\quad \rmH^1(\Lambda^\bullet(V),I),\quad\text{ and}\quad \rmH^2(\mathbb{S}^\bullet(V^*),J/\langle W^*\rangle).
    \end{align*}
\end{rem}
\begin{proof}[Proof of \ref{prop:ThreeCohomologyGroupsAreEnough}]
    When considering quadratic duals, we suppress the respective algebra in the notation, as the dual is always intended with respect to $\Lambda^\bullet(V)$.
    
    We first of all show, that $J(2)$ is the quadratic dual of the $\Lambda^\bullet(V)$-module $L_2(2)$, where $L_k$ is defined by $(L_k)_i=\Lambda^i(V)$ if $i\geq k$ and $L_i=0$ otherwise. By \cite{PositselskiPolishchuk2005}*{Chapter 2, Prop. 1.1} the modules $L_k(k)$ are Koszul. The short exact sequence $0\to L_1\to \Lambda^\bullet(V)\to k\to 0$ shows that
    \begin{align*}
        \rmH^{i,i+1}(\Lambda^\bullet(V),L_1)\cong \rmH^{i+1,i+1}(\Lambda^\bullet(V),k)=\mathbb{S}^{i+1}(V^*).
    \end{align*}
    By the long exact sequence associated to $0\to L_2\to L_1\to V(-1)\to 0$ one deduces
    \begin{align}
    \label{eq:truncatedDual}
        \rmH^{i,i}(\Lambda^\bullet(V),L_2(2))\cong \rmH^{i,i+2}(\Lambda^\bullet(V),L_2)\cong \ker (\mathbb{S}^{i+2}(V^*)\otimes V^*\to \mathbb{S}^{i+3}(V^*)).
    \end{align}
    This shows by Proposition~\ref{prop:CohomologyAndDuals}, that $J(2)$ is the dual of $L_2(2)$. Now consider the inclusion map $\iota:I\to L_2$. Then by Lemma~\ref{lem:monomorphismDual} implies that the following sequence is exact
    \begin{align*}
        (\Lambda^2(V)/I_2)^*\otimes \mathbb{S}^\bullet(V)\longrightarrow L_2(2)^!\cong J(2)\overset{\iota(2)^!}\longrightarrow I(2)^!\longrightarrow 0
    \end{align*}
    and thus $I(2)^!\cong J(2)/\langle W^*\rangle$. By Proposition~\ref{prop:SimultaniousKoszul} applied with $a=\infty$ and $b=1$ one sees that $\rmH^{i,i+3}(\Lambda^\bullet(V),I)=0$ for all $i$ if and only if for all $j>4$
    \begin{align*}
        0=\rmH^{2,j}(\mathbb{S}^\bullet(V^*),I(2)^!)\cong \rmH^{2,j}(\mathbb{S}^\bullet(V^*),J/\langle W^*\rangle )
    \end{align*}
    as claimed.
\end{proof}
\begin{rem}
    Proposition~\ref{prop:ThreeCohomologyGroupsAreEnough} yields an algorithmic way to check the conditions of Theorem~\ref{thm:WeakKoszulBPC} in finite time. Several computer algebra systems are capable of computing graded cohomology groups in reasonable time.

    We implemented this method to search for examples of ideals in $\Lambda^\bullet(V)$ satisfying the conditions of Theorem~\ref{thm:WeakKoszulBPC}, but that were not Koszul. We were not able to find one until now.
\end{rem}

\section{The Module Koszulity Conjecture for oriented pro-\texorpdfstring{$p$}{} groups of elementary type}
\label{sec:ModuleKoszulityForElementaryType}

\begin{defn}
    A Demushkin group is a finitely generated pro-$p$ group $G$ of cohomological dimension $2$ (i.e. $\rmH^i(G,\FF_p)=0$ for $i>2$) with $\rmH^2(G,\FF_p)\cong \FF_p$ such that the cup product induces a non-degenerate bilinear pairing
    \begin{align*}
        \rmH^1(G,\FF_p)\otimes \rmH^1(G,\FF_p)\to \rmH^2(G,\FF_p).
    \end{align*}
\end{defn}
It turns out that there is exactly one orientation $\eth$ for a Demushkin group that turns $(G,\eth)$ into a Kummerian oriented pro-$p$ group (see \cite{QuadrelliWeigel2020}*{Proposition 5.2}).

We can now define the class of oriented pro-$p$ groups of elementary type.
\begin{defn}
    \label{def:ElementaryType}
    Let $\mathcal{ET}_p$ be the smallest class of oriented pro-$p$ groups satisfying the following conditions:
    \begin{enumerate}
        \item $\mathcal{ET}_p$ contains $\ZZ_p$ with any orientation $\theta:\ZZ_p\to \ZZ_p^\times$;
        \item $\mathcal{ET}_p$ contains all Demushkin groups $G$ with their canonical orientation $\eth$ making $(G,\eth)$ into a Kummerian oriented pro-$p$ group;
        \item \label{it:freeprod} if $(G_1,\theta_1)$, $(G_2,\theta_2)\in \mathcal{ET}_p$, then $(G_1*_pG_2,\theta_1*\theta_2)$ is also contained in $\mathcal{ET}_p$;
        \item \label{it:semidirectprod} if $(G,\theta)$ is in $\mathcal{ET}_p$ and $A$ is a finitely generated free abelian pro-$p$ group, then $(A\rtimes_\theta G,\theta\circ \pi_2)$ is also contained in $\mathcal{ET}_p$.
    \end{enumerate}
    An oriented pro-$p$ group in $\mathcal{ET}_p$ is said to be of \emph{elementary type}.
\end{defn}
Our goal is to show the following theorem:
\begin{thm}
    \label{thm:ElementaryTypeKoszul}
    Let $(G,\theta)$ be a torsion-free oriented pro-$p$ group of elementary type, then $(\ker \psi_G^\bullet)(2)$ is Koszul.
\end{thm}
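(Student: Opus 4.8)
The plan is to prove the statement by structural induction on the construction of $(G,\theta)$ in $\mathcal{ET}_p$, mirroring the inductive definition of the class $\mathcal{ET}_p$. Throughout, I will write $V_G := \rmH^1(G,\FF_p)$, $A_G := \Lambda^\bullet(V_G)$, and $M_G := (\ker \psi_G^\bullet)(2)$, which is a graded module over $A_G$. The base cases are $\ZZ_p$ with any orientation and Demushkin groups with their canonical orientation. For $\ZZ_p$ one has $\rmH^\bullet(\ZZ_p,\FF_p) \cong \Lambda^\bullet(\FF_p)$ and $\psi^\bullet$ is an isomorphism, so $\ker\psi^\bullet = 0$ and the module is trivially Koszul. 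For a Demushkin group $G$ with $n$ generators, $\rmH^\bullet(G,\FF_p)$ is the quadratic algebra with a single quadratic relation given by the non-degenerate pairing, so $\ker\psi^\bullet$ is the principal ideal of $\Lambda^\bullet(V_G)$ generated in degree $2$ by that relation; one checks directly (or cites the known computation of $\rmH^\bullet$ of Demushkin groups together with Proposition~\ref{prop:SimultaniousKoszul} on the dual side, where the dual module is a cyclic module over the polynomial ring $\mathbb{S}^\bullet(V_G^*)$) that $M_G(2)$, equivalently its quadratic dual, is Koszul.

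For the inductive step I treat the two operations (\ref{it:freeprod}) and (\ref{it:semidirectprod}) separately. For a free pro-$p$ product $G = G_1 *_p G_2$, one has $V_G \cong V_{G_1} \oplus V_{G_2}$ and $\rmH^\bullet(G,\FF_p) \cong \rmH^\bullet(G_1,\FF_p) \sqcup \rmH^\bullet(G_2,\FF_p)$ in degrees $\geq 1$ (the coproduct of augmented algebras), while $A_G = \Lambda^\bullet(V_{G_1}\oplus V_{G_2}) \cong A_{G_1} \otimes^{-1} A_{G_2}$. The key point is to identify $M_G$ in terms of $M_{G_1}$ and $M_{G_2}$: since $\ker\psi_{G}^\bullet$ receives contributions from $\ker\psi_{G_i}^\bullet$ together with all the "cross terms" $V_{G_1}\wedge V_{G_2}$ in $\Lambda^2$ that die in the coproduct, one expects a description of $M_G$ built from $M_{G_1}$, $M_{G_2}$, and free modules, so that Koszulity follows from Proposition~\ref{prop:CohomologyOfTensorProduct} (applied on the quadratic-dual side, using Proposition~\ref{prop:SimultaniousKoszul}), together with the stability of Koszulity under extensions by Koszul modules. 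For the semidirect product $A \rtimes_\theta G$ with $A$ free abelian of finite rank, one uses that $(A\rtimes_\theta G)(\theta)$ and the relevant cohomology decompose — here $\rmH^\bullet(A\rtimes_\theta G,\FF_p)$ is computed via the Hochschild–Serre spectral sequence for $1 \to A \to A\rtimes_\theta G \to G \to 1$, and because $\theta$ twists the $A$-action in exactly the way that makes the extension "$\theta$-abelian on the $A$ part", the maximal $\theta$-abelian quotient and hence $\ker\psi^\bullet$ again admits a tensor-type decomposition; Koszulity is then preserved by Proposition~\ref{prop:CohomologyOfTensorProduct}.

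I expect the main obstacle to be the bookkeeping in the two inductive steps: precisely identifying the graded $A_G$-module $M_G = (\ker\psi_G^\bullet)(2)$ in terms of the modules attached to the building blocks, including correctly accounting for the cross terms in degree $2$ (which, in the dual picture, correspond to quotienting by a subspace generated in degree $0$, exactly as in Lemma~\ref{lem:monomorphismDual} and Proposition~\ref{prop:ThreeCohomologyGroupsAreEnough}). Once that identification is in place, the Koszulity is a formal consequence of the tensor-product and duality machinery of Section~\ref{sec:CohomologyOfGradedAlgebras}, using repeatedly that $\Lambda^\bullet(V)$ and $\mathbb{S}^\bullet(V^*)$ are Koszul, that Koszulity transfers across quadratic duality (Proposition~\ref{prop:SimultaniousKoszul}), and that a module sitting in a short exact sequence between Koszul modules over a Koszul algebra is Koszul. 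A secondary technical point is verifying at each stage that $M_G(2) = q_{A_G} M_G(2)$, i.e. that $\ker\psi_G^\bullet$ really is quadratic as an $A_G$-module; for groups of elementary type this should follow from the corresponding PBW/Koszul structure of $\rmH^\bullet(G,\FF_p)$ established in \cite{MinacPasiniQuadrelliTan2022}, so I would invoke that result to dispatch the quadraticity hypothesis rather than re-proving it by hand.
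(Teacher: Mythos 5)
Your overall skeleton --- structural induction over the construction of $\mathcal{ET}_p$, with $\ZZ_p$ and Demushkin groups as base cases and the two closure operations as inductive steps, using quadratic duality and the K\"unneth formula of Proposition~\ref{prop:CohomologyOfTensorProduct} --- is exactly the strategy of the paper (Propositions~\ref{prop:DemushkinKoszul}, \ref{prop:FreeProductKoszul}, \ref{prop:SemiDirectKoszul}). However, both places where real work is required are left as expectations, and in one of them the route you sketch is likely to run into trouble. For the Demushkin base case, observing that the relevant dual object is a cyclic module over $\mathbb{S}^\bullet(V^*)$ proves nothing: cyclic modules over polynomial rings need not be Koszul, and the non-degeneracy of the cup-product pairing must enter somewhere. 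The paper's argument places $I=(\ker\psi^\bullet)(2)$ in the exact sequence $0\to I\to L_2(2)\to\FF_p\to 0$ (with $L_2$ the degree-$\geq 2$ truncation of $\Lambda^\bullet(V)$); the long exact sequence only gives that $\rmH^i(\Lambda^\bullet(V),I)$ is concentrated in degrees $i$ and $i+1$, and killing the degree-$(i+1)$ part requires the injectivity of an explicit map $\alpha_{i+1}:\mathbb{S}^{i+1}(V^*)\to\mathbb{S}^{i+2}(V^*)\otimes V^*$, which is checked using the standard symplectic-type basis of $\rmH^1$ of a Demushkin group (\cite{NSW2008}*{Proposition 3.9.16}). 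This also shows that your general principle ``a module between two Koszul modules is Koszul'' does not formally dispatch this step: $I$ is a kernel, not an extension, and the degrees do not line up for free.

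For the free product, the step ``one expects a description of $M_G$ built from $M_{G_1}$, $M_{G_2}$, and free modules'' is precisely the hard part, and it is doubtful that $\ker\psi_G^\bullet$ admits a clean direct-sum or tensor decomposition as an ideal of $\Lambda^\bullet(V_{G_1}\oplus V_{G_2})$: already in degree $3$ the cross terms $V_{G_1}\wedge(\ker\psi_{G_2}^\bullet)_2$ etc.\ interact with the ideal structure. The paper sidesteps this entirely: instead of decomposing the kernel, it uses the exact sequence $0\to\rmH^\bullet(G_1*_pG_2)\to\rmH^\bullet(G_1)\oplus\rmH^\bullet(G_2)\to\FF_p\to 0$ of \cite{NSW2008}*{Theorem 4.1.4}, regards $\rmH^\bullet(G_i)$ as $\rmH^\bullet(G_1)\otimes^{-1}\FF_p$ (resp.\ $\FF_p\otimes^{-1}\rmH^\bullet(G_2)$) over $\Lambda_1\otimes^{-1}\Lambda_2$, and computes $\rmH_j(\Lambda,\rmH^\bullet(G_i))$ by Proposition~\ref{prop:CohomologyOfTensorProduct}; Koszulity of $(\ker\psi_G^\bullet)(2)$ is then read off from the concentration of $\rmH_j(\Lambda,\rmH^\bullet(G))$ in degree $j+1$. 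I would recommend adopting that formulation rather than attempting the direct identification. Your semidirect-product step is essentially the paper's (via $\rmH^\bullet(A\rtimes_\theta G_0,\FF_p)\cong\rmH^\bullet(G_0,\FF_p)\otimes^{-1}\Lambda^\bullet(A/pA)$), and the worry about quadraticity of $\ker\psi_G^\bullet$ is moot: Koszulity in the sense of this paper implies quadraticity by Proposition~\ref{prop:QuadraticByCohomology}, so no appeal to the PBW property of \cite{MinacPasiniQuadrelliTan2022} is needed.
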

The proof is structured in multiple steps. It is clear that $\ZZ_p$ with any torsion-free orientation satisfies the theorem, since $\rmH^\bullet(\ZZ_p,\FF_p)\cong \Lambda^\bullet(\rmH^1(\ZZ_p,\FF_p))$. Next, we prove that the statement is true for Demushkin groups, and we show that the condition is stable under the operations (\ref{it:freeprod}) and (\ref{it:semidirectprod}) of Definition~\ref{def:ElementaryType}. 

The condition that $(G,\theta)$ is torsion-free only poses a restriction in the case where $p=2$. The image of $\theta_1*\theta_2$ is $\langle \im(\theta_1),\im(\theta)_2\rangle$, and thus contained in $1+4\ZZ_2$ if and only if the images of both $\theta_1$ and $\theta_2$ are contained in $1+4\ZZ_2$. Furthermore, (\ref{it:semidirectprod}) preserves the image of $\theta$. Thus it is sufficient to start in any case with torsion-free oriented groups, when proving the property for the \lq\lq building blocks\rq\rq\, of groups of elementary type.

\begin{prop}
\label{prop:DemushkinKoszul}
    Let $(G,\eth)$ be a Demushkin group whose natural orientation is torsion-free; then the module $(\ker \psi^\bullet)(2)$ is Koszul over $\Lambda^\bullet(\rmH^1(G,\FF_p))$.
\end{prop}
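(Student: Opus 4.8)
The plan is to compute the relevant cohomology algebra and module for a Demushkin group explicitly, and then invoke the simultaneous-Koszulity machinery of Proposition~\ref{prop:SimultaniousKoszul}. First I would recall the structure of $\rmH^\bullet(G,\FF_p)$ for a Demushkin group $G$: by definition $G$ has cohomological dimension $2$, $\rmH^2(G,\FF_p)\cong\FF_p$, and the cup product $\rmH^1\otimes\rmH^1\to\rmH^2$ is a non-degenerate pairing. Writing $V:=\rmH^1(G,\FF_p)$ with $n:=\dim V$, this says that $B:=\rmH^\bullet(G,\FF_p)\cong\{V,R\}$ where $R\subseteq V\otimes V$ is the kernel of the symmetric (or, in the $p=2$ torsion-free case, still symmetric after the orientation normalization) pairing defining the single relation; concretely $B$ is a quadratic algebra with one-dimensional degree-$2$ part and vanishing in degrees $\geq 3$. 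Since $(G,\eth)$ is Kummerian and torsion-free with $G(\eth)\cong\ZZ_p^n$, the map $\psi^\bullet\colon\Lambda^\bullet(V)\to B$ is the canonical surjection, and $\ker\psi^\bullet$ is the ideal of $\Lambda^\bullet(V)$ generated in degree $2$ by a codimension-one subspace of $\Lambda^2(V)$ (namely $\ker(\Lambda^2 V\to\rmH^2(G,\FF_p))$), together with all of $\Lambda^{\geq 3}(V)$.

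Next I would identify the quadratic dual. The module $(\ker\psi^\bullet)(2)$ is a quadratic $\Lambda^\bullet(V)$-module — this needs a short argument, but it follows because $\ker\psi^\bullet$ agrees in degrees $2$ and $3$ with the ideal generated by its degree-$2$ part (in degree $3$ both equal $\Lambda^3 V$, as $B_3=0$ forces the degree-$2$ relations to generate everything in degree $3$); alternatively one reads it off from the fact that $B$ is a one-relator quadratic algebra and the kernel of a surjection of quadratic algebras $\Lambda^\bullet(V)\twoheadrightarrow B$ shifted by $2$ is quadratic when $B$ is Koszul, which it is here since $B$ is a Poincaré duality algebra of dimension $2$ hence Koszul (or: $B^! \cong \mathbb{S}^\bullet(V^*)/(\text{one quadric dual to }R^\perp)$, a hypersurface ring with a single quadratic relation, which is Koszul). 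Using Proposition~\ref{prop:CohomologyAndDuals} together with the exact sequence $0\to(\ker\psi^\bullet)(2)\to\Lambda^\bullet(V)(2)\to B(2)\to 0$, the quadratic dual of $(\ker\psi^\bullet)(2)$ over $\Lambda^\bullet(V)^!\cong\mathbb{S}^\bullet(V^*)$ is computed as a shift of the kernel of $B^!\to\mathbb{S}^\bullet(V^*)$, i.e.\ it is (a shift of) the ideal in $\mathbb{S}^\bullet(V^*)$ generated by the single dual quadric $q\in\mathbb{S}^2(V^*)$. Concretely $((\ker\psi^\bullet)(2))^!_{\Lambda^\bullet(V)}\cong (q)\cdot\mathbb{S}^\bullet(V^*)$ with an appropriate grading shift.

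Then I would apply Proposition~\ref{prop:SimultaniousKoszul}: since $\Lambda^\bullet(V)$ is Koszul and $(\ker\psi^\bullet)(2)$ is a quadratic module, it is Koszul over $\Lambda^\bullet(V)$ if and only if its dual is Koszul over $\mathbb{S}^\bullet(V^*)$. So the task reduces to showing that the principal ideal generated by a single nonzero quadric $q$ in a polynomial ring $\mathbb{S}^\bullet(V^*)\cong k[x_1,\dots,x_n]$ is a Koszul module. This is a known and elementary fact: $q$ is a non-zero-divisor, so $(q)\cong\mathbb{S}^\bullet(V^*)(-2)$ as a module, i.e.\ $(q)$ is a free (shifted) module, which is trivially Koszul. (One should double-check that $q\neq 0$ — this is exactly the non-degeneracy of the Demushkin pairing transported to the dual side — and that the single relation really does make $B^!$ a hypersurface rather than something with extra relations, which again uses $\dim\rmH^2(G,\FF_p)=1$.) Hence $(\ker\psi^\bullet)(2)$ is Koszul.

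The main obstacle I anticipate is the bookkeeping around the $p=2$ case and the precise form of $R$: when $p=2$ the natural pairing on $\rmH^1$ need not be symmetric in general, but the hypothesis that the orientation $\eth$ is torsion-free is precisely what is needed (via \cite{QuadrelliWeigel2020}*{Proposition 5.2}) to ensure we are in the situation where $B$ is a genuine quadratic Poincaré duality algebra with the expected one-dimensional socle, so that the dual is a hypersurface and the ``non-zero-divisor'' step goes through. A secondary technical point is verifying carefully that $(\ker\psi^\bullet)(2)$ is quadratic before invoking Proposition~\ref{prop:SimultaniousKoszul}; I expect this to follow cleanly from $B_{\geq 3}=0$ and the one-relator structure, as sketched above, but it deserves an explicit sentence rather than being taken for granted.
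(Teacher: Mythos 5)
Your overall strategy---pass to the quadratic dual over $\mathbb{S}^\bullet(V^*)$ via Proposition~\ref{prop:SimultaniousKoszul} and finish with a non-zero-divisor argument---is in the right spirit, but your identification of the dual module is wrong, and the final step collapses with it. By definition the quadratic dual of $M:=(\ker\psi^\bullet)(2)=\langle M_0,(J_M)_1\rangle$ is $\langle M_0^*,(J_M)_1^\perp\rangle_{\mathbb{S}^\bullet(V^*)}$, so its degree-zero component is $M_0^*=\bigl((\ker\psi^\bullet)_2\bigr)^*$, of dimension $\binom{n}{2}-1$ with $n=\dim V$; a principal ideal $(q)\subseteq\mathbb{S}^\bullet(V^*)$ is generated by a \emph{single} element, so the two cannot agree as soon as $n\geq 3$, i.e.\ whenever $\ker\psi^\bullet\neq 0$. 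The slip originates in the claim that $B^!$ is a commutative hypersurface ring: in fact $B^!\cong\mathbb{T}^\bullet(V^*)/(R^\perp)$ with $R^\perp$ one-dimensional, so $\dim B^!_2=n^2-1$, which exceeds $\dim\mathbb{S}^2(V^*)=\binom{n+1}{2}$ for $n\geq 3$; this is a noncommutative one-relator algebra, not $\mathbb{S}^\bullet(V^*)/(q)$. The correct dual---obtained by applying Lemma~\ref{lem:monomorphismDual} to the inclusion $(\ker\psi^\bullet)(2)\hookrightarrow L_2(2)$ exactly as in Proposition~\ref{prop:ThreeCohomologyGroupsAreEnough}---is the quotient $J(2)/\langle q\rangle$ of the Koszul syzygy module $J=\ker(\mathbb{S}^{\bullet-1}(V^*)\otimes V^*\to\mathbb{S}^\bullet(V^*))$ by the submodule generated by one degree-zero element dual to the Demushkin relation: a quotient by one generator, not a principal ideal. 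That module is certainly not free: if it were, its double dual $(\ker\psi^\bullet)(2)$ would be a trivial $\Lambda^\bullet(V)$-module, contradicting $(\ker\psi^\bullet)_3=\Lambda^3(V)\neq 0$. So the reduction to ``a principal ideal generated by a non-zero-divisor is free, hence Koszul'' does not apply, and establishing Koszulity of $J(2)/\langle q\rangle$ is essentially the original problem restated on the dual side.

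The paper instead stays on the exterior-algebra side: from the short exact sequence $0\to(\ker\psi^\bullet)(2)\to L_2(2)\to\FF_p\to 0$ (which encodes $\dim B_2=1$ and $B_{\geq 3}=0$) and the Koszulity of $L_2(2)$ and $\FF_p$, the only possible obstruction is the failure of injectivity of an explicit connecting map $\alpha_{i+1}:\mathbb{S}^{i+1}(V^*)\to\mathbb{S}^{i+2}(V^*)\otimes V^*$, namely multiplication by the element dual to the Demushkin pairing. Injectivity is then checked using the normal form of the cup product from \cite{NSW2008}*{Proposition 3.9.16} by post-composing with $\id\otimes\chi_1$, which turns $\alpha_{i+1}$ into multiplication by a single variable of $\mathbb{S}^\bullet(V^*)$---so your non-zero-divisor intuition does do the real work, just one homological step away from where you placed it. Your remaining concerns (quadraticity of $(\ker\psi^\bullet)(2)$, the role of torsion-freeness when $p=2$) are legitimate and would need genuine arguments; in the paper's route the quadraticity comes out as a byproduct of the concentration of $\rmH^i(\Lambda^\bullet(V),(\ker\psi^\bullet)(2))$ in degree $i$ rather than being an input.
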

\begin{proof}
    Set $V:=\rmH^1(G,\FF_p)$ and define $L_k$ as in the proof of Proposition \ref{prop:ThreeCohomologyGroupsAreEnough}. We get a short exact sequence of $\Lambda^\bullet(V)$-modules.
    \begin{equation*}
        \begin{tikzcd}
            0\arrow[r]&I:=(\ker \psi^\bullet)(2)\arrow[r]&L_2(2)\arrow[r]&\FF_p \arrow[r]&0
        \end{tikzcd}
    \end{equation*}
    Since $L_2(2)$ and $\FF_p$ are both Koszul modules, the long exact sequence for $\rmH^*(\Lambda^\bullet(V),\bl)$ shows that $\rmH^{i}({\Lambda^\bullet(V)},I)$ is concentrated in degrees $i$ and $i+1$. To show that we have vanishing in degree $i+1$, we use the following diagram with exact top row:
    \begin{equation*}
        \begin{tikzcd}[column sep=small]
            0\arrow[r]&\rmH^{i,i+1}(\Lambda^\bullet(V),I)\arrow[r]&\rmH^{i+1,i+1}(\Lambda^\bullet(V), \FF_p)\arrow[r]\arrow[d,equal]&\rmH^{i+1,i+1}(\Lambda^\bullet(V),L_2(2))\arrow[d,hookrightarrow,"(\ref{eq:truncatedDual})"]\\
            &&\mathbb{S}^{i+1}(V^*)\arrow[r,"\alpha_{i+1}"]&\mathbb{S}^{i+2}(V^*)\otimes V^*
        \end{tikzcd}
    \end{equation*}
    Thus we can conclude that $I$ is Koszul if and only if $\alpha_{i+1}$ is injective for all $i$.
    
    Since the map $\alpha$ comes from taking quadratic duals of $L_2(2)\to \FF_p$, it is induced by the dual of the multiplication map $\Lambda^2(V)\to \FF_p$. Using \cite[Proposition 3.9.16]{NSW2008}, we can choose a basis $\chi_1,...,\chi_d$ of $V$, such that
    \begin{align*}
        1=\chi_i\cup \chi_{i+1}=-\chi_{i+1}\cup \chi_i\qquad \text{for all }i=1,...,d-1
    \end{align*}
    and the product $\chi_i\cup \chi_j$ is $0$ in all other cases. If we denote by $x_1,..,x_d$ the dual basis of $V^*$. Then we can write $\alpha_{i+1}$ explicitly as
    \begin{align*}
        \alpha_{i+1}:\mathbb{S}^{i+1}(V^*)\to \mathbb{S}^{i+2}(V^*)\otimes V^*, \quad f\mapsto \sum_{i=1}^{d-1}(x_if)\otimes x_{i+1}-(x_{i+1} f)\otimes x_i.
    \end{align*}
    It is easy to see that $\alpha_{i+1}$ is injective by composing it with $\id \otimes \chi_1$, where we interpret $\chi_1$ as an element of $(V^*)^*$. Thus, $\alpha_{i+1}$ is injective and $I$  Koszul.
\end{proof}

\begin{prop}
\label{prop:FreeProductKoszul}
    Let $(G_1,\theta_1)$ and $(G_2,\theta_2)$ be Kummerian, torsion-free, oriented pro-$p$ groups with each $G_i$ being $\rmH^\bullet$-quadratic. Set $(G,\theta)=(G_1,\theta_1)*_p(G_2,\theta_2)$. Assume that $(\ker \psi^\bullet_{G_k})(2)$ is also a Koszul module over $\Lambda^\bullet(\rmH^1(G_k,\FF_p))$ for $k=1,2$, then $(\ker \psi_G^\bullet)(2)$ is a Koszul module over $\Lambda^\bullet(\rmH^1(G,\FF_p))$.
\end{prop}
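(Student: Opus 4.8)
The plan is to realise $I:=\ker\psi_G^\bullet$, as a module over $A:=\Lambda^\bullet(V)$ with $V:=\rmH^1(G,\FF_p)$, as an iterated extension whose successive quotients are built out of $I_k:=\ker\psi_{G_k}^\bullet$ and exterior algebras via Construction~\ref{con:TensorProductOfAlgebras}, and then to compute the homology of each quotient with Proposition~\ref{prop:CohomologyOfTensorProduct}. Throughout write $V_k:=\rmH^1(G_k,\FF_p)$, $B_k:=\rmH^\bullet(G_k,\FF_p)\cong\Lambda^\bullet(V_k)/I_k$ and $B:=\rmH^\bullet(G,\FF_p)$.

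First I would record the structure of the free pro-$p$ product. One has $V=V_1\oplus V_2$, so $A=\Lambda^\bullet(V_1)\otimes^{-1}\Lambda^\bullet(V_2)$, while the cup product identifies $B$ with the coproduct $B_1\sqcap B_2$ of connected graded algebras: $B_n=(B_1)_n\oplus(B_2)_n$ for $n\ge 1$, with the products of $B_1$ and $B_2$ unchanged and $(B_1)_+\cdot(B_2)_+=0$. Since a coproduct of quadratic algebras is quadratic, $G$ is again $\rmH^\bullet$-quadratic, and it is again torsion-free and Kummerian, so $\psi_G^\bullet$ is defined and is the canonical surjection $A\twoheadrightarrow B$, with kernel $I$ the full ideal of relations. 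A short computation then identifies
\begin{equation*}
  I=(I_1)+(I_2)+\mathfrak m,\qquad \mathfrak m:=(\Lambda^\bullet(V_1))_+\otimes^{-1}(\Lambda^\bullet(V_2))_+,
\end{equation*}
where $(I_k)$ is the two-sided ideal of $A$ generated by $I_k\subseteq\Lambda^\bullet(V_k)\subseteq A$. Indeed $\mathfrak m$ is exactly the ideal of $A$ generated by the products $v_1v_2$ with $v_i\in V_i$, whence $A/\mathfrak m\cong\Lambda^\bullet(V_1)\sqcap\Lambda^\bullet(V_2)$; the map $\psi_G^\bullet$ factors through this quotient as $\psi_{G_1}^\bullet\sqcap\psi_{G_2}^\bullet$, and the kernel of the latter pulls back along $A\twoheadrightarrow A/\mathfrak m$ to $(I_1)+(I_2)+\mathfrak m$.

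Next, viewing $A=\Lambda^\bullet(V_1)\otimes\Lambda^\bullet(V_2)$ one has $(I_1)=I_1\otimes\Lambda^\bullet(V_2)$, $(I_2)=\Lambda^\bullet(V_1)\otimes I_2$, and $I_k\subseteq\Lambda^{\ge 2}(V_k)\subseteq(\Lambda^\bullet(V_k))_+$; hence
\begin{equation*}
  \mathfrak m\ \subseteq\ \mathfrak m+(I_1)\ \subseteq\ \mathfrak m+(I_1)+(I_2)=I
\end{equation*}
is a filtration of $I$ by $A$-submodules, with successive quotients $\mathfrak m$, then $(\mathfrak m+(I_1))/\mathfrak m\cong I_1\otimes^{-1}\FF_p$, then $I/(\mathfrak m+(I_1))\cong\FF_p\otimes^{-1}I_2$ (the last two because $I_k\otimes(\Lambda^\bullet(V_{3-k}))_+\subseteq\mathfrak m$). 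For each of the three quotients $Q$ I would compute $\rmH_{\ast,\ast}(A,Q)$ using Proposition~\ref{prop:CohomologyOfTensorProduct}, with three inputs: (i) by hypothesis $(\ker\psi_{G_k}^\bullet)(2)=I_k(2)$ is Koszul, so $\rmH_{a,\ast}(\Lambda^\bullet(V_k),I_k)$ is concentrated in internal degree $a+2$; (ii) $\Lambda^\bullet(V_k)$ is Koszul, so $\rmH_{b,\ast}(\Lambda^\bullet(V_k),\FF_p)$ is concentrated in internal degree $b$; (iii) the long exact sequence of $0\to(\Lambda^\bullet(V_k))_+\to\Lambda^\bullet(V_k)\to\FF_p\to 0$, together with $\rmH_{i}(\Lambda^\bullet(V_k),\Lambda^\bullet(V_k))=0$ for $i\ge 1$, shows $\rmH_{a,\ast}(\Lambda^\bullet(V_k),(\Lambda^\bullet(V_k))_+)$ is concentrated in internal degree $a+1$. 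Plugging (i)--(iii) into Proposition~\ref{prop:CohomologyOfTensorProduct} gives, in all three cases, $\rmH_{n,j}(A,Q)=0$ for $j\ne n+2$.

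Finally, applying the long exact homology sequence in $\rmH_\ast(A,-)$ to the two short exact sequences of the filtration propagates this vanishing: if $\rmH_{n,j}$ of both the submodule and the quotient vanish whenever $j\ne n+2$, then so does $\rmH_{n,j}$ of the middle term. Hence $\rmH_{n,j}(A,I)=0$ for $j\ne n+2$, equivalently $\rmH_{n,j}(\Lambda^\bullet(V),(\ker\psi_G^\bullet)(2))=0$ for $j\ne n$; since $(\ker\psi_G^\bullet)(2)$ is concentrated in non-negative degrees, it is a Koszul $\Lambda^\bullet(V)$-module. The main obstacle, I expect, will be the first step: establishing the coproduct description $B\cong B_1\sqcap B_2$ and the ensuing identity $I=(I_1)+(I_2)+\mathfrak m$ for the relation ideal, which genuinely uses $\rmH^\bullet$-quadraticity and the compatibility of the maps $\psi_{G_k}^\bullet$ with the canonical morphisms relating $G$ to $G_1$ and $G_2$; the rest is routine bookkeeping with Proposition~\ref{prop:CohomologyOfTensorProduct}.
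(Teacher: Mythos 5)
Your argument is correct, and it reaches the conclusion by a genuinely different decomposition than the paper's. The paper never touches the relation ideal directly: it uses the Mayer--Vietoris-type short exact sequence of $\Lambda$-modules $0\to \rmH^\bullet(G)\to \rmH^\bullet(G_1)\oplus\rmH^\bullet(G_2)\to\FF_p\to 0$ (citing \cite{NSW2008}*{Theorem 4.1.4}), notes that the hypothesis on $I_k(2)$ forces $\rmH_j(\Lambda_k,\rmH^\bullet(G_k))$ to be concentrated in internal degree $j+1$ for $j>0$, feeds $\rmH^\bullet(G_k)\cong \rmH^\bullet(G_k)\otimes^{-1}\FF_p$ into Proposition~\ref{prop:CohomologyOfTensorProduct}, and then transfers the resulting vanishing back to $\ker\psi_G^\bullet$ through $0\to\ker\psi_G^\bullet\to\Lambda\to\rmH^\bullet(G)\to 0$. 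You instead filter $\ker\psi_G^\bullet$ itself as $\mathfrak m\subseteq\mathfrak m+(I_1)\subseteq I$ and apply the same K\"unneth formula to the three graded pieces $(\Lambda_1)_+\otimes^{-1}(\Lambda_2)_+$, $I_1\otimes^{-1}\FF_p$, $\FF_p\otimes^{-1}I_2$. The computational engine is identical in both cases, and your degree bookkeeping checks out (each piece has $\rmH_{n,\ast}$ concentrated in internal degree $n+2$, and the long exact sequences propagate this). What your route costs is the preliminary identification $B\cong B_1\sqcap B_2$ together with $I=(I_1)+(I_2)+\mathfrak m$, which requires the compatibility of $\psi_{G_k}^\bullet$ with $\psi_G^\bullet$ and the vanishing of cross cup products --- the paper sidesteps all of this by quoting the cohomology sequence for free pro-$p$ products. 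What it buys is an explicit presentation of the relation ideal of $\rmH^\bullet(G_1*_pG_2)$, which is of some independent interest. One small point worth making explicit if you write this up: the intersections $(I_1)\cap\mathfrak m=I_1\otimes(\Lambda_2)_+$ and $(I_2)\cap(\mathfrak m+(I_1))=(\Lambda_1)_+\otimes I_2$ should be verified (they follow from $I_k\subseteq(\Lambda_k)_+$ and $\mathfrak m+(I_1)\subseteq(\Lambda_1)_+\otimes\Lambda_2$), since they are exactly what makes the successive quotients take the product form to which Proposition~\ref{prop:CohomologyOfTensorProduct} applies.
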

\begin{proof}
    For abbreviation, we set $\Lambda:=\Lambda^\bullet(\rmH_1(G,\FF_p))$, $\rmH^\bullet(G_k):=\rmH^\bullet(G_k,\FF_p)$, and $\Lambda_k:=\Lambda^\bullet(\rmH^1(G_k))$ for $k=1,2$. Then we have $\Lambda=\Lambda_1\otimes^{-1}\Lambda_2$ by Construction~\ref{con:TensorProductOfAlgebras}. Using the exact sequence
    \begin{equation*}
        \begin{tikzcd}[row sep=small]
            0\arrow[r]&\ker \psi_{G_i}^\bullet \arrow[r]& \Lambda_i\arrow[r]& \rmH^\bullet(G_i)\arrow[r]&0
        \end{tikzcd}
    \end{equation*}
    we see that the Koszulity of $(\ker \psi_{G_i}^\bullet)(2)$ implies that $\rmH_j(\Lambda_1,\rmH_\bullet(G_i))$ is concentrated in degree $j+1$ for all $j>0$. 

    By \cite[Theorem 4.1.4]{NSW2008}, we get a short exact sequence of $\Lambda$-modules:
    \begin{equation*}
        \begin{tikzcd}
            0\arrow[r]&\rmH^\bullet(G_1*_p G_2)\arrow[r]&\rmH^\bullet(G_1)\oplus \rmH^\bullet(G_2) \arrow[r]&\FF_p \arrow[r]&0
        \end{tikzcd}
    \end{equation*}
    Because $\FF_p$ is a Koszul $\Lambda$-module, the long exact sequence for $\rmH_*(\Lambda,\bl)$ yields isomorphisms for $k>j+1$.
    \begin{align*}
        \rmH_{j,k}(\Lambda,\rmH^\bullet(G_1*_pG_2))\cong \rmH_{j,k}(\Lambda,\rmH^\bullet(G_1))\oplus \rmH_{j,k}(\Lambda,\rmH^\bullet(G_2)).
    \end{align*}
    We show that each of these groups is zero for $j>0$, from which we conclude that $(\ker \psi^\bullet)(2)$ is Koszul.
    
    The $\Lambda$-modules $\rmH^\bullet(G_i)$ are isomorphic to $\rmH^\bullet(G_1)\otimes^{-1}\FF_p$ resp. $\FF_p\otimes^{-1}\rmH^\bullet(G_2)$. We only study the case for $i=1$, the other is analogous. We can apply Proposition~\ref{prop:CohomologyOfTensorProduct} and get
    \begin{align*}
        \rmH_j(\Lambda,\rmH^\bullet(G_1))\cong \bigoplus_{s+t=j}\rmH_s({\Lambda_1},\rmH^\bullet(G_1))\otimes \rmH_t({\Lambda_2},\FF_p)
    \end{align*}
    The vector space $\rmH_s({\Lambda_1},\rmH^\bullet(G_1))$ is concentrated in degree $s+1$ for $s>0$ and $0$ if $s=0$. The vector space $\rmH_t({\Lambda_2},\FF_p)$ is concentrated in degree $t$. Therefore, the graded vector space $\rmH_{j,k}(\Lambda,\rmH^\bullet(G_1))$ is zero for $k>j+1$, implying that $\rmH_{i,j}(\Lambda,\ker\psi_G^\bullet)=0$ for $j>i+2$ and therefore $(\ker \psi_G^\bullet)(2)$ is Koszul.
\end{proof}
\begin{prop}
\label{prop:SemiDirectKoszul}
    Let $(G_0,\theta_0)$ be a Kumerian torsion-free, oriented pro-$p$ group and $A$ be a finitely generated free abelian pro-$p$ group. Assume that $(\ker \psi_{G_0}^\bullet)(2)$ is a Koszul module over $\Lambda^\bullet(\rmH^1(G_0,\FF_p))$. Then the same is true for $(G,\theta):=(A\rtimes_{\theta_0}G,\theta_0\circ \pi)$.
\end{prop}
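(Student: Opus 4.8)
The plan is to split off from every relevant object a common tensor factor of $\Lambda_A:=\Lambda^\bullet(\rmH^1(A,\FF_p))$. Write $\Lambda_G:=\Lambda^\bullet(\rmH^1(G,\FF_p))$, $\Lambda_0:=\Lambda^\bullet(\rmH^1(G_0,\FF_p))$ and $B_0:=\rmH^\bullet(G_0,\FF_p)$. I will produce an identification $\rmH_{i,j}(\Lambda_G,(\ker\psi_G^\bullet)(2))\cong\rmH_{i,j}(\Lambda_0,(\ker\psi_{G_0}^\bullet)(2))$ via Proposition~\ref{prop:CohomologyOfTensorProduct}, after which the hypothesis that $(\ker\psi_{G_0}^\bullet)(2)$ is Koszul finishes the proof. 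This is the semidirect-product analogue of the argument in Proposition~\ref{prop:FreeProductKoszul}. (Note $\theta=\theta_0\circ\pi$ has $\mathrm{im}\,\theta=\mathrm{im}\,\theta_0$, so $(G,\theta)$ is again torsion-free, and it is again Kummerian, cf.~\cite{QuadrelliWeigel2022}, so all objects above are well-behaved.)

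\emph{The cohomology decomposes.} First I would check that $G_0$ acts trivially on $\rmH^\bullet(A,\FF_p)$: since $\theta_0$ is torsion-free, $\theta_0(G_0)\subseteq 1+p\ZZ_p$ for odd $p$ and $\theta_0(G_0)\subseteq 1+4\ZZ_2$ for $p=2$, and $G_0$ acts on $A\cong\ZZ_p^n$ by multiplication by $\theta_0$, so it acts trivially on $A/pA$, hence on $\rmH^\bullet(A,\FF_p)\cong\Lambda_A$. The extension $1\to A\to G\to G_0\to 1$ is split, so inflation $B_0\hookrightarrow\rmH^\bullet(G,\FF_p)$ is injective and, by the five-term exact sequence, the transgression $\rmH^1(A,\FF_p)\to\rmH^2(G_0,\FF_p)$ vanishes. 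In the Lyndon--Hochschild--Serre spectral sequence $E_2^{s,t}=\rmH^s(G_0,\FF_p)\otimes\Lambda_A^t\Rightarrow\rmH^{s+t}(G,\FF_p)$ every $d_r$ is a derivation; it vanishes on the bottom row $E_r^{\bullet,0}$ (the target lies in a negative row) and on $E_r^{0,1}=\rmH^1(A,\FF_p)$ (by the vanishing transgression for $r=2$, by negativity for $r\geq 3$), and these generate $E_2$ as an algebra, so the spectral sequence degenerates at $E_2$. In particular $\rmH^1(G,\FF_p)=\rmH^1(G_0,\FF_p)\oplus\rmH^1(A,\FF_p)$, whence $\Lambda_G\cong\Lambda_0\otimes^{-1}\Lambda_A$ by Construction~\ref{con:TensorProductOfAlgebras}. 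The product on $E_\infty=E_2$ is exactly the one of that construction, so I would then check that the algebra map $\phi\colon B_0\otimes^{-1}\Lambda_A\to\rmH^\bullet(G,\FF_p)$, $\alpha\otimes^{-1}\beta\mapsto\inf_{G_0}^G(\alpha)\cdot\widetilde\beta$ (where $\widetilde{(-)}$ is the algebra map $\Lambda_A\to\rmH^\bullet(G,\FF_p)$ extending $\rmH^1(A,\FF_p)\hookrightarrow\rmH^1(G,\FF_p)$), is an isomorphism: it preserves the cohomological filtration and induces the identity on the associated graded $E_\infty=E_2$. Alternatively, this cohomology computation for the building block $A\rtimes_{\theta_0}G_0$ can be quoted from~\cite{QuadrelliWeigel2022} or~\cite{MinacPasiniQuadrelliTan2022}.

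\emph{Identifying the kernel and concluding.} The projection $\pi\colon G\to G_0$ maps $K_\theta(G)$ into $K_{\theta_0}(G_0)$, hence $I_\theta(G)$ into $I_{\theta_0}(G_0)$, so it induces $G(\theta)\to G_0(\theta_0)$; chasing inflation maps shows that under the isomorphisms above $\psi_G^\bullet$ restricts on $\Lambda_0$ to $\inf_{G_0}^G\circ\psi_{G_0}^\bullet$ and on $\Lambda_A$ to $\widetilde{(-)}$, so that $\psi_G^\bullet=\phi\circ(\psi_{G_0}^\bullet\otimes^{-1}\mathrm{id}_{\Lambda_A})$. Since tensoring over $\FF_p$ with the vector space $\Lambda_A$ is exact, this gives an isomorphism of graded $\Lambda_G$-modules $\ker\psi_G^\bullet\cong(\ker\psi_{G_0}^\bullet)\otimes^{-1}\Lambda_A$, hence $(\ker\psi_G^\bullet)(2)\cong(\ker\psi_{G_0}^\bullet)(2)\otimes^{-1}\Lambda_A$. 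Now $\rmH_j(\Lambda_A,\Lambda_A)$ is $\FF_p$ concentrated in bidegree $(0,0)$ for $j=0$ and $0$ for $j>0$, the module being free of rank one, so Proposition~\ref{prop:CohomologyOfTensorProduct} yields $\rmH_{i,j}(\Lambda_G,(\ker\psi_G^\bullet)(2))\cong\rmH_{i,j}(\Lambda_0,(\ker\psi_{G_0}^\bullet)(2))$ for all $i,j$, which vanishes when $i\neq j$ by hypothesis. Hence $(\ker\psi_G^\bullet)(2)$ is Koszul.

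\emph{The main obstacle.} The one non-formal point is upgrading the degeneration of the spectral sequence (an isomorphism of bigraded vector spaces) to a graded-algebra isomorphism compatible with $\psi_G^\bullet$; concretely, that the map $\widetilde{(-)}\colon\Lambda_A\to\rmH^\bullet(G,\FF_p)$ is well defined. This is visible only at $p=2$, where a degree-one class need not square to zero and one must choose, for $v\in\rmH^1(A,\FF_2)$, an extension $\widetilde v\in\rmH^1(G,\FF_2)$ with $\widetilde v\cup\widetilde v=0$. Here torsion-freeness is essential: $\theta_0(G_0)\subseteq 1+4\ZZ_2$ gives $[G,A]\subseteq 4A$, so $A/4A$ is a central quotient of $G$, and lifting $v$ to $\Hom(A/4A,\ZZ/4)$ and pulling back along $G\to A/4A$ produces a lift of $\widetilde v$ modulo $4$, forcing $\widetilde v\cup\widetilde v=\beta(\widetilde v)=0$. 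For $p$ odd this is automatic from graded-commutativity. Everything after the algebra decomposition is bookkeeping with the (co)homology of $\otimes^{-1}$, exactly as in Proposition~\ref{prop:FreeProductKoszul}.
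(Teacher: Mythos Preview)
Your proof is correct and follows essentially the same route as the paper: decompose both $\Lambda_G$ and $\rmH^\bullet(G,\FF_p)$ as $\otimes^{-1}$-products with a common $\Lambda_A$-factor, then apply Proposition~\ref{prop:CohomologyOfTensorProduct} together with $\rmH_j(\Lambda_A,\Lambda_A)=0$ for $j>0$ to transport the Koszulity of $(\ker\psi_{G_0}^\bullet)(2)$ to $(\ker\psi_G^\bullet)(2)$. The only differences are in packaging: the paper simply cites \cite[Proposition~5.8]{MinacPasiniQuadrelliTan2021} for the algebra isomorphism $\rmH^\bullet(G,\FF_p)\cong B_0\otimes^{-1}\Lambda_A$ (which you re-derive via the LHS spectral sequence, including the delicate $p=2$ point), and the paper computes $\rmH_k(\Lambda_G,\rmH^\bullet(G,\FF_p))$ and then passes to $\ker\psi_G^\bullet$ via the short exact sequence, whereas you identify $\ker\psi_G^\bullet\cong(\ker\psi_{G_0}^\bullet)\otimes^{-1}\Lambda_A$ directly---both are straightforward once the tensor decomposition is in hand.
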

\begin{proof}
    By \cite[Proposition 5.8]{MinacPasiniQuadrelliTan2021} we have $\rmH^\bullet(G,\FF_p)\cong \rmH^\bullet(G_0,\FF_p)\otimes^{-1}\Lambda^\bullet(V)$ for $V:=A/pA$. We get $\Lambda:=\Lambda^\bullet(\rmH^1(G,\FF_p))\cong \Lambda_0\otimes^{-1} \Lambda^\bullet(V)$. Again, by Proposition~\ref{prop:CohomologyOfTensorProduct} one concludes
    \begin{align*}
        \rmH_k(\Lambda,\rmH^\bullet(G,\FF_p))&\cong \rmH_k({\Lambda_0\otimes^{-1} \Lambda^\bullet(V)},\rmH^\bullet(G_0,\FF_p)\otimes^{-1}\Lambda^\bullet(V))\\
        &\cong \bigoplus_{s+t=k} \rmH_s(\Lambda_0,\rmH^\bullet(G_0,\FF_p))\otimes \rmH_t({\Lambda^\bullet(V)},\Lambda^\bullet(V))\\
        &\cong \rmH_k(\Lambda_0,\rmH^\bullet(G_0,\FF_p)).
    \end{align*}
    Thus also $\rmH_k(\Lambda_0,\ker \psi_{G_0}^\bullet)\cong \rmH_k(\Lambda,\ker\psi_G^\bullet)$, which implies the desired statement.
\end{proof}

Combining the propositions \ref{prop:DemushkinKoszul}, \ref{prop:FreeProductKoszul}, and \ref{prop:SemiDirectKoszul} yields the desired proof of Theorem~\ref{thm:ElementaryTypeKoszul}.

\begin{rem}
    We have even shown the validity of the Module Koszulity Conjecture~1 for more general fields, than the ones, whose maximal pro-$p$ Galois group is of elementary type, by not restricting to the finitely generated case in Proposition~\ref{prop:FreeProductKoszul} and \ref{prop:SemiDirectKoszul}. For example, if $(\KK,v)$ is a complete discretely valued field, such that the residue field satisfies the Module Koszulity Conjecture~1, then the same is true for $\KK$. This applies for example to $\LL(\!(t)\!)$. This yields a new proof of \cite{Positselski2014}*{Theorem 1 (2)}.
\end{rem}

%\tableofcontents

\subsection*{Acknowledgments} I would like to thank my PhD-advisor Thomas Weigel of the Università degli Studi di Milano-Bicocca for posing this problem to me and supporting me during my research. I owe the beautiful counter example in \ref{exmp:IdealNotQuadratic} to Simone Blumer and I am thankful for various helpful discussions with him, my PhD-advisor Christian Maire, and Claudio Quadrelli. 

I would like to thank an anonymous referee of an earlier version of this paper for his constructive suggestions, helping to improve the article.

I am a member of the Gruppo Nazionale per le Strutture Algebriche, Geometriche e le loro Applicazioni (GNSAGA), which is part of the Istituto Nazionale di Alta Matematica (INdAM). 

\bibliography{literature}

\end{document}